\theoremstyle{plain}
\newtheorem{theorem}{Theorem}[section]
\newtheorem{proposition}{Proposition}[section]
\newtheorem{corollary}{Corollary}[section]
\theoremstyle{remark}
\newtheorem{remark}{Remark}
\theoremstyle{definition}
\newtheorem{definition}{Definition}[section]
\title{Stable Chimera States: A Geometric Singular Perturbation Approach.}
\author[1]{Luis Guillermo Venegas-Pineda\thanks{l.g.venegas.pineda@rug.nl}}
\author[1]{Hildeberto Jardón-Kojakhmetov}
\author[2]{Ming Cao}
\affil[1]{\textit{Bernoulli Institute for Mathematics, Computer Science and Artificial Intelligence, University of Groningen, Nijenborgh 9, 9700 AK, Groningen, The Netherlands.}}
\affil[2]{\textit{Engineering and Technology Institute Groningen, University of Groningen, Nijenborgh 9, 9700 AE, Groningen, The Netherlands.}}
\date{}
\begin{document}
\maketitle
\linespread{1.5}
\doublespacing
\vspace{-2.0cm}
\section*{Abstract}
Over the past decades chimera states have attracted considerable attention given their unexpected symmetry-breaking spatio-temporal nature, simultaneously exhibiting synchronous and incoherent behaviours under specific conditions. Despite relevant precursory results of such unforeseen states for diverse physical and topological configurations, there remain structures and mechanisms yet to be unveiled. In this work, using mean-field techniques, we analyze a multilayer network composed by two populations of heterogeneous Kuramoto phase oscillators with coevolutive coupling strengths. Moreover, we employ Geometric Singular Perturbation Theory (GSPT) with the inclusion of a time-scale separation between the dynamics of the network elements and the adaptive coupling strength connecting them, gaining a better insight into the behaviour of the system from a fast-slow dynamics perspective. Consequently, we derive the necessary and sufficient condition to produce stable chimera states  when considering a co-evolutionary intercoupling strength. Additionally, under the aforementioned constraint and with a suitable adaptive law election, it is possible to generate intriguing patterns, such as persistent breathing chimera states. Thereafter, we analyze the geometric properties of the mean-field system with a co-evolutionary intracoupling strength and demonstrate the production of stable chimera states which depend on the associated network. Finally, relaxation oscillations and canard cycles, also related to breathing chimeras, are numerically produced under identified conditions due to the geometry of our system.

\section{Introduction}
\label{Sec:Introduction}
Synchronization is a crucial element in a large variety of natural and practical processes occurring in daily life \cite{Pikovsky2001, Boccaletti2002}. Particularly, synchronization arises in complex systems as neural networks \cite{Uhlhaas2006, Shafiei2020}, circadian rhythms \cite{Yamaguchi2003, Husse2015}, power transmission lines \cite{Rohden2012, Sajadi2022}, flashing patterns in fireflies \cite{Buck1968, Moiseff2010}, superconducting Josepshon junctions \cite{Wiesenfield1998}, chemical oscillations \cite{Taylor2009, Calugaru2020}, neutrino oscillations \cite{Pantaleone1998}, and extensive biological processes \cite{Bick2020, Hauser2022}. A remarkable contribution to this topic was introduced by the Japanese physicist Yoshiki Kuramoto who, inspired by the pioneering work of Arthur Winfree \cite{Winfree1967}, presented an archetypal model for the study of spontaneous synchronization which consists of a complete network of coupled oscillators with randomly distributed natural frequencies and a coupling kernel dependent on the sine of their phase difference \cite{Kuramoto1984}. 
\paragraph*{}Although identical oscillators were expected to only display complete synchronous or incoherent dynamics, Kuramoto and Battogtokh found that, under specific initial conditions, a ring of identical coupled oscillators can separate in two distinguishable spatial regions, one of almost completely synchronized elements whilst the other composed of partially incoherent oscillators \cite{Kuramoto2002, Omelchenko2018}. Later, Abrams and Strogatz named this new unexpected pattern as \textit{chimera state} due to their resemblance to the mythological beast composed of dissonant elements \cite{Abrams2004}. Since then, chimera states have been reported in a vast diversity of contexts, gathered in extensive review papers \cite{Panaggio2015, Scholl2016, Bera2017}, and various physical phenomena have been related to such behavior. For instance, in nature many animal species engage in unihemispheric slow-wave sleep in which a highly active region of the brain coexist with the other hemispheric performing in a more erratic manner \cite{Rattenborg2000}. Similarly, the simultaneous presence of highly synchronized and completely incoherent patterns have been reported in ventricular fibrillation \cite{Davidenko1992, Panfilov1998}, social and cultural trends \cite{Gonzalez2014}, and neurology models such as non-locally coupled Hodgkin-Huxley oscillators \cite{Sakaguchi2006}, leaky integrate-and-fire neurons \cite{Olmi2011}, as well as in coupled FitzHugh-Nagumo oscillators \cite{Omelchenko2013}, among several other neural network models \cite{Panaggio2015}.
\paragraph*{}In this paper, we present a novel mechanism for the generation of chimera states in a complete network composed by different populations of heterogeneous Kuramoto phase oscillators in which the strength of the coupling connecting the nodes adaptively evolves depending on the macroscopic state of the system. Such coevolutionary networks have been observed in several systems like vascular networks \cite{Martens2017}, opinion dynamics \cite{Zino2020, Zino2020-2}, power grids \cite{Berner2021}, and neural processes involved both in learning \cite{Gernster1996, Gernster2002} and the progression of specific neuro-degenerative diseases \cite{Goriely2020}. 
\paragraph*{}Particularly, enabled by our model, we employ the Ott-Antonsen ansatz to obtain a reduced order mean-field representation of the networks' dynamics. Moreover, by introducing a slow adaptive law we induce a fast-slow structure, allowing for the first time the identification of stable chimera states with the stable equilibria of a fast-slow system. \textcolor{black}{Additionally, by analyzing the geometric properties of the mean-field, characteristics leading to different synchronization patterns are determined in the reduced manifold, which then, by considering the normally hyperbolic case, are preserved in the network when assuming sufficiently heterogeneous populations in our ensemble with a slowly coevolutive coupling strength, given a suitable large time-scale separation between the dynamics off and on the network. Hence, we provide a novel mechanism that allows to design the desired synchronization pattern in the mean-field, which is then observed in the network under the adequate conditions.} In particular, we prove that stable chimera states and persistent breathing chimeras are achieved under a sufficiently slow coevolving law with specific parameter arrangements. Finally, other parameter regimes related to the loss of normal hyperbolicity conditions and leading to complex behavior on the reduced system are identified and compared numerically at the network level, generating exciting ideas of future work.
\paragraph*{}The rest of the paper is structured as follows. In section \ref{Sec:Model} we present our research model with all the necessary assumptions, from which we derive an equivalent mean field based on the Ott-Antonsen technique in order to analyze the geometric properties of the system from a reduced order representation. Later, in section \ref{Sec:Analysis}, we present the main results of our study for two distinct problems, the coevolutionary inter and intracoupling, respectively. For each scenario, we determine the geometric properties of the related mean field and give conditions for the effective generation of chimera states when considering adaptive regimes only dependent on macroscopic quantities to preserve the Ott-Antonsen method. Then, utilizing the attractiveness of the Ott-Antonsen manifold, we provide with arguments for the chimera state to be reflected in the finite-size network. Thereafter, in section \ref{Sec:NumericResults} we present numerical results as evidence of our research by comparing the dynamics of the mean field and the network for different macroscopic adaptive laws, \textcolor{black}{showing numerically the successful generation of the desired synchronization patterns by the incorporation of a coevolutive dynamics designed in the mean-field for the coupling strengths of the network.} Finally, in section \ref{Sec:Conclusions} we give our conclusions and discuss future research opportunities as the consequence of the outcomes presented. In particular, we emphasize the analysis of the non-hyperbolic case for which relaxation oscillations and canard solutions, related to breathing chimera states due to the patterns numerically observed, have been produced in simulations for both the mean field and the network. \textcolor{black}{Additionally, in appendix \ref{Sec:Preliminaries} we introduce the basic notions supporting our results, including an overview of the Ott-Antonsen ansatz as well as a brief summary on fast-slow dynamics for the interested reader.}

\section{Coevolutionary multilayer Kuramoto network}
\label{Sec:Model}
The system we analyze consists of a complete network composed by two populations of different heterogeneous Kuramoto oscillators intra and interconnected with common coevolutive coupling strengths, as graphically depicted in figure \ref{FullSystemDiagramLGVP} and mathematically expressed in the following set of coupled ordinary differential equations
\begin{eqnarray}
    \dot{\theta}_{i}^{\sigma} &&= \omega_{i}^{\sigma} + \frac{k_{\sigma}}{N_{\sigma}} \sum_{j=1}^{N_{\sigma}} \sin{\left(\theta_{j}^{\sigma} - \theta_{i}^{\sigma} - \beta_{\sigma \sigma}\right)}+ \frac{\mu}{N_{\sigma'}} \sum_{l=1}^{N_{\sigma'}} \sin{\left(\theta_{l}^{\sigma'} - \theta_{i}^{\sigma} - \beta_{\sigma \sigma'}\right)}\label{GeneralModelLGVP},\\
    \dot{\zeta}_{\sigma \sigma'} &&= \varepsilon g(R_{\sigma}, R_{\sigma'}, \zeta_{\sigma \sigma}, \zeta_{\sigma \sigma'}),\label{GenericAdaptiveLaw}
\end{eqnarray}
where $\theta_{i}^{\sigma}$ and $\omega_{i}^{\sigma}$ represent the phase and natural frequency of the $i$-th oscillator, chosen from a unimodal Cauchy-Lorentz distribution as \eqref{CauchyLorentzPDF}, while $N_{\sigma}$ denotes the number of elements in the $\sigma$-layer ($\sigma=1,2$). \textcolor{black}{Moreover, the parameters $\beta_{\sigma \sigma}$ and $\beta_{\sigma \sigma'}$ represent the phase-lag existing through elements between and across populations, respectively. For the purpose of our work, we consider networks without a phase-lag, i.e. $\beta_{\sigma \sigma'} = 0$, for $\sigma, \sigma' = 1,2$, as the necessary level of heterogeneity in the ensemble is already induced by the natural frequencies. Later, in remark \ref{Rem:PhaseLag}, we present a detailed explanation on why the assumption $\beta_{\sigma \sigma'} = 0$ is considered and its effect on our results. Nevertheless, we display some examples for which $\beta_{\sigma \sigma'}$ is greater than zero but sufficiently small, as it can be treated as a perturbation parameter in our analysis. Finally, in this work we concentrate only on one coevolutive coupling strength at the time, thus we denote an arbitraty coupling strength with the variable $\zeta_{\sigma \sigma'}$, such that the intracoupling strengths are expressed as $\zeta_{\sigma \sigma} = \zeta_{11} = k_{1}$ or $\zeta_{\sigma \sigma} = \zeta_{22} = k_{2}$, while the intercoupling strength is given by $\zeta_{\sigma \sigma'} = \zeta_{12} = \zeta_{21} = \mu$ in \eqref{GenericAdaptiveLaw}. Additionally, the coevolutive coupling strength $\zeta_{\sigma \sigma'}$ is a slowly adaptive variable which only depend on macroscopic quantities of \eqref{GeneralModelLGVP}}, simplifying the reduction method by considering the couplings as constant parameters in the fast time-scale and avoiding modifications on the mean-field technique employed \cite{Ciszak2020}. 
\begin{figure}[ht]
    \centering
    \includegraphics[width=0.6\textwidth]{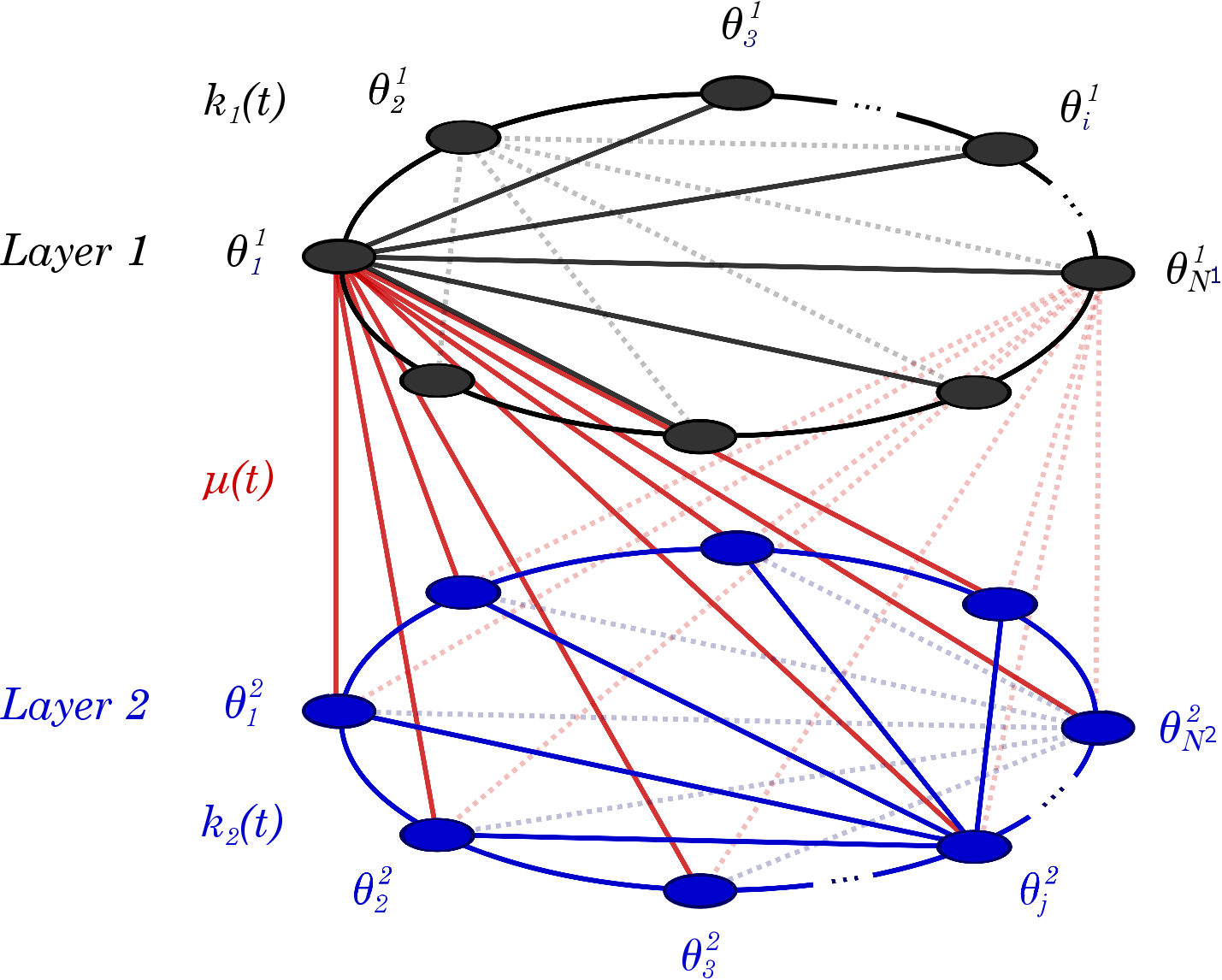}
    \caption{Graphic representation of the multilayer coevolutive network \eqref{GeneralModelLGVP}. The black and blue components represent elements corresponding to the first and second populations, respectively. Moreover, $\theta_{i}^{\sigma}$ denotes the phase of each element in the network, with $i=1,\dots, N_{\sigma}$ and $\sigma=1,2$. The adaptive intercoupling strength (red) is expressed as $\mu(t)$ and the intracoupling variable for each population is captured by $k_{\sigma}$. Although \eqref{GeneralModelLGVP} is a complete graph, we present only certain nodes' connections for simplicity.}
    \label{FullSystemDiagramLGVP}
\end{figure}
Thus, in accordance to \eqref{OttAntonsenRealMeanField}, \textcolor{black}{when $\beta_{\sigma \sigma'} = 0$, with  $\sigma, \sigma' = 1,2$}, a mean-field representation of \eqref{GeneralModelLGVP} is given by
\begin{align}
\begin{split}
    \dot{\rho}_{\sigma} &= -\Delta_{\sigma}\rho_{\sigma} + \frac{1}{2}\left(1-\rho_{\sigma}^{2}\right)\left(k_{\sigma}\rho_{\sigma} + \mu \rho_{\sigma'}\cos{(\phi_{\sigma'}-\phi_{\sigma})}\right),\\
    \dot{\phi}_{\sigma} &= -\omega_{\sigma} + \frac{1}{2}\mu\frac{\rho_{\sigma}^{2}+1}{\rho_{\sigma}}\rho_{\sigma'}\sin{(\phi_{\sigma'}-\phi_{\sigma})},\\
    \textcolor{black}{\dot{\zeta}_{\sigma \sigma'}} &= \varepsilon g(\rho_{\sigma}, \rho_{\sigma'}, \zeta_{\sigma \sigma'}).
    \label{OAReducedLGVP}
\end{split}
\end{align}
Furthermore, considering our two-population problem and by introducing the additional variables $\psi = \phi_{2}-\phi_{1}$, and $\Omega=\omega_{2}-\omega_{1}$, the mean-field \eqref{OAReducedLGVP} is expressed as
\begin{align}
\begin{split}
    \dot{\rho}_{1} &= -\Delta_{1}\rho_{1} + \frac{1}{2}\left(1-\rho_{1}^{2}\right)\left(k_{1}\rho_{1} + \mu \rho_{2}\cos{\psi}\right),\\
    \dot{\rho}_{2} &= -\Delta_{2}\rho_{2} + \frac{1}{2}\left(1-\rho_{2}^{2}\right)\left(k_{2}\rho_{2}+\mu \rho_{1}\cos{\psi}\right),\\
    \dot{\psi} &= -\Omega-\frac{1}{2}\mu\left(\frac{\rho_{1}^{2}+\rho_{2}^{2}+2\rho_{1}^{2}\rho_{2}^{2}}{\rho_{1}\rho_{2}}\right)\sin{\psi},\\
    \textcolor{black}{\dot{\zeta}_{\sigma \sigma'}} &=\varepsilon g\left( \rho_{\sigma}, \rho_{\sigma'}, \zeta_{\sigma \sigma}, \zeta_{\sigma \sigma'} \right).
    \label{FullReducedSystemLGVP}
\end{split}
\end{align}

\begin{remark}
    Observe that, since the selected probability distribution functions are of unimodal Cauchy-Lorentz type, the real part of the local Kuramoto order parameter $R_{\sigma}$ of \eqref{GeneralModelLGVP} directly corresponds to the variables $\rho_{\sigma}$ in \eqref{OAReducedLGVP}. 
\end{remark}
In the following section we analyze the behaviour of \eqref{FullReducedSystemLGVP} under the presence of different types of adaptive coupling strengths, only dependent on macroscopic quantites of \eqref{GeneralModelLGVP}. In particular, we benefit from the introduction of a time-scale separation between the dynamics on the nodes and that related to the coupling strength, allowing us the use of fast-slow theory without alterations on the reduction method employed.

\section{Analysis}
\label{Sec:Analysis}
To begin with and in order to facilitate computations, we reduce the dimension of the fast-time mean field \eqref{FullReducedSystemLGVP} by demonstrating that a near synchronization state is attracting when certain parameter conditions are fulfilled. The aforementioned idea is synthesized in the next proposition.
\begin{proposition}[Invariant and attracting synchronization set]
    \label{LemmaRho2}
        The set $\left\{\left( \rho_{1}, \rho_{2}, \psi, \mu \right)\in\mathbb{K} : \rho_{2}=1\right\}$ in \eqref{FullReducedSystemLGVP},  $\left(\mathbb{K}=[0,1]^{2} \times [0,2\pi) \times \mathbb{R}\right)$, is invariant and attracting if and only if $\Delta_{2} = 0$, for $k_{2}$ large enough. Moreover, there exists an invariant and attracting set $\left\{\left( \rho_{1}, \rho_{2}, \psi, \mu \right)\in\mathbb{K} : \rho_{2}(\Delta_{2})=1-a\Delta_{2}+O\left(\Delta_{2}^{2}\right)\right\}$ with $a>0$, and $\Delta_{2}$ positive and sufficiently small when $k_{2}>0$ is adequately large.
    \end{proposition}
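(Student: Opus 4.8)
The plan is to read the proposition as a statement about the persistence of a normally hyperbolic invariant manifold, using $\Delta_2$ as the perturbation parameter and a large intracoupling $k_2$ to supply the transverse contraction. Throughout I would work on a compact region $\mathbb{K}_0\subset\mathbb{K}$ on which $\mu$ is bounded and $\rho_1$ stays away from $0$ (so the $1/(\rho_1\rho_2)$ factor in $\dot\psi$ remains regular); this is legitimate since the adaptive law keeps $\mu$ in a bounded interval, and it is also what lets me appeal to the compactness hypothesis of Theorem \ref{FenichelTheorem}.

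First I would settle the invariance dichotomy. Substituting $\rho_2=1$ into the second line of \eqref{FullReducedSystemLGVP} kills the factor $(1-\rho_2^2)$ and leaves $\dot\rho_2\big|_{\rho_2=1}=-\Delta_2$. Hence the vector field is tangent to $\{\rho_2=1\}$ exactly when $\Delta_2=0$, which gives both the ``only if'' implication and the tangency required for ``if''. For attractiveness I would compute the single transverse derivative at $\Delta_2=0$,
\[
\frac{\partial\dot\rho_2}{\partial\rho_2}\bigg|_{\rho_2=1}=-\bigl(k_2+\mu\rho_1\cos\psi\bigr),
\]
which on $\mathbb{K}_0$ is strictly negative once $k_2>\sup_{\mathbb{K}_0}|\mu|$; equivalently $\dot\rho_2>0$ for $\rho_2$ just below $1$, so nearby trajectories are driven up to the boundary. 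This identifies $\{\rho_2=1\}$ as a normally hyperbolic, attracting, invariant manifold (one-dimensional stable normal bundle) and closes Part 1.

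For Part 2 I would invoke persistence of normally hyperbolic invariant manifolds under the smooth perturbation $\Delta_2\mapsto$ (vector field of \eqref{FullReducedSystemLGVP}), the geometric content underlying Theorem \ref{FenichelTheorem}: since the compact manifold $\{\rho_2=1\}\cap\mathbb{K}_0$ is normally hyperbolic and attracting at $\Delta_2=0$, for all sufficiently small $\Delta_2>0$ there is a nearby attracting locally invariant graph $\rho_2=h(\rho_1,\psi,\mu;\Delta_2)$ with $h\to1$ as $\Delta_2\to0$. To locate it to leading order I would use that the strong transverse contraction $-(k_2+\mu\rho_1\cos\psi)$ makes this manifold $O(1/k_2)$-close to the $\rho_2$-nullcline; inserting $\rho_2=1+\Delta_2 h_1+O(\Delta_2^2)$ into $\dot\rho_2=0$ and balancing at $O(\Delta_2)$ (using $1-\rho_2^2=-2\Delta_2 h_1+O(\Delta_2^2)$) yields $-1-h_1(k_2+\mu\rho_1\cos\psi)=0$, so that
\[
\rho_2(\Delta_2)=1-a\Delta_2+O(\Delta_2^2),\qquad a=\frac{1}{k_2+\mu\rho_1\cos\psi}>0,
\]
the positivity being precisely the inequality already used for attractiveness. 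Observing that $\dot\rho_2=-\Delta_2<0$ on $\{\rho_2=1\}$ confirms the perturbed manifold sits in the interior $\rho_2<1$, consistent with the downward shift.

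I expect the main obstacle to be the rigorous justification of Part 2 rather than the algebra. Three points need care: that the persistence theorem still applies although $\{\rho_2=1\}$ lies on $\partial\mathbb{K}$ (handled by the inward-pointing flow, which keeps the perturbed manifold inside $\mathbb{K}$ and makes ``invariant'' mean locally invariant in the sense of Theorem \ref{FenichelTheorem}); the non-compactness and the $1/\rho_1$ singularity (handled by the restriction to $\mathbb{K}_0$); and the fact that the exact invariant manifold solves a transport equation for $h_1$ rather than the algebraic nullcline equation, so that the clean coefficient $a=1/(k_2+\mu\rho_1\cos\psi)$ is only the leading term in $1/k_2$ — it is precisely the ``$k_2$ adequately large'' hypothesis that makes the slow-drift correction subdominant and keeps $a>0$ throughout $\mathbb{K}_0$.
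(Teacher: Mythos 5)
Your proposal is correct, and its computational core coincides with the paper's proof: both establish the invariance dichotomy from $\dot\rho_2\big\rvert_{\rho_2=1}=-\Delta_2$, both compute the transverse derivative $-\left(k_2+\mu\rho_1\cos\psi\right)$ at $(\rho_2,\Delta_2)=(1,0)$, and both obtain $a=\left(k_2+\mu\rho_1\cos\psi\right)^{-1}$ by balancing the $O(\Delta_2)$ terms. Where you genuinely diverge is in how the perturbed set in the second part is produced. The paper applies the implicit function theorem to $h_2(\rho_2,\Delta_2)=0$, i.e.\ it constructs a branch of the $\rho_2$-nullcline with $(\rho_1,\psi,\mu)$ treated as frozen, declares that set invariant, and argues attractiveness by continuity of $\partial h_2/\partial\rho_2$ as $\Delta_2\to0$. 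You instead invoke persistence of compact normally hyperbolic invariant manifolds under the regular perturbation in $\Delta_2$, obtaining a genuinely (locally) invariant attracting graph, and use the nullcline expansion only to locate it to leading order. Your route buys two things the paper's argument glosses over. First, the nullcline is not literally invariant: since $a$ depends on the evolving variables $(\rho_1,\psi,\mu)$, the exact invariant graph satisfies a transport equation whose first-order coefficient differs from the algebraic root; you flag this explicitly and correctly note that the discrepancy is subdominant in $1/k_2$, which is precisely what is needed to keep $a>0$ and to justify calling the set invariant. Second, your restriction to a compact region $\mathbb{K}_0$ with $\mu$ bounded and $\rho_1$ bounded away from zero is actually necessary: the paper asserts that $-\left(k_2+\mu\rho_1\cos\psi\right)$ is ``strictly negative in $\mathbb{K}$ for $k_2$ adequately large,'' yet $\mu$ ranges over all of $\mathbb{R}$ in $\mathbb{K}$, so no fixed $k_2$ achieves this without a tacit boundedness assumption on $\mu$ of the kind you impose. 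The price of your approach is heavier machinery (NHIM persistence, boundary and local-invariance caveats) where the paper's one-line IFT already yields the leading-order formula; the payoff is that the invariance and attractiveness claims are rigorously closed rather than inferred from the frozen-variable computation.
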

    \begin{proof}
       Consider the function
       \begin{equation}
           h_{2}(\rho_{2},\Delta_{2})\coloneqq-\Delta_{2}\rho_{2}+\frac{1}{2}\left( 1-\rho_{2}^{2} \right)\left( k_{2}\rho_{2} + \mu\rho_{1}\cos{\psi} \right),
           \label{FlowRho2}
       \end{equation}
       associated with the flow equation of $\rho_{2}$ in \eqref{FullReducedSystemLGVP}. Thus, the set $\left\{\left( \rho_{1}, \rho_{2}, \psi, \mu \right)\in\mathbb{K} : \rho_{2}=1\right\}$ is invariant if and only if $\Delta_{2}=0$. Moreover, by taking the derivative of \eqref{FlowRho2} with respect to $\rho_{2}$ and evaluating at $(\rho_{2}, \Delta_{2}) = (1,0)$ yields
       \begin{equation}\label{AttractingSetP2}
           \frac{\partial h_{2}}{\partial \rho_{2}}\bigg\rvert_{(1,0)} = -\left( k_{2} + \mu\rho_{1}\cos{\psi} \right),
       \end{equation} 
       which is strictly negative in $\mathbb{K}$ for $k_{2}$ adequately large. Therefore, from the implicit function theorem there exists a set $\{ \left(\rho_{1}, \rho_{2}, \psi, \mu \right)\in\mathbb{K}:\rho_{2} = q(\Delta_{2}) \}$  such that $h_{2}\left(q(\Delta_{2}), 
       \Delta_{2}\right)=0$, with $q(0)=1$, $q\in C^{1}$. In particular, we obtain an expansion of $\rho_{2}$ as a regular perturbation on $\Delta_{2}$ in the form $\rho_{2}(\Delta_{2})=1-a\Delta_{2}+O\left(\Delta_{2}^{2}\right)$ with the invariant flow of \eqref{FlowRho2} given now as
       \begin{equation}
           0 = \left( -1 + a k_{2} + a\mu\rho_{1}\cos{\psi} \right)\Delta_{2} + O\left(\Delta_{2}^{2}\right),
           \label{ExpansionRho2}
       \end{equation}
       which is satisfied for $a=\left( k_{2} + \mu\rho_{1}\cos{\psi} \right)^{-1}$, positive whenever $|k_{2}|\geq|\mu|$ along the interval $\rho_{1}\in(0,1)$. Moreover, the derivative of \eqref{FlowRho2} evaluated at $\rho_{2}\left( \Delta_{2} \right) = 1-a\Delta_{2}+ O\left( \Delta_{2}^{2} \right)$, given by
       \begin{eqnarray}
           \frac{\partial h_{2}}{\partial \rho_{2}}\bigg\rvert_{\rho_{2}=1-a\Delta_{2}+O\left(\Delta_{2}^{2}\right)} = -\Delta_{2} +\frac{1}{2}k_{2}\left( a\Delta_{2}(2-3a\Delta_{2})-2 \right)- \mu\rho_{1}(1-a\Delta_{2})\cos{\psi},
           \label{DerivativeExpansion}
       \end{eqnarray}
      approaches \eqref{AttractingSetP2} in the limit when $\Delta_{2}$ tends to zero, rendering the set $\{ \left(\rho_{1}, \rho_{2}, \psi, \mu \right)\in\mathbb{K}:\rho_{2}(\Delta_{2}) = 1-a\Delta_{2}+O(\Delta_{2}^{2})\}$ attracting with $\Delta_{2}$ sufficiently small for $k_{2}$ sufficiently large.
    \end{proof}
    Therefore, from proposition \ref{LemmaRho2} and by choosing $k_{2}$ sufficiently large and $\Delta_{2}$ sufficiently small, in what follows we simply consider $\rho_{2}=1$. Moreover, we analyze two different problems, namely, the co-evolutionary intercoupling, i.e., $\mu$ adaptive, and the coevolutive intracoupling, i.e., $k_{1}$ adaptive. This distinction is made in order to compare the geometries of both adaptation schemes and their isolated effects in the network. Notice that stable chimera states can be generated in both coevolving scenarios with determined geometric differences, as we subsequently demonstrate. To begin with, in the next subsection we study the adaptive intercoupling problem in \eqref{FullReducedSystemLGVP} while considering the intracouplings as constant parameters.

\subsection{Co-evolutionary Intercoupling}
    As a first approach, the coevolving intercoupling strength $\mu$ effects are analyzed. Therefore, by fixing $\rho_{2}=1$ and $k_1, k_2\in\mathbb R$, with $k_{2}$ sufficiently large, as parameters in both populations of \eqref{GeneralModelLGVP}, the mean-field \eqref{FullReducedSystemLGVP} is
     \begin{align}
        \begin{split}
            \dot{\rho}_{1} &= -\Delta_{1}\rho_{1} + \frac{1}{2}\left(1-\rho_{1}^{2}\right)\left(k_{1}\rho_{1} + \mu \cos{\psi}\right),\\
            \dot{\psi} &= -\Omega - \frac{1}{2}\mu\left(\frac{3\rho_{1}^{2}+1}{\rho_{1}}\right)\sin{\psi},\\
            \dot{\mu} &= \varepsilon_{\mu}f\left( \rho_{1}, \psi, \mu, t \right),      \label{CoevolutiveIntercouplingReducedLGVP}
        \end{split}
    \end{align}
    where $f(\rho_{1}, \psi, \mu, t)$ is some adaptive law which only depends on macroscopic quantities of \eqref{GeneralModelLGVP}, with $\varepsilon_{\mu}$ as the time-scale separation parameter. Consequently, for the critical manifold of \eqref{CoevolutiveIntercouplingReducedLGVP} we obtain
    \begin{align*}
    \begin{split}
        \mu \cos{\psi} &= \rho_{1}\left( \dfrac{2\Delta_{1}}{1-\rho_{1}^{2}} - k_{1} \right),\\
        \mu \sin{\psi} &= -2\Omega\left( \frac{\rho_{1}}{3\rho_{1}^{2}+1} \right).
    \end{split}
    \end{align*}
Hence, the critical manifold consists of two branches defined by
\begin{eqnarray} \mathcal{C}_{0}^{\pm} = \Bigg\{ \left( \rho_{1}, \psi, \mu \right) \in \mathbb{L} :
    \mu = \pm \rho_{1}\sqrt{\bigg( \frac{2\Delta_{1}}{1-\rho_{1}^{2}} - k_{1} \bigg)^{2} + \bigg( \frac{2\Omega}{3\rho_{1}^{2}+1} \bigg)^{2}} \Bigg\},
    \label{CoevolutiveInterCriticalManifold}
\end{eqnarray}
with $\mathbb{L}=  \left[0,1\right)\times\left[0,2\pi\right)\times \mathbb{R}$. Moreover, the Jacobian of \eqref{CoevolutiveIntercouplingReducedLGVP} evaluated at \eqref{CoevolutiveInterCriticalManifold} is

\begin{equation}
    J_{f}\Big\rvert_{\mathcal{C}_{0}^{\pm}} = \begin{bmatrix}
    -\Delta_{1}\left( \frac{1+\rho_{1}^{2}}{1-\rho_{1}^{2}} \right)+\frac{1}{2}k_{1}\left(1-\rho_{1}^{2}\right) & \Omega\rho_{1}\left(\frac{1-\rho_{1}^{2}}{1+3\rho_{1}^{2}} \right)\\
    \frac{\Omega}{\rho_{1}}\left( \frac{3\rho_{1}^{2}-1}{3\rho_{1}^{2}+1} \right) & -\frac{1}{2}\left(\frac{1+3\rho_{1}^{2}}{1-\rho_{1}^{2}}\right)\left( 2\Delta_{1}-k_{1}(1-\rho_{1}^{2}) \right)
    \end{bmatrix}.
    \label{JacobianMatrixCriticalManifold}
\end{equation}

Different results are obtained when considering if the distributions from which the natural frequencies are drawn from are centered at the same value or not, i.e., $\Omega=0$ or $\Omega\neq0$, respectively. We start by analyzing the former case in the following proposition.
\begin{proposition}[Intercoupling normal hyperbolicity and stability ($|\Omega|\ll1$)]
    \label{PropositionInterOmega=0}
    The critical manifold \eqref{CoevolutiveInterCriticalManifold} of \eqref{CoevolutiveIntercouplingReducedLGVP}, with $\Omega=0$, is normally hyperbolic in the interval $\rho_{1}\in[0,1)$ if and only if $k_{1}<2\Delta_{1}$. Moreover, if the critical manifold is normally hyperbolic then it is attracting for every $\rho_{1}\in[0,1)$. Finally, for $|\Omega|\ll1$, the normal hyperbolicity and stability of the $\Omega=0$ case are conserved. 
\end{proposition}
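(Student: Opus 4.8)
The plan is to read off normal hyperbolicity and stability directly from the spectrum of the fast Jacobian \eqref{JacobianMatrixCriticalManifold} restricted to $\mathcal{C}_{0}^{\pm}$, following Definition \ref{DefinitionNormalHyperbolicity}. The crucial simplification is that at $\Omega=0$ the off-diagonal entries of \eqref{JacobianMatrixCriticalManifold} vanish, so the matrix is diagonal and its eigenvalues are exactly the two diagonal entries,
\begin{align*}
    \lambda_{1}(\rho_{1}) &= -\Delta_{1}\frac{1+\rho_{1}^{2}}{1-\rho_{1}^{2}} + \tfrac{1}{2} k_{1}(1-\rho_{1}^{2}), \\
    \lambda_{2}(\rho_{1}) &= -\tfrac{1}{2}\frac{1+3\rho_{1}^{2}}{1-\rho_{1}^{2}}\bigl(2\Delta_{1} - k_{1}(1-\rho_{1}^{2})\bigr).
\end{align*}
Everything then reduces to a sign analysis of $\lambda_{1}$ and $\lambda_{2}$ on $\rho_{1}\in[0,1)$.

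First I would handle $\lambda_{2}$, since it governs the loss of hyperbolicity. The prefactor $\tfrac{1}{2}\frac{1+3\rho_{1}^{2}}{1-\rho_{1}^{2}}$ is strictly positive on $[0,1)$, so the sign of $\lambda_{2}$ is opposite to that of $g(\rho_{1}):=2\Delta_{1}-k_{1}(1-\rho_{1}^{2})$. Treating $g$ as an affine function of $\rho_{1}^{2}\in[0,1)$ — with $g(0)=2\Delta_{1}-k_{1}$ and $g\to 2\Delta_{1}>0$ as $\rho_{1}\to1$ — I would show that $g>0$ throughout $[0,1)$ precisely when $k_{1}<2\Delta_{1}$, whereas for $k_{1}\geq 2\Delta_{1}$ there is a point (namely $\rho_{1}=0$ at equality, and $\rho_{1}^{*}=\sqrt{1-2\Delta_{1}/k_{1}}\in(0,1)$ for $k_{1}>2\Delta_{1}$) at which $g$, and hence $\lambda_{2}$, vanishes. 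This yields the claimed equivalence: $\mathcal{C}_{0}^{\pm}$ is normally hyperbolic on the whole interval if and only if $k_{1}<2\Delta_{1}$. For the stability claim I would then show $\lambda_{1}<0$ in the same regime: clearing the positive denominator reduces $\lambda_{1}<0$ to the polynomial inequality $k_{1}(1-\rho_{1}^{2})^{2}<2\Delta_{1}(1+\rho_{1}^{2})$, which follows from $k_{1}<2\Delta_{1}$ by bounding $(1-\rho_{1}^{2})^{2}\leq 1\leq 1+\rho_{1}^{2}$ (the case $k_{1}\leq0$ being immediate). With $\lambda_{1},\lambda_{2}<0$, the normally hyperbolic manifold is attracting.

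For the perturbation statement $|\Omega|\ll1$ I would argue via the trace and determinant of \eqref{JacobianMatrixCriticalManifold} together with the Routh--Hurwitz criterion for $2\times2$ matrices, rather than tracking eigenvalues directly. The diagonal entries do not depend on $\Omega$, so the trace equals $\lambda_{1}+\lambda_{2}<0$ unchanged; the determinant is $\lambda_{1}\lambda_{2}-bc$, where the off-diagonal product $bc=\Omega^{2}\frac{(1-\rho_{1}^{2})(3\rho_{1}^{2}-1)}{(1+3\rho_{1}^{2})^{2}}$ is $O(\Omega^{2})$ and — crucially — extends continuously to $\rho_{1}=0$ even though the lower-left entry alone blows up there. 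Since $\lambda_{1}\lambda_{2}$ is continuous and strictly positive on $[0,1)$, with a positive value at $\rho_{1}=0$ and limit $+\infty$ as $\rho_{1}\to1$, it admits a uniform positive lower bound $c_{0}>0$; hence for $|\Omega|$ small enough the determinant stays $\geq c_{0}-C\Omega^{2}>0$ throughout $[0,1)$. Trace $<0$ together with determinant $>0$ forces both eigenvalues to have strictly negative real part, so both normal hyperbolicity and attractivity persist.

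The main obstacle I anticipate is making this last argument \emph{uniform} over the non-compact interval $[0,1)$: one must simultaneously control the blow-up of the diagonal entries (hence of $\lambda_{1}\lambda_{2}$) as $\rho_{1}\to1$ and the blow-up of the individual lower-left entry as $\rho_{1}\to0$, and verify that only their finite product $bc$ enters the characteristic polynomial. Establishing the uniform positive lower bound on $\lambda_{1}\lambda_{2}$ and the uniform $O(\Omega^{2})$ bound on $bc$ across the full range — so that a single smallness threshold on $|\Omega|$ works everywhere — is the delicate step; the remainder is elementary sign-checking.
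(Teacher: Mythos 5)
Your proposal is correct, and for the core $\Omega=0$ case it follows essentially the paper's own route: both proofs exploit that the Jacobian \eqref{JacobianMatrixCriticalManifold} becomes diagonal when $\Omega=0$, read off the two real eigenvalues, and reduce everything to a sign analysis on $\rho_{1}\in[0,1)$, obtaining normal hyperbolicity iff $k_{1}<2\Delta_{1}$ and attractivity in that regime. (Your justification of $\lambda_{1}<0$ via $k_{1}(1-\rho_{1}^{2})^{2}<2\Delta_{1}(1+\rho_{1}^{2})$, using $(1-\rho_{1}^{2})^{2}\leq 1\leq 1+\rho_{1}^{2}$ and treating $k_{1}\leq0$ separately, is actually cleaner than the paper's display, which implicitly divides by $k_{1}$ and so tacitly assumes $k_{1}>0$.)

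Where you genuinely diverge is the persistence claim for $|\Omega|\ll1$, and there your argument is stronger than the paper's. The paper dispatches it in one sentence: the eigenvalues of \eqref{JacobianMatrixCriticalManifold} depend differentiably on $\Omega$, so negativity persists. That is a pointwise-in-$\rho_{1}$ argument, and it glosses over two uniformity issues on the non-compact interval $[0,1)$: the admissible size of $|\Omega|$ could a priori shrink as $\rho_{1}\to1$ (where the diagonal entries blow up), and near $\rho_{1}=0$ the lower-left entry $\frac{\Omega}{\rho_{1}}\bigl(\frac{3\rho_{1}^{2}-1}{3\rho_{1}^{2}+1}\bigr)$ is not a small perturbation for any fixed $\Omega\neq0$. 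Your Routh--Hurwitz route addresses precisely this: the trace is $\Omega$-independent and negative, only the bounded product $bc=\Omega^{2}\frac{(1-\rho_{1}^{2})(3\rho_{1}^{2}-1)}{(1+3\rho_{1}^{2})^{2}}$ enters the determinant, and $\lambda_{1}\lambda_{2}$ admits a uniform positive lower bound (it is continuous, positive, and tends to $+\infty$ as $\rho_{1}\to1$), so a single smallness threshold on $|\Omega|$ works across the whole interval. In short, the paper's citation of eigenvalue continuity buys brevity; your trace--determinant argument buys uniformity in $\rho_{1}$, which is what the statement actually requires. One cosmetic caveat applies to both proofs: for $\Omega\neq0$ the Jacobian entry itself is singular at $\rho_{1}=0$, so the perturbed claim is best phrased on $(0,1)$, or through the continuous extensions of trace and determinant as you note.
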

\begin{proof}
    Notice that, when $\Omega=0$, the Jacobian \eqref{JacobianMatrixCriticalManifold} reduces to 

\begin{equation}
    J_{f}\Big\rvert_{\mathcal{C}_{0}^{\pm}} = \begin{bmatrix}
    -\Delta_{1}\left( \frac{1+\rho_{1}^{2}}{1-\rho_{1}^{2}} \right)+\frac{1}{2}k_{1}\left(1-\rho_{1}^{2}\right) & 0\\
    0 & -\frac{1}{2}\left(\frac{1+3\rho_{1}^{2}}{1-\rho_{1}^{2}}\right)\left( 2\Delta_{1}-k_{1}(1-\rho_{1}^{2}) \right)
    \end{bmatrix}.
    \label{JacobianMatrixCriticalManifoldOmega=0}
\end{equation}

    Thus, the eigenvalues of \eqref{JacobianMatrixCriticalManifoldOmega=0} are given as
    \begin{align*}
        \lambda_{1} &= -\Delta_{1}\left( \frac{1+\rho_{1}^{2}}{1-\rho_{1}^{2}} \right) + \frac{1}{2}k_{1}\left(1-\rho_{1}^{2}\right),\\
        \lambda_{2} &= -\frac{1}{2}\left( \frac{1+3\rho_{1}^{2}}{1-\rho_{1}^{2}} \right)\left(2\Delta_{1}-k_{1}(1-\rho_{1}^{2})\right),
    \end{align*}
    which are purely real for all $\rho_{1}\in[0,1)$. Consequently, for normal hyperbolicity we require that
    \begin{align*}
        \lambda_{1} &\neq0 \Rightarrow 1>\frac{\left( 1-\rho_{1}^{2} \right)^{2}}{1+\rho_{1}^{2}}\neq\frac{2\Delta_{1}}{k_{1}},\\
        \lambda_{2} &\neq0 \Rightarrow 1>\left(1-\rho_{1}^{2}\right) \neq \frac{2\Delta_{1}}{k_{1}}.
    \end{align*}
    Therefore, the critical manifold \eqref{CoevolutiveInterCriticalManifold} is normally hyperbolic in the interval $\rho_{1}\in[0,1)$ if and only if $k_{1}<2\Delta_{1}$. Moreover, for stability notice that if $k_{1}<2\Delta_{1}$ then $\lambda_{1,2}<0$ for every $\rho_{1}\in[0,1)$. Finally, if now we consider $|\Omega|$ positive and suffiently small the normal hyperbolicity and attractiveness of the $\Omega=0$ case are persistent as the eigenvalues $\lambda_{1,2}$ of \eqref{JacobianMatrixCriticalManifold} depend differentiably on $\Omega$ \cite{Lax2007}.
\end{proof}
Hence, proposition \ref{PropositionInterOmega=0} states that as long as the natural frequencies of the first population are sufficiently sparse, every point in the critical manifold \eqref{CoevolutiveInterCriticalManifold} is normally hyperbolic and attracting in the interval $\rho_{1}\in(0,1)$, when the centers of the natural frequency distributions are sufficiently close. Observe that this conclusion matches with the global attractiveness condition of the Ott-Antonsen manifold discussed in the introduction of this work.
\paragraph*{}Next, we study the case when only the intracoupling of one population evolves on time while preserving constant both the other intracoupling and the intercoupling strength. For convenience, we analyze system \eqref{FullReducedSystemLGVP}, under proposition \ref{LemmaRho2}, for $k_{1}$ adaptive.

\begin{remark}
    It is important to highlight that the identification of two branches in \eqref{CoevolutiveInterCriticalManifold}, namely $\mathcal{C}_{0}^{+}$ and $\mathcal{C}_{0}^{-}$, is done for convenience of notation. Nevertheless, by definition \ref{CriticalManifoldDefinition}, every point in these branches conforms the critical manifold, regardless its location in a particular segment, as shown in figures \ref{Fig:InterFeedback} and \ref{Fig:IntraFeedback} for the coevolutionary inter and intracoupling, respectively.
\end{remark}

\subsection{Coevolutionary Intracoupling}
As a comparison to the adaptive intercoupling case of the previous section, we now analyze the effect of one coevolutive intracoupling. By letting $k_{1}$ adaptive and with the result in proposition \ref{LemmaRho2}, now \eqref{FullReducedSystemLGVP} reduces to
    \begin{align}
    \begin{split}
        \dot{\rho}_{1} &= -\Delta_{1}\rho_{1} + \frac{1}{2}\left(1-\rho_{1}^{2}\right)\left( k_{1}\rho_{1} + \mu\cos{\psi}\right),\\
        \dot{\psi} &= -\Omega-\frac{1}{2}\mu\left( \frac{3\rho_{1}^{2}+1}{\rho_{1}} \right)\sin{\psi},\\
        \dot{k}_{1} &= \varepsilon_{1}f\left( \rho_{1}, \psi, k_{1}, t \right),
        \label{CoevolutiveIntraReducedLGVP}
    \end{split}
    \end{align}
with the same variable interpretations as before and $k_{1}$ as the coevolutive coupling strength. Thus, the critical manifold of \eqref{CoevolutiveIntraReducedLGVP} is
\begin{eqnarray} \mathcal{C}_{0}^{\pm} = \Bigg\{ \left( \rho_{1}, \psi, k_{1} \right) \in \mathbb{L} :
    k_{1} = \frac{2\Delta_{1}}{1-\rho_{1}^{2}}\pm \sqrt{\bigg(\frac{\mu}{\rho_{1}}\bigg)^{2} - \bigg
    (\frac{2\Omega}{3\rho_{1}^{2}+1}  \bigg)^{2}} \Bigg\}.
    \label{CoevolutiveIntraCriticalManifold}
\end{eqnarray}
\begin{remark}
    Notice that \eqref{CoevolutiveIntraCriticalManifold} can have complex elements, producing disconnected branches of $\mathcal{C}_{0}^{\pm}$. In fact, by definition \ref{CriticalManifoldDefinition} the critical manifold is a geometric object of real components. Therefore, we restrict our attention to the case for which the discriminant in \eqref{CoevolutiveIntraCriticalManifold} is non-negative.
\end{remark}
\begin{proposition}[Connected critical manifold]\label{RealConnectedBranches}
    The critical set \eqref{CoevolutiveIntraCriticalManifold} of \eqref{CoevolutiveIntraReducedLGVP} is completely connected if and only if $|\mu| \geq |\Omega|/\sqrt{3}$.
\end{proposition}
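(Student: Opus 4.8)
The plan is to translate the phrase ``completely connected'' into a single scalar condition on the radicand appearing in \eqref{CoevolutiveIntraCriticalManifold} and then reduce the resulting \emph{uniform} inequality to a one-variable optimization problem. Following the remark preceding the proposition, a point of $\mathcal{C}_0^\pm$ is a genuine (real) point only where the discriminant
\[
D(\rho_1) = \left(\frac{\mu}{\rho_1}\right)^2 - \left(\frac{2\Omega}{3\rho_1^2+1}\right)^2
\]
is non-negative; wherever $D(\rho_1)<0$ the branches take complex $k_1$-values, an entire sub-interval of $\rho_1$ is excised, and $\mathcal{C}_0^\pm$ splits into disjoint pieces. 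Accordingly I read ``completely connected'' as the statement that $D(\rho_1)\ge 0$ holds for \emph{every} $\rho_1\in[0,1)$, so that the branches are real throughout the admissible range with no removed sub-interval. The proposition then amounts to showing $D\ge 0$ on all of $[0,1)$ if and only if $|\mu|\ge|\Omega|/\sqrt3$.

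First I would dispose of the degenerate case $\Omega=0$, where $D(\rho_1)=\mu^2/\rho_1^2\ge0$ automatically and the claimed bound $|\mu|\ge 0$ holds trivially. For $\Omega\neq0$ and $\rho_1\in(0,1)$ I would clear denominators: since $\rho_1^2(3\rho_1^2+1)^2>0$, the sign condition $D(\rho_1)\ge0$ is equivalent to $\mu^2(3\rho_1^2+1)^2\ge 4\Omega^2\rho_1^2$, that is, to
\[
\frac{\mu^2}{4\Omega^2}\;\ge\;\frac{\rho_1^2}{(3\rho_1^2+1)^2}.
\]
Imposing this for all admissible $\rho_1$ is therefore equivalent to comparing $\mu^2/4\Omega^2$ against the supremum of the right-hand side over $[0,1)$.

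The key computational step is to maximize $G(\rho_1)=\rho_1^2/(3\rho_1^2+1)^2$ on $[0,1)$. Substituting $u=\rho_1^2\in[0,1)$ reduces this to maximizing $u/(3u+1)^2$, whose derivative $(1-3u)/(3u+1)^3$ vanishes only at $u=1/3$ and changes sign from positive to negative there, giving a unique interior maximum $G_{\max}=\tfrac{1/3}{(3\cdot\frac13+1)^2}=\tfrac{1}{12}$, attained at $\rho_1=1/\sqrt3\in(0,1)$. Hence $D(\rho_1)\ge0$ holds uniformly on $[0,1)$ if and only if $\mu^2/4\Omega^2\ge 1/12$, i.e.\ $\mu^2\ge\Omega^2/3$, which is exactly $|\mu|\ge|\Omega|/\sqrt3$.

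Finally I would close the equivalence in both directions and inspect the boundary behaviour. For the ``if'' direction, the uniform inequality gives $D(\rho_1)\ge0$ for every $\rho_1\in[0,1)$, so both branches are real throughout the admissible range with no excised sub-interval; in the borderline case $|\mu|=|\Omega|/\sqrt3$ they additionally touch at $\rho_1=1/\sqrt3$, where $D=0$. For the ``only if'' direction it suffices to evaluate at the maximizer: if $|\mu|<|\Omega|/\sqrt3$ then $D(1/\sqrt3)<0$, so a neighbourhood of $\rho_1=1/\sqrt3$ is removed and $\mathcal{C}_0^\pm$ fails to be completely connected. I expect the only genuinely delicate point to be pinning down the intended meaning of ``completely connected'' and confirming that the obstructing $\rho_1$-interval lies in the interior of $[0,1)$; this is automatic, since the maximizer $\rho_1=1/\sqrt3$ is an interior point, so the break is always detected within the admissible range rather than at its endpoints. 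The maximization itself is routine calculus and presents no real difficulty.
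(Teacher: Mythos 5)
Your proposal is correct and follows essentially the same route as the paper: both reduce ``completely connected'' to uniform non-negativity of the discriminant and then maximize a one-variable function of $\rho_1$, finding the maximizer $\rho_1 = 1/\sqrt{3}$ (your squared form $\rho_1^2/(3\rho_1^2+1)^2$ with maximum $1/12$ is just the square of the paper's $\rho_1/(3\rho_1^2+1)$ with maximum $1/(2\sqrt{3})$). Your treatment is in fact slightly more careful than the paper's, since you handle $\Omega=0$ inside the proof (the paper relegates it to a corollary, and its division by $2|\Omega|$ tacitly assumes $\Omega\neq 0$) and you argue both directions of the equivalence explicitly.
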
    
\begin{proof}
    In order to produce only real components we require that
    \begin{equation*}\label{ProofRealConnectedBranches}
        \left( \frac{\mu}{\rho_{1}} \right)^{2} - \left( \frac{2\Omega}{3\rho_{1}^{2}+1} \right)^{2} \geq 0,
    \end{equation*}
    which can be rewritten as
    \begin{equation}
    \label{ProofConnectedCritManifoldIntra}
        \frac{|\mu|}{2|\Omega|}\geq\frac{\rho_{1}}{3\rho_{1}^{2}+1}.
    \end{equation}
    By calculating the derivative of the right-hand side of  \eqref{ProofConnectedCritManifoldIntra} with respect to $\rho_{1}$, its maximum occurs at $\rho_{1}=1/\sqrt{3}$, for which the function reaches a value of $1/2\sqrt{3}$. Hence, the critical manifold \eqref{CoevolutiveIntraCriticalManifold} is completely connected if and only if $\left|\mu\right| \geq |\Omega|/\sqrt{3}$.
\end{proof}

\begin{corollary}
    When $\Omega=0$, the critical manifold \eqref{CoevolutiveIntraCriticalManifold} is completely connected for every $\mu\in\mathbb{R}$.
\end{corollary}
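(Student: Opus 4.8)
The plan is to obtain the corollary as an immediate specialization of Proposition \ref{RealConnectedBranches}, which already establishes the sharp threshold $|\mu| \ge |\Omega|/\sqrt{3}$ for complete connectivity of the critical set \eqref{CoevolutiveIntraCriticalManifold}. First I would simply set $\Omega = 0$ in that inequality: the right-hand side collapses to zero, so the connectivity condition reduces to $|\mu| \ge 0$. Since this is satisfied by every real $\mu$, the hypothesis of Proposition \ref{RealConnectedBranches} holds unconditionally, and complete connectivity follows at once for all $\mu \in \mathbb{R}$.

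To make the mechanism transparent rather than merely quoting the threshold, I would also argue directly from the discriminant under the square root in \eqref{CoevolutiveIntraCriticalManifold}. Upon setting $\Omega = 0$, the quantity
\[
    \left(\frac{\mu}{\rho_{1}}\right)^{2} - \left(\frac{2\Omega}{3\rho_{1}^{2}+1}\right)^{2}
\]
reduces to $(\mu/\rho_{1})^{2}$, which is non-negative for every $\rho_{1} \in (0,1)$ and every $\mu \in \mathbb{R}$. Hence \eqref{CoevolutiveIntraCriticalManifold} yields real values of $k_{1}$ across the whole admissible range of $\rho_{1}$, with no subinterval on which the branches $\mathcal{C}_{0}^{\pm}$ fail to exist; this is exactly the condition that characterizes complete connectivity.

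There is no genuine obstacle here, since the statement is a degenerate instance of the preceding proposition. The only point deserving a brief remark is the behaviour as $\rho_{1} \to 0^{+}$, where the factor $1/\rho_{1}$ is singular; however, this singularity is inherited from the $\dot{\psi}$ equation of \eqref{CoevolutiveIntraReducedLGVP} and is already present in the general setting of Proposition \ref{RealConnectedBranches}, so it contributes nothing new. For $\mu \neq 0$ the discriminant in fact remains strictly positive on $(0,1)$, while for $\mu = 0$ the critical set degenerates to the single branch $k_{1} = 2\Delta_{1}/(1-\rho_{1}^{2})$, which is trivially connected; either way the conclusion stands.
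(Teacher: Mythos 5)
Your proposal is correct and follows essentially the same route as the paper, which states this corollary without proof precisely because it is the immediate specialization $\Omega=0$ of Proposition \ref{RealConnectedBranches}: the threshold $|\mu|\geq|\Omega|/\sqrt{3}$ becomes $|\mu|\geq 0$, which holds for every real $\mu$. Your supplementary check via the discriminant $\left(\mu/\rho_{1}\right)^{2}\geq 0$ and the degenerate $\mu=0$ case is a fine clarification but adds nothing beyond what the proposition already guarantees.
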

Next, we obtain the Jacobian of \eqref{CoevolutiveIntraReducedLGVP} with respect to the fast variables, which by evaluation at \eqref{CoevolutiveIntraCriticalManifold} yields

\begin{equation}
    J_{f}\Big\rvert_{\mathcal{C}_{0}^{\pm}} = \begin{bmatrix}
        -2\Delta_{1}\left( \frac{\rho_{1}^{2}}{1-\rho_{1}^{2}}\right) \pm \frac{1}{2}\left(1-\rho_{1}^{2}\right) h\left(\rho_{1}\right) & \Omega\rho_{1}\left( \frac{1-\rho_{1}^{2}}{3\rho_{1}^{2}+1}\right)\\ \frac{\Omega}{\rho_{1}}\left( \frac{3\rho_{1}^{2}-1}{3\rho_{1}^{2}+1} \right) & \pm\frac{1}{2}(3\rho_{1}^{2}+1)h\left(\rho_{1}\right)
    \end{bmatrix},
    \label{JacobianMatrixCriticalManifoldIntra}
\end{equation}

with $h(\rho_{1})\coloneqq \sqrt{\left(\frac{\mu}{\rho_{1}}\right)^{2} - \left(\frac{2\Omega}{3\rho_{1}^{2}+1} \right)^{2}}$, non-negative under proposition \ref{RealConnectedBranches}. Thus, the following result summarize the conditions for the normal hyperbolicity and stability of \eqref{CoevolutiveIntraCriticalManifold}.
\begin{proposition}[Intracoupling normal hyperbolicity and stability $(|\Omega|\ll1)$]\label{PropIntraOmega0}
    The critical manifold \eqref{CoevolutiveIntraCriticalManifold} of \eqref{CoevolutiveIntraReducedLGVP} has one normally hyperbolic and attracting branch  $\left(\mathcal{C}_{0}^{-}\right)$, and one unstable branch $\left(\mathcal{C}_{0}^{+}\right)$ along the interval $\rho_{1}\in(0,1)$ for every $\mu\neq0$, with $\Omega=0$. Moreover, for $|\Omega|\ll1$, the normal hyperbolicity and stability conditions of the $\Omega=0$ case are preserved.
\end{proposition}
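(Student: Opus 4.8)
The plan is to exploit the fact that setting $\Omega=0$ diagonalizes the Jacobian \eqref{JacobianMatrixCriticalManifoldIntra}, so that the eigenvalues can be read off the diagonal and their signs determined on each branch; the perturbative claim then follows from the continuous dependence of the eigenvalues on $\Omega$. First I would substitute $\Omega=0$ into \eqref{JacobianMatrixCriticalManifoldIntra}: since every off-diagonal entry carries a factor of $\Omega$, the matrix becomes diagonal, and $h$ reduces to $h(\rho_{1})=|\mu|/\rho_{1}$, which is well-defined and strictly positive for $\rho_{1}\in(0,1)$ and $\mu\neq0$ (consistent with Proposition \ref{RealConnectedBranches}). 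The eigenvalues are then the diagonal entries, namely $\lambda_{1}^{\pm}=-2\Delta_{1}\rho_{1}^{2}/(1-\rho_{1}^{2})\pm\tfrac{1}{2}(1-\rho_{1}^{2})|\mu|/\rho_{1}$ and $\lambda_{2}^{\pm}=\pm\tfrac{1}{2}(3\rho_{1}^{2}+1)|\mu|/\rho_{1}$, where the sign selects the branch $\mathcal{C}_{0}^{\pm}$.

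Next comes the sign analysis. On $\mathcal{C}_{0}^{-}$, for $\Delta_{1}>0$, $\rho_{1}\in(0,1)$ and $\mu\neq0$, each summand of $\lambda_{1}^{-}$ is strictly negative and $\lambda_{2}^{-}<0$, so both eigenvalues are negative; hence $\mathcal{C}_{0}^{-}$ is normally hyperbolic and attracting in the sense of Definition \ref{DefinitionNormalHyperbolicity}. On $\mathcal{C}_{0}^{+}$ the decisive observation is that $\lambda_{2}^{+}=\tfrac{1}{2}(3\rho_{1}^{2}+1)|\mu|/\rho_{1}>0$ for \emph{every} admissible $\rho_{1}$, independently of the sign of $\lambda_{1}^{+}$; thus $\mathcal{C}_{0}^{+}$ always carries a positive eigenvalue and is therefore unstable (a source where $\lambda_{1}^{+}>0$ and a saddle where $\lambda_{1}^{+}<0$).

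For the perturbation in $\Omega$, I would argue that the entries of \eqref{JacobianMatrixCriticalManifoldIntra}, and hence the trace and determinant of $J_{f}|_{\mathcal{C}_{0}^{\pm}}$, depend smoothly on $\Omega$ wherever the discriminant defining $h$ is positive, which Proposition \ref{RealConnectedBranches} guarantees for $|\mu|\geq|\Omega|/\sqrt{3}$. At $\Omega=0$ the attracting branch satisfies $\textnormal{tr}<0$ and $\det>0$, while on the unstable branch the eigenvalue of largest real part, $\lambda_{2}^{+}$, is strictly positive. By continuous dependence of the eigenvalues on the matrix entries \cite{Lax2007}, these strict inequalities persist for $|\Omega|$ sufficiently small, so the normal hyperbolicity and stability type of each branch are preserved.

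The main obstacle will be the uniformity of the smallness condition on $|\Omega|$ as $\rho_{1}\to0^{+}$ and $\rho_{1}\to1^{-}$: near these endpoints the dominant eigenvalue and the off-diagonal entries blow up (for instance $\lambda_{2}^{\pm}\sim\pm|\mu|/(2\rho_{1})$ as $\rho_{1}\to0$, and $\lambda_{1}^{-}\to-\infty$ as $\rho_{1}\to1$), so the continuity argument must be phrased on compact subintervals $[\delta,1-\delta]$ rather than on all of $(0,1)$ simultaneously. In those regimes the dominant eigenvalue has large magnitude, so the $O(\Omega)$ and $O(\Omega^{2})$ corrections cannot flip its sign; the only genuinely delicate event is the interior crossing of $\lambda_{1}^{+}$ through zero, but this affects merely the normal hyperbolicity of the unstable branch and not the assertion that it is unstable, which rests solely on $\lambda_{2}^{+}>0$.
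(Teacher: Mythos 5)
Your proposal is correct and follows essentially the same route as the paper's proof: set $\Omega=0$ so that the Jacobian \eqref{JacobianMatrixCriticalManifoldIntra} becomes diagonal, read off $\lambda_{1,2}^{\pm}$, conclude that $\mathcal{C}_{0}^{-}$ is attracting while $\mathcal{C}_{0}^{+}$ is repelling or of saddle type, and invoke continuous (differentiable) dependence of the eigenvalues on $\Omega$ for persistence. Your additional observations---that instability of $\mathcal{C}_{0}^{+}$ rests solely on $\lambda_{2}^{+}>0$, and that the persistence argument should be made uniform on compact subintervals $[\delta,1-\delta]$---are refinements of, not departures from, the paper's argument.
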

\begin{proof}
    Observe that for $\Omega=0$ the Jacobian \eqref{JacobianMatrixCriticalManifoldIntra} reduces to
\begin{equation*}
    J_{f}\Big\rvert_{\mathcal{C}_{0}^{\pm}} = \begin{bmatrix}
        -2\Delta_{1}\left( \frac{\rho_{1}^{2}}{1-\rho_{1}^{2}}\right) \pm \frac{1}{2}\left(\frac{1-\rho_{1}^{2}}{\rho_{1}} \right) |\mu| & 0 \\ 0 & \pm \frac{1}{2}\left(\frac{3\rho_{1}^{2}+1}{\rho_{1}}\right)|\mu|
    \end{bmatrix},
    \label{JacobianMatrixCriticalManifoldIntraOmegaEqual}
\end{equation*}
with eigenvalues in the form
\begin{subequations}
    \begin{align*}
        \lambda_{1}^{\pm} &= -2\Delta_{1}\left( \frac{\rho_{1}^{2}}{1-\rho_{1}^{2}} \right) \pm \frac{1}{2}\left( \frac{1-\rho_{1}^{2}}{\rho_{1}} \right)\left| \mu \right|,\\
        \lambda_{2}^{\pm} &= \pm\frac{1}{2}\left( \frac{3\rho_{1}^{2}+1}{\rho_{1}} \right)\left| \mu \right|.
    \end{align*}
\end{subequations}
Hence, the branch of \eqref{CoevolutiveIntraCriticalManifold} corresponding to $\lambda_{1,2}^{-}$, namely $\mathcal{C}_{0}^{-}$, is normally hyperbolic and attracting along the interval $\rho_{1}\in(0,1)$ for every $\mu\neq0$. On the other hand, the branch $\mathcal{C}_{0}^{+}$ is either of saddle-type or repelling. Finally, for $|\Omega|\ll1$ the normal hyperbolicity and stability conditions of the $\Omega=0$ case persist since the eigenvalues $\lambda_{1,2}^{\pm}$ of \eqref{JacobianMatrixCriticalManifoldIntra} depend differentiably on $\Omega$ \cite{Lax2007}.
\end{proof}

    \textcolor{black}{
    \begin{remark}
        The implementation of proposition \ref{LemmaRho2} in system \eqref{FullReducedSystemLGVP} sets the problem into a convenient scenario for chimera-like behaviors, in which one population is synchronized whilst the other can produce different patterns, as similar procedures have been applied in the non-adaptive case \cite{Abrams2008}. Nevertheless, if such a result is neglected, the normal hyperbolicity and stability properties of the critical manifolds \eqref{CoevolutiveInterCriticalManifold} and \eqref{CoevolutiveIntraCriticalManifold}, for both the coevolutive inter and intracoupling cases, \eqref{CoevolutiveIntercouplingReducedLGVP} and \eqref{CoevolutiveIntraReducedLGVP} respectively,  will be different, although the dimension of such manifolds is preserved as long as only one slowly adapting component is considered given the fast-slow structure of the problem.
    \end{remark}}

Consequently, since we have already provided with the conditions for normal hyperbolicity and attractiveness of the critical manifolds \eqref{CoevolutiveInterCriticalManifold} and \eqref{CoevolutiveIntraCriticalManifold} of the adaptive inter and intracoupling systems, now we present the result which enables us to produce different synchronization patterns, including stable chimera states, by considering a slow macroscopic coevolution in \eqref{CoevolutiveIntercouplingReducedLGVP} and \eqref{CoevolutiveIntraReducedLGVP}.
\begin{theorem}[Stable chimera states on the mean field]
    Given an adequate macroscopic adaptive law in the coevolving intercoupling \eqref{CoevolutiveIntercouplingReducedLGVP} or intracoupling \eqref{CoevolutiveIntraReducedLGVP} case, any long-time synchronization pattern, including stable chimera states, is produced in the perturbed mean field when the normal hyperbolicity and attracting conditions for the critical manifold $\mathcal{C}_{0}^{\pm}$ are satisfied, for $\varepsilon$ positive and sufficiently small.
    \label{TheoremNetworkToMeanField}
\end{theorem}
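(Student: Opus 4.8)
The plan is to recognize that the statement is, at its core, an application of Fenichel's theorem (theorem \ref{FenichelTheorem}) to the singularly perturbed systems \eqref{CoevolutiveIntercouplingReducedLGVP} and \eqref{CoevolutiveIntraReducedLGVP}, exploiting the freedom in the macroscopic adaptive law $f$ to prescribe the reduced dynamics on the critical manifold. In both systems the fast variables are $(\rho_{1}, \psi)$ and the single slow variable is the co-evolving coupling ($\mu$ or $k_{1}$), with $\varepsilon$ the time-scale separation. First I would fix a compact, normally hyperbolic, attracting piece of the critical manifold: by proposition \ref{PropositionInterOmega=0} (respectively proposition \ref{PropIntraOmega0}), for $|\Omega|\ll1$ the branch $\mathcal{C}_{0}^{\pm}$ (respectively $\mathcal{C}_{0}^{-}$) is normally hyperbolic and attracting throughout $\rho_{1}\in(0,1)$; since theorem \ref{FenichelTheorem} requires compactness I would restrict to $\mathcal{S}_{0} = \mathcal{C}_{0}^{\pm}\cap\{\rho_{1}\in[a,b]\}$ with $0<a<b<1$, a compact submanifold with boundary that inherits normal hyperbolicity and attractiveness.

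Next I would invoke theorem \ref{FenichelTheorem} to obtain, for $\varepsilon$ positive and sufficiently small, an attracting slow manifold $\mathcal{S}_{\varepsilon}$ diffeomorphic and $O(\varepsilon)$-close to $\mathcal{S}_{0}$, whose flow converges to the slow flow as $\varepsilon\to0$. The reduced dynamics on $\mathcal{C}_{0}^{\pm}$ is exactly the adaptive equation $\dot{\mu}=f$ (or $\dot{k}_{1}=f$) restricted and projected onto the critical manifold, so the key observation is that by choosing the macroscopic law $f$ one can sculpt this one-parameter slow flow to possess any prescribed hyperbolic attractor. For a stable chimera state I would design $f$ so that the slow flow has a hyperbolic, asymptotically stable equilibrium at an interior point with $0<\rho_{1}<1$ (recall $\rho_{2}=1$ from proposition \ref{LemmaRho2}), namely a state in which one population is fully coherent while the other is only partially synchronized. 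For a persistent breathing chimera I would instead arrange $f$ so that the slow flow admits a hyperbolic stable limit cycle in the interior, along which $\rho_{1}$ oscillates within $(0,1)$.

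The conclusion then follows from the persistence clause of Fenichel theory: normally hyperbolic invariant sets of the slow problem, such as equilibria and periodic orbits, persist on $\mathcal{S}_{\varepsilon}$ with identical stability type, as recorded in the remark following theorem \ref{FenichelTheorem}. Because $\mathcal{S}_{\varepsilon}$ is attracting in the fast directions, every trajectory starting in a neighborhood is drawn onto $\mathcal{S}_{\varepsilon}$ and thereafter shadows the perturbed slow flow, so the prescribed attractor governs the long-time behaviour of the full perturbed mean field. Interpreting the persistent equilibrium (respectively limit cycle) together with $\rho_{2}=1$ then yields the desired stable (respectively breathing) chimera state.

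The hard part will be the bookkeeping that makes the phrase ``any pattern'' legitimate rather than merely plausible. I must verify that the invariant set produced by the chosen $f$ is genuinely hyperbolic within the slow flow, so that the persistence clause applies, and that it lies strictly in the interior $\rho_{1}\in(a,b)$, so that no trajectory escapes through $\partial\mathcal{S}_{\varepsilon}$ and the local invariance guaranteed by theorem \ref{FenichelTheorem} is upgraded to the attraction of a genuine attractor. A secondary technical point is confirming that the projection of the slow equation onto $\mathcal{C}_{0}^{\pm}$ is smooth and well defined on the chosen compact piece, which holds away from $\rho_{1}=0$ and from any fold set where normal hyperbolicity fails; this is precisely why the restriction to $[a,b]\subset(0,1)$ and the hypotheses of propositions \ref{PropositionInterOmega=0} and \ref{PropIntraOmega0} are needed.
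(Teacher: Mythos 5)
Your main line of argument coincides with the paper's: on a normally hyperbolic, attracting piece of $\mathcal{C}_{0}^{\pm}$, design the macroscopic law $f$ so that the reduced (slow) flow has a hyperbolic, exponentially stable equilibrium at a prescribed point with $0<\rho_{1}<1$ (and $\rho_{2}=1$ via proposition \ref{LemmaRho2}), then conclude by persistence that the perturbed system with $0<\varepsilon\ll1$ has an exponentially stable equilibrium nearby, which is the stable chimera. The only difference in machinery is that the paper reaches the persistence conclusion by combining Fenichel's theorem with classical two-time-scale composite stability results \cite{Kokotovic1999} and a regular-perturbation argument, whereas you rely solely on Fenichel's theorem together with the remark that hyperbolic equilibria of the slow flow persist on $\mathcal{S}_{\varepsilon}$; these are interchangeable here. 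Your explicit restriction to a compact piece $\left\{\rho_{1}\in[a,b]\right\}$ with $0<a<b<1$ is in fact a refinement: the paper's proof never addresses the compactness hypothesis of theorem \ref{FenichelTheorem}, nor escape through the boundary of the slow manifold, so on that point your bookkeeping is more careful than the published argument.

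There is, however, one concrete flaw: your construction of breathing chimeras via ``a hyperbolic stable limit cycle of the slow flow'' cannot work in either \eqref{CoevolutiveIntercouplingReducedLGVP} or \eqref{CoevolutiveIntraReducedLGVP} with an \emph{autonomous} law $f$. The slow variable ($\mu$ or $k_{1}$) is scalar, and each branch of the critical manifold is a one-dimensional curve (along $\mathcal{C}_{0}^{\pm}$ both $\psi$ and the coupling are determined by $\rho_{1}$); an autonomous flow on a one-dimensional manifold diffeomorphic to an interval is monotone between equilibria and admits no nonconstant periodic orbit, so no autonomous choice of $f(\rho_{1},\psi,\mu)$ yields a limit cycle in the slow flow. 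This is precisely why the paper's breathing chimeras (figure \ref{Fig:InterBreathing}) are produced with an explicitly time-dependent law $\dot{\mu}=\varepsilon_{\mu}\cos(0.02\,t)$: the slow periodic forcing effectively appends a second slow variable, and only then do stable periodic slow dynamics become possible, with hyperbolicity understood in the Floquet sense on the extended phase space, a setting not covered by theorem \ref{FenichelTheorem} as stated. The paper's own proof sidesteps this entirely by constructing only equilibria and leaving breathing patterns to the numerics; your proposal therefore establishes the stable-chimera part of the statement exactly as the paper does, but the limit-cycle step should be deleted or reformulated for periodically forced laws before it can be called a proof of that part of the claim.
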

\begin{proof}
    Once we have determined the normally hyperbolic and attracting conditions of the critical manifold $\mathcal{C}_{0}^{\pm}$, and recalling that this geometric object is composed by exponentially stable equilibria of the fast problem, we can construct a suitable slow macroscopic adaptive law with an exponentially stable equilibrium point at $\left( \rho_{1}, \psi, \textcolor{black}{\zeta_{\sigma \sigma'}} \right) = \left( \rho_{1}^{*}, \psi^{*}, \textcolor{black}{\zeta_{\sigma \sigma'}}^{*} \right)$, such that if this point is part of $\mathcal{C}_{0}^{\pm}$, then it is exponentially stable in the perturbed system \cite{Kokotovic1999}, for \textcolor{black}{$\zeta_{\sigma \sigma'}$, with $\sigma, \sigma' = 1,2$}, denoting the type of coevolutive coupling considered, either inter \textcolor{black}{$\left( \zeta_{\sigma \sigma'} = \zeta_{12} = \zeta_{21} = \mu \right)$, or intracoupling $\left( \zeta_{11} = k_{1}, \zeta_{22} = k_{2} \right)$}. Moreover, by Fenichel's theory there exists a slow manifold for which the dynamics converge to the slow flow as $\varepsilon\rightarrow0$, and by regular perturbation results the dynamics of the unperturbed system persist under weak perturbations, so there exists a point in a neighborhood of $\left( \rho_{1}^{*}, \psi^{*}, \textcolor{black}{\zeta_{\sigma \sigma'}}^{*} \right)$ which is an exponentially stable equilibrium of the perturbed system \eqref{FullReducedSystemLGVP} for sufficiently small $\varepsilon$. Finally, recalling that we consider a near synchronization second population, i.e., $\rho_{2}\rightarrow1$, it is possible to set the equilibrium point at any position along the critical manifold with an adequate adaptive law. Hence, by persistence any synchronization behavior, including stable chimera states, can be produced in the mean-field of the weakly perturbed system.
\end{proof}
Finally, the next result provides the necessary condition which allows us to identify the dynamics occurring in the mean-field system with long time behaviours at the network level.
\begin{theorem}[Stable chimera states on a network]    
    \label{TheoremNetworkMeanField}
    Under normally hyperbolic conditions of the critical manifold $\mathcal{C}_{0}^{\pm}$, both for the adaptive intercoupling \eqref{CoevolutiveIntercouplingReducedLGVP} or intracoupling \eqref{CoevolutiveIntraReducedLGVP} systems, dynamical behaviours observed in the mean-field reduction, such as stable chimera states and sustained breathing chimera states, are preserved in the network when considering heterogeneous populations with a slowly coevolving and macroscopic coupling strength, for $\varepsilon$ sufficiently small and for $N$ large enough.
\end{theorem}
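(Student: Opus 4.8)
The plan is to leverage the two structural facts already in place. First, by Theorem~\ref{TheoremNetworkToMeanField}, under the stated normal-hyperbolicity assumptions the perturbed mean field admits an exponentially stable invariant set---an equilibrium in the case of a stable chimera, or a limit cycle in the case of a sustained breathing chimera---situated $O(\varepsilon)$-close to the attracting branch of $\mathcal{C}_{0}^{\pm}$, uniformly for the adaptive inter- and intra-coupling systems alike. Second, for heterogeneous populations ($\Delta_{\sigma}>0$) the Ott-Antonsen manifold is globally attracting, so that in the thermodynamic limit the macroscopic order parameters $(\rho_{1},\rho_{2},\psi)$ of the network \eqref{GeneralModelLGVP} obey exactly the reduced equations \eqref{FullReducedSystemLGVP}, with the second population held near synchrony ($\rho_{2}\to 1$) by Proposition~\ref{LemmaRho2}. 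I would begin by fixing $\varepsilon$ small enough that this attractor persists as a normally hyperbolic object of the mean field, as guaranteed jointly by Fenichel's Theorem~\ref{FenichelTheorem} and Theorem~\ref{TheoremNetworkToMeanField}, and by recording its exponential rate of attraction together with the size of its basin.

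Next I would make precise the sense in which the finite-$N$ network approximates the mean field. The idea is that for finite $N$ the empirical order parameters satisfy the reduced equations \eqref{FullReducedSystemLGVP} up to a fluctuation term that vanishes as $N\to\infty$; invoking the global attractiveness of the Ott-Antonsen manifold together with standard finite-size estimates for heterogeneous Kuramoto ensembles, this fluctuation is of amplitude $O(1/\sqrt{N})$. The finite network may therefore be treated as a weakly perturbed copy of the mean-field system, the coupling strength still drifting slowly through its macroscopic adaptive law, and the perturbation controlled by $1/N$.

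The concluding step is a persistence-under-perturbation argument keyed to the hyperbolicity supplied in the first step. Since the invariant set of Theorem~\ref{TheoremNetworkToMeanField} is exponentially stable, it is robust: for a chimera equilibrium one applies an implicit-function or Lyapunov-function argument to produce a nearby attracting state surviving the $O(1/\sqrt{N})$ perturbation, while for a breathing chimera one argues via hyperbolicity of the associated Poincar\'e return map that the limit cycle persists. Choosing $N$ large enough that the fluctuation amplitude is small relative to the recorded attraction rate and basin then confines the network's order parameters to a neighborhood of the mean-field pattern, so that the same synchronization behaviour is observed at the network level.

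The hard part will be controlling the two small parameters simultaneously: $\varepsilon\to 0$ (time-scale separation) and $1/N\to 0$ (finite size). One must ensure the estimates are uniform---that the basin and attraction rate of the mean-field attractor do not degenerate as $\varepsilon\to 0$, and that the finite-size fluctuation bound holds uniformly along the slow excursion traced by the adaptive coupling, so that the slow manifold $\mathcal{S}_{\varepsilon}$ remains normally hyperbolic throughout. Reconciling the order of these limits is the delicate technical point; by contrast, the softer geometric claims---existence of the attractor and its robustness---follow directly from Fenichel's theory and exponential stability.
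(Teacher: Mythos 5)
Your proposal is correct in its core logic and shares the paper's skeleton: both arguments rest on (i) Theorem~\ref{TheoremNetworkToMeanField} supplying a persistent exponentially stable object in the perturbed mean field, and (ii) the global attractiveness of the Ott--Antonsen manifold for heterogeneous populations ($\Delta_{\sigma}>0$), which guarantees that the attractors of the reduced system \eqref{OAReducedLGVP} are the only attractors of the network \eqref{GeneralModelLGVP}. Where you diverge is in how the finite-$N$ clause is handled. The paper's proof is entirely soft: it cites the Ott--Antonsen results for the thermodynamic limit, invokes Theorem~\ref{TheoremNetworkToMeanField}, and concludes; the phrase ``for $N$ large enough'' is never given quantitative content, the finite network being treated as an approximation of the continuum description. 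You instead try to close that gap explicitly, modelling the finite network as the mean field plus an $O(1/\sqrt{N})$ fluctuation and running a persistence argument (implicit function theorem for the chimera equilibrium, hyperbolicity of a Poincar\'e map for the breathing cycle). This buys a genuinely stronger and more honest statement, but note that the ``standard finite-size estimates for heterogeneous Kuramoto ensembles'' you invoke are not established in the paper and are not routine: the Ott--Antonsen reduction is exact only in the thermodynamic limit, and the fluctuation bound must hold uniformly over the slow time scale $O(1/\varepsilon)$ traced by the adaptive coupling --- precisely the two-parameter ($\varepsilon$ versus $1/N$) uniformity issue you correctly identify as the delicate point. In short, your argument subsumes the paper's and is preferable in rigor of ambition, but its third step requires machinery (propagation-of-chaos-type estimates uniform in slow time) that neither you nor the paper actually supplies; the paper simply avoids the issue by remaining at the level of the thermodynamic limit.
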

\begin{proof}
        It is known that, as long as the probability distributions for the natural frequencies of the oscillators have a finite width, i.e. $\Delta_{\sigma}>0$ $(\sigma = 1,2,\dots, M)$, the attractors for the order parameter dynamics obtained in the reduced Ott-Antonsen manifold \eqref{OAReducedLGVP} are the only attractors of the network \eqref{GeneralModelLGVP} \cite{OttAntonsen2009, OttAntonsen2011}. Moreover, theorem \ref{TheoremNetworkToMeanField} demonstrates the preservation of equilibrium points on the perturbed system for $\varepsilon$ small enough. Therefore, dynamical behaviours observed in the mean-field reduction, including stable chimera states and breathing chimera states, are present in the network when considering non-homogeneous populations.
\end{proof}
In the following section, we show the successful generation of stable chimera states by following our results, with numerical simulations for both the network \eqref{GeneralModelLGVP} and the mean-field \eqref{FullReducedSystemLGVP} when considering the inter and intracoupling adaptive scenarios discussed in this section.

\section{Numerical results}
\label{Sec:NumericResults}
In this section we present a series of simulations validating the effectiveness of our results as well as some intriguing patterns observed under certain non-hyperbolic conditions. It is important to emphasize though that the Ott-Antonsen ansatz consider the thermodynamic limit, i.e. $N_{1,2}\rightarrow\infty$, so the network and the mean-field outputs tend to increasingly approach as the number of oscillators is augmented. For this reason, the images corresponding to the network results have two components, the actual output (orange) and a smooth version (red) produced by filtering the original trajectory with a Savitzky-Golay filter \cite{SavitskyGolay1964}, as this least-squares smoothing method reduces noise while preserving the shape and height of waveform peaks \cite{Schafer2011}, producing a closer approximation to the mean-field response without dramatically increasing the number of oscillators in our network. Additionally, for the numeric results presented in this work we adopt the observations made by B{\"o}hle, Kuehn and Thalhammer, which as they claim makes the simulations faster compared to classical approaches \cite{Bohle2022}.
    \subsection{Coevolutionary Intercoupling}
    First, the behavior of both the network \eqref{GeneralModelLGVP} and its mean-field counterpart \eqref{CoevolutiveIntercouplingReducedLGVP} are compared when different intercoupling coevolving rules are employed. In figure \ref{Fig:InterFeedback}, we present projections to the $\left(R_{1},\mu\right)$ and $\left(\rho_{1},\mu\right)$ planes of \eqref{GeneralModelLGVP} and \eqref{CoevolutiveIntercouplingReducedLGVP}, with $\dot{\mu}=\varepsilon_{\mu}(-\mu + \gamma_{\mu} - \eta_{\mu}\rho_{1})$. Notice that $k_{1}<2\Delta_{1}$ and thus the critical manifold (grey) is normally hyperbolic and attracting for every $\rho_{1}\in(0,1)$. Moreover, observe that the behavior of the order parameters $R_{1}$ and $\rho_{1}$ (red) converge to the intersection of the critical manifold and the equilibria set of the slow adaptation (magenta), which corresponds to a highly incoherent region of the first layer and with the assumption that the second population is near synchrony, i.e., $\rho_{2}\approx1$ and $R_{2}\approx1$, we have effectively produced an exponentially stable chimera state, demonstrating the generation of such pattern in the network \eqref{GeneralModelLGVP} by geometric results derived from its mean-field reduction \eqref{CoevolutiveIntercouplingReducedLGVP}, as stated by theorem \ref{TheoremNetworkMeanField}.
    \paragraph*{}Similarly, in figure \ref{Fig:InterBreathing} we present a persistent breathing chimera state produced through the same mechanism as before in both \eqref{GeneralModelLGVP} and \eqref{CoevolutiveIntercouplingReducedLGVP}, with $\dot{\mu} = \varepsilon_{\mu}\cos{(0.02 t)}$. Observe that, since the fast structure of the problem has not been modified, the critical manifold, as well as its normal hyperbolicity and stability conditions, remain the same and notice that the order parameters $R_{1}$ and $\rho_{1}$ oscillate between high and low synchrony regimes, which correspond to the so called breathing chimera state, characterized by sustained periodical variations on the synchronization level \cite{Omelchenko2022}. Finally, notice that the synchronization level of the second population presents small-amplitude oscillations with the same frequency of the first population due to the intracoupling strength $k_{2}$ employed, when this value is increased a stronger interaction in the second population is achieved and the oscillatory pattern is only appreciated in the first population. Therefore, the ratio between both intracoupling strengths $k_{1,2}$ is selected in such a way that the oscillations in the second populations' level of synchrony exhibits only small amplitude oscillations.

    \begin{figure*}[htbp]
        \begin{subfigure}[htbp]{0.485\textwidth}
            \includegraphics[width=1.0\linewidth]{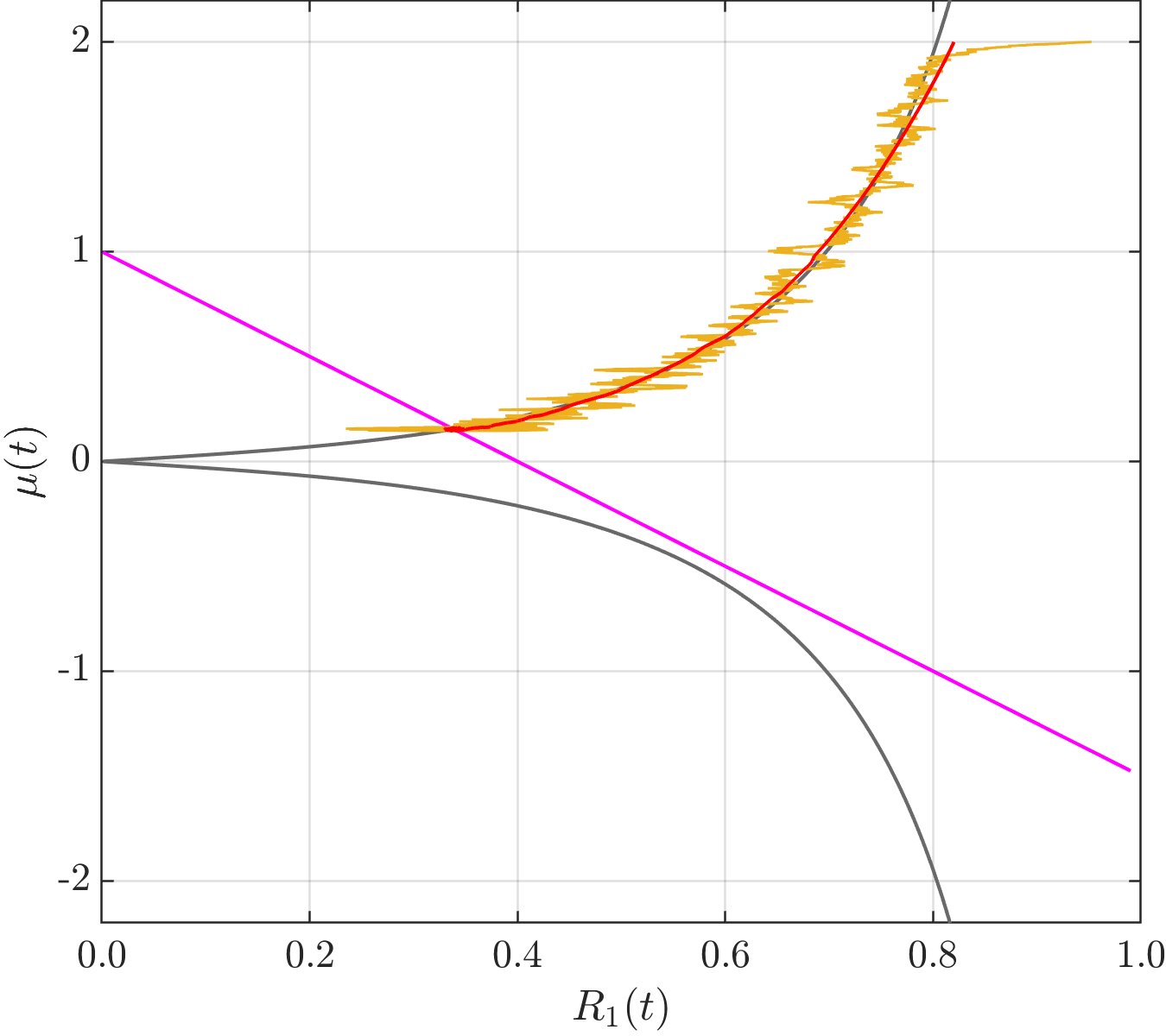}
        \end{subfigure}\hfill
        \begin{subfigure}[htbp]{0.5\textwidth}
            \includegraphics[width=1.0\linewidth]{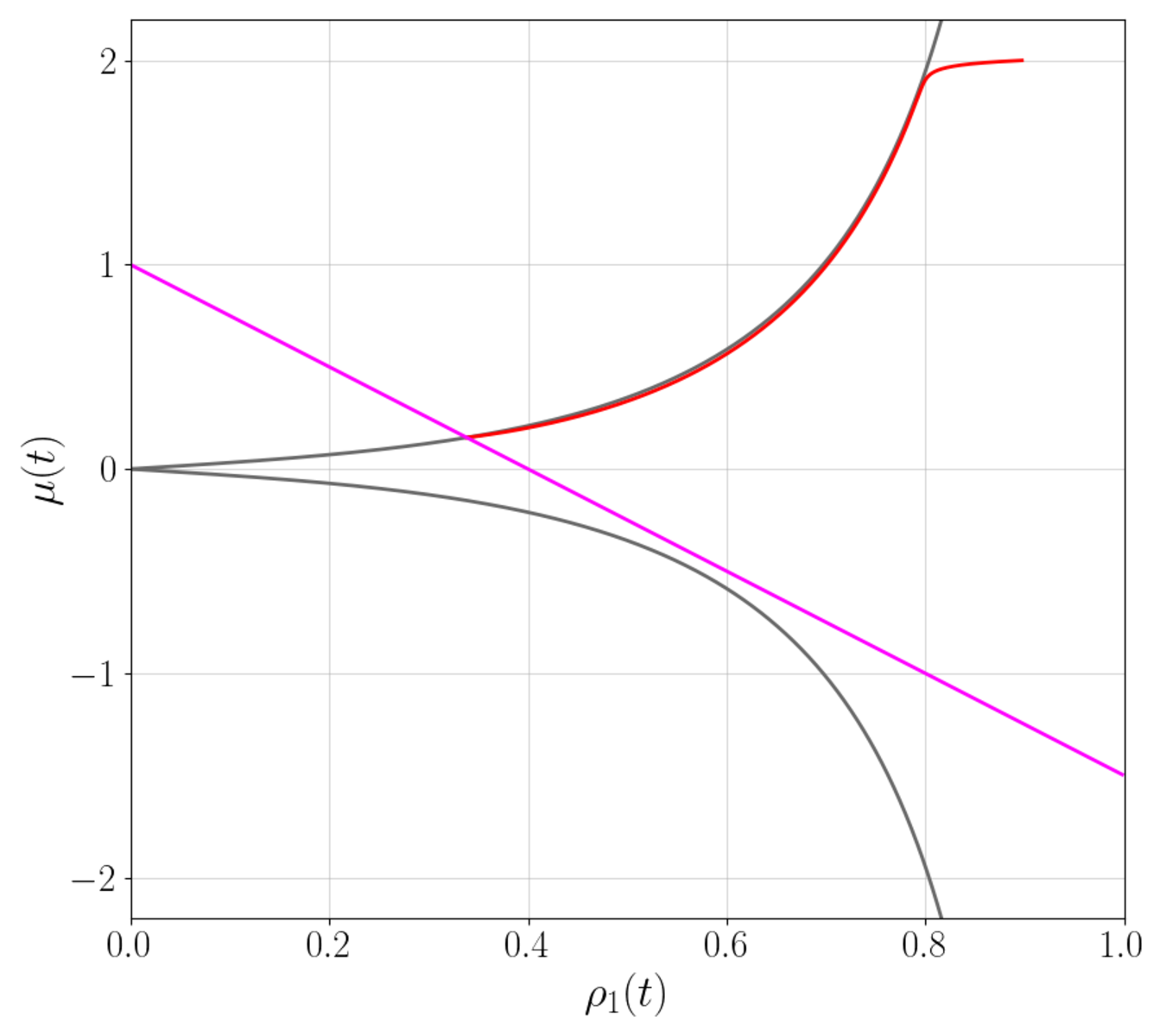}
        \end{subfigure}
        \vskip \baselineskip
        \begin{subfigure}[htbp]{0.49\textwidth}
            \includegraphics[width=1.0\linewidth]{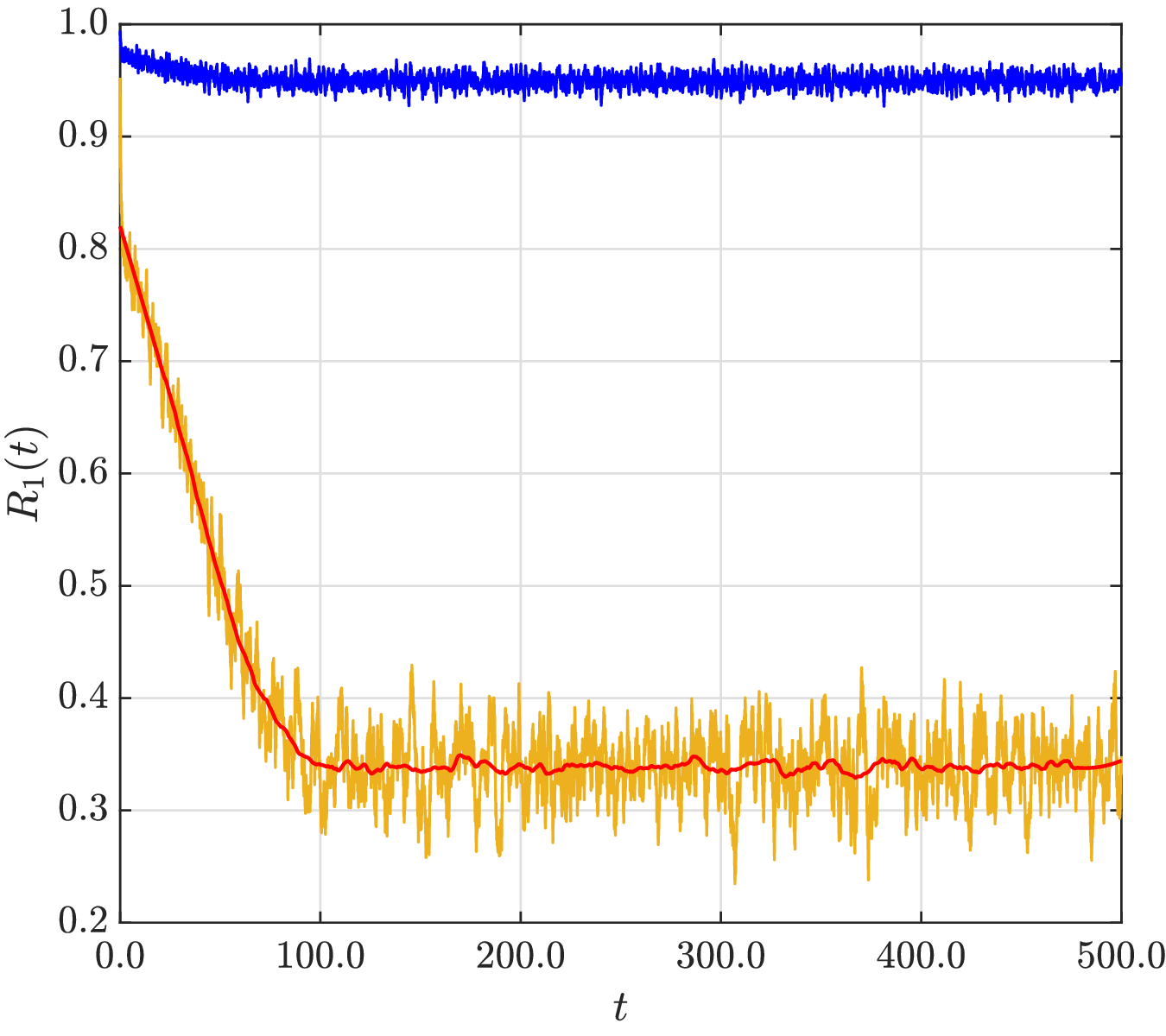}
        \end{subfigure}\hfill
        \begin{subfigure}[htbp]{0.5\textwidth}
            \includegraphics[width=1.0\linewidth]{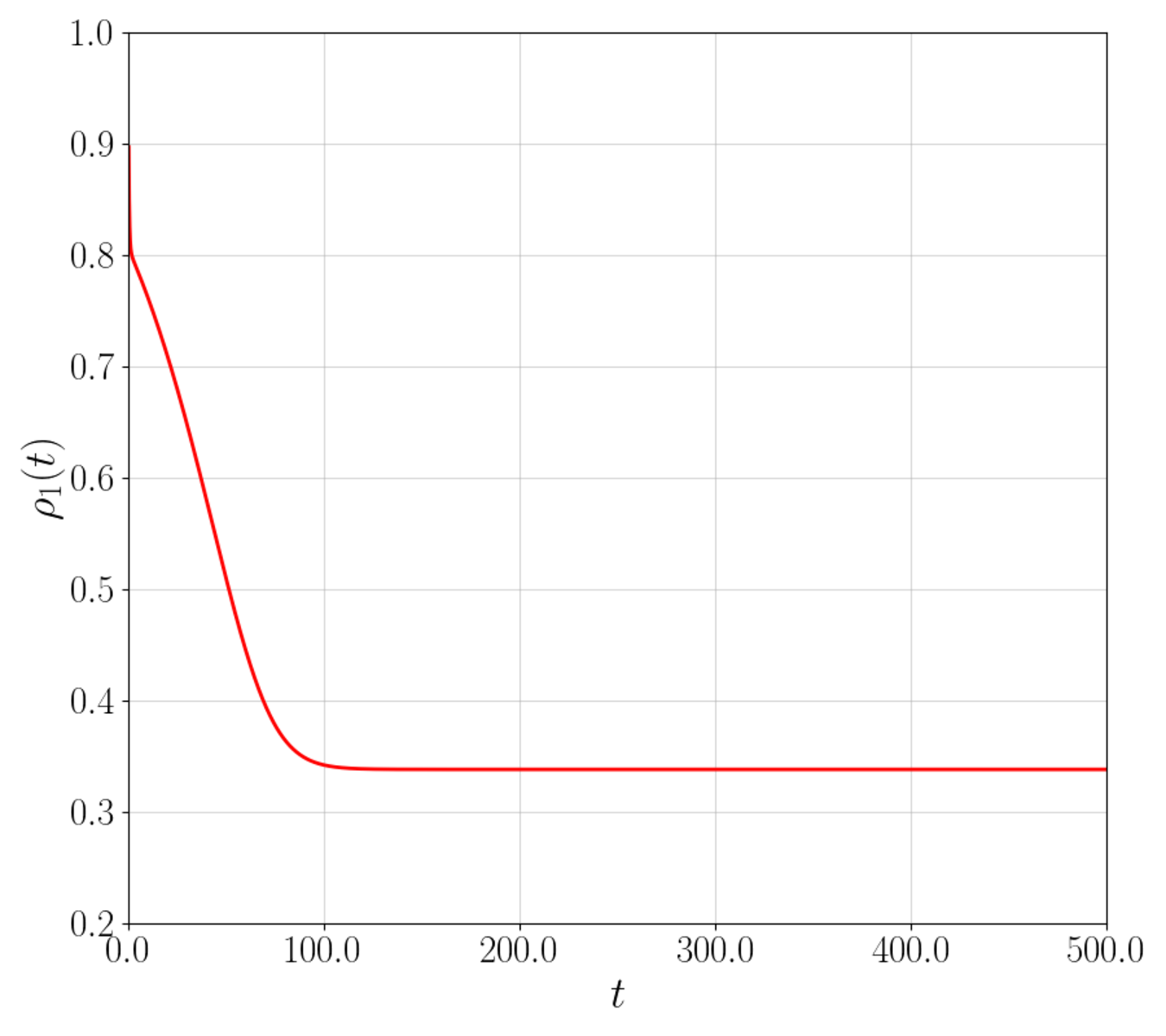}
        \end{subfigure}
        \caption{Locally exponentially stable intercoupling chimera. Upper row: network (left) and mean-field (right) projections to the $\left( R_{1},\mu \right)$ and $\left(\rho_{1},\mu\right)$ planes for \eqref{GeneralModelLGVP} and \eqref{CoevolutiveIntercouplingReducedLGVP}, respectively. Attracting critical manifold (grey), slow adaptation nullcline (magenta) and response (red). Lower row: order parameter evolution in the network (left) and mean-field (right). For the network, second (blue) and first (orange) population synchronization level and the filtered version of the latter (red). Adaptive law $\dot{\mu} = \varepsilon_{\mu}(-\mu + \gamma_{\mu} - \eta_{\mu}\rho_{1})$ and initial conditions $(\rho_{1},\psi,\mu) = (0.9, -0.5, 2.0)$. Parameters: $\Delta_{1}=0.6$, $\Delta_{2}=0.1$, $\omega_{1} = 101/20$, $\omega_{2} = \omega_{1}+0.01$, $k_{1} = 0.9$, $k_{2}=2.0$, $\gamma_{\mu} = 1.0$, $\eta_{\mu} = 2.5$, $\varepsilon_{\mu}=0.02$, \textcolor{black}{$\beta_{11}=\beta_{12}=\beta_{21}=\beta_{22}=0$}, and $N_{1,2}=300$.}
        \label{Fig:InterFeedback}
    \end{figure*}

    \begin{figure*}
        \begin{subfigure}[htbp]{0.48\textwidth}
            \centering
            \includegraphics[width=1.0\linewidth]{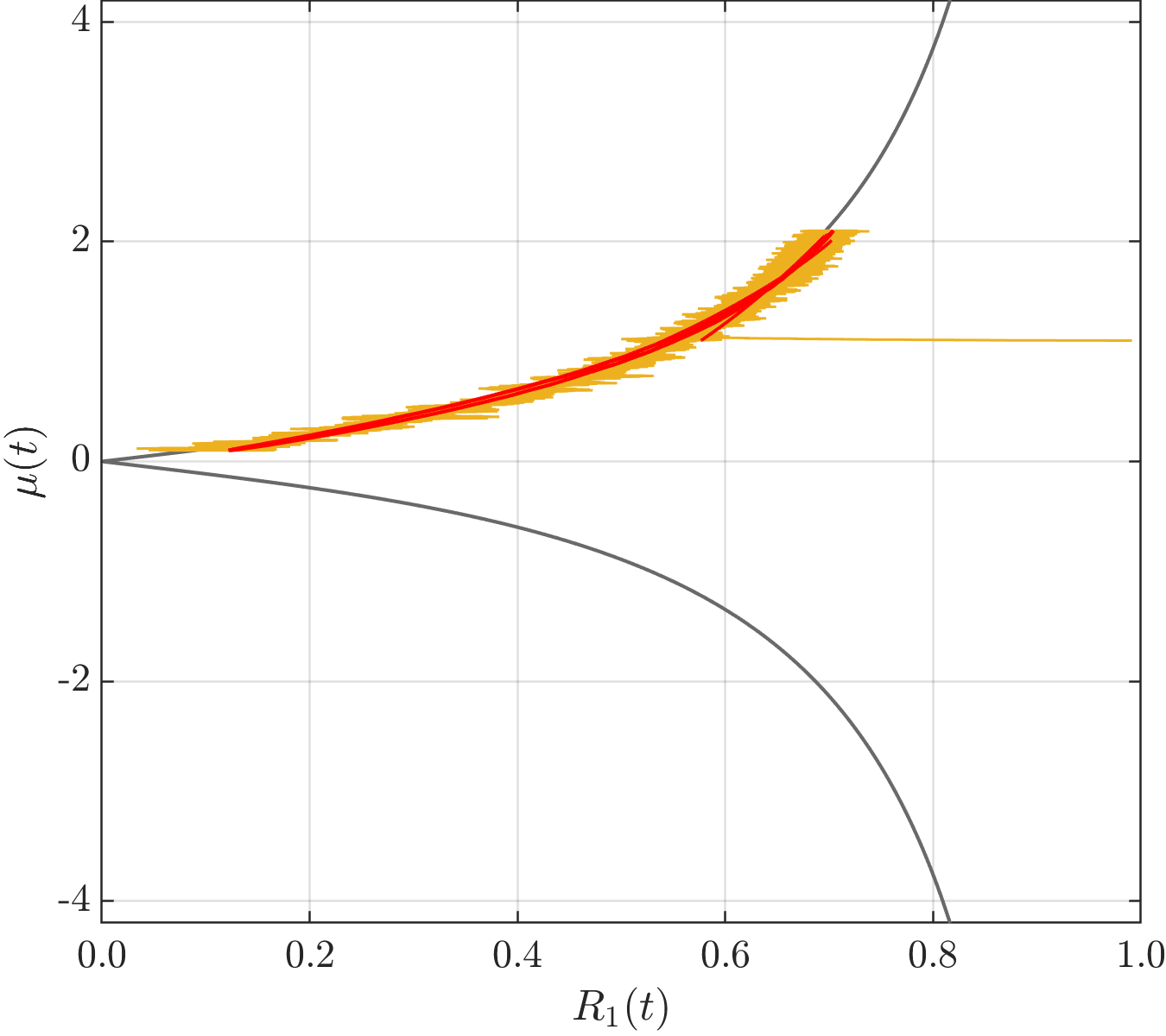}
        \end{subfigure}\hfill
        \begin{subfigure}[htbp]{0.5\textwidth}
            \centering
            \includegraphics[width=1.0\linewidth]{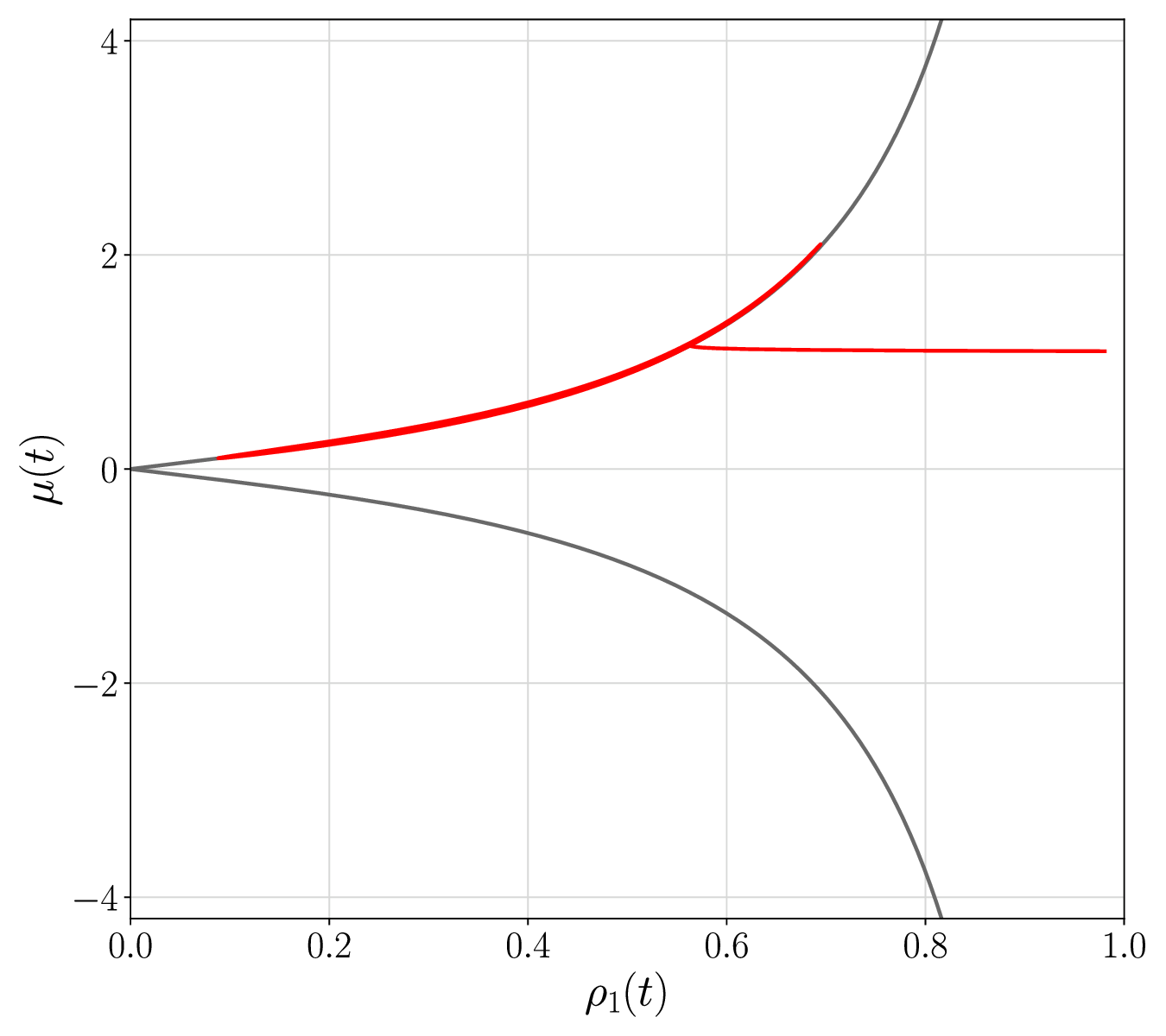}
        \end{subfigure}
        \vskip \baselineskip
        \begin{subfigure}[htbp]{0.49\textwidth}
            \centering
            \includegraphics[width=1.0\linewidth]{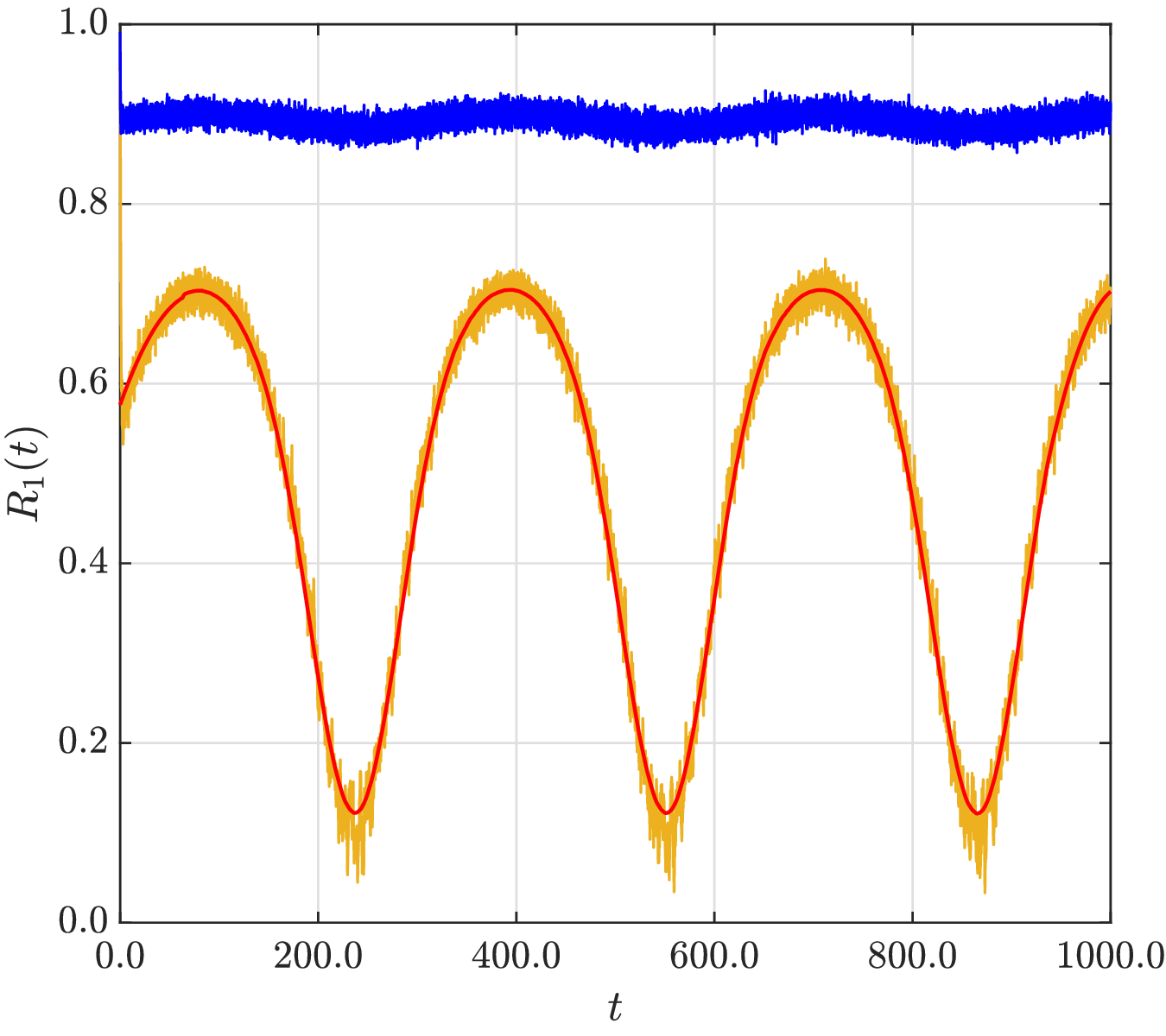}
        \end{subfigure}\hfill
        \begin{subfigure}[htbp]{0.5\textwidth}
            \centering
            \includegraphics[width=1.0\linewidth]{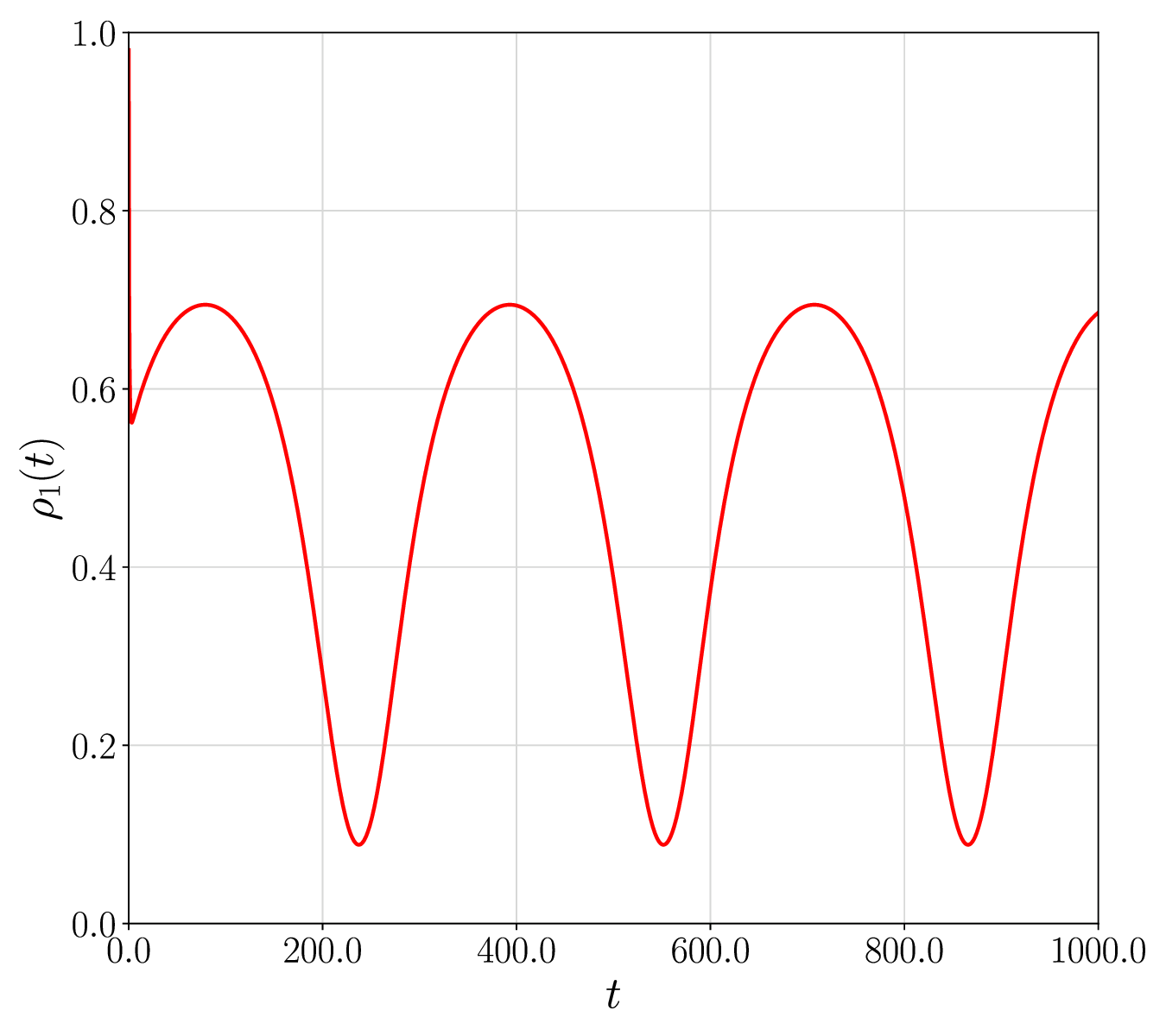}
        \end{subfigure}
        \caption{Locally exponentially stable intercoupling breathing chimera. Upper row: network (left) and mean-field (right) projections to the $\left( R_{1},\mu \right)$ and $\left(\rho_{1},\mu\right)$ planes for \eqref{GeneralModelLGVP} and \eqref{CoevolutiveIntercouplingReducedLGVP}, respectively. Attracting critical manifold (grey), and response (red). Lower row: network (left) and mean-field (right) order parameter evolution on time. For the network, second (blue) and first (orange) population synchronization level and the filtered version of the latter (red). Adaptive law $\dot{\mu} = \varepsilon_{\mu}\cos{(0.02 t)}$ and initial conditions $(\rho_{1},\psi,\mu) = (0.99, -0.5, 1.1)$. Parameters: $\Delta_{1}=1.0$, $\Delta_{2}=0.1$, $\omega_{1} = 101/20$, $\omega_{2} = \omega_{1}+0.01$, $k_{1} = 0.9$, $k_{2}=9.0$, $\varepsilon_{\mu}=0.02$, \textcolor{black}{$\beta_{11}=\beta_{12}=\beta_{21}=\beta_{22}=0$}, and $N_{1,2}=1000$.}
        \label{Fig:InterBreathing}
    \end{figure*}

\begin{subsection}{Coevolutionary Intracoupling}
Analogously to the intercoupling problem, we examine the numeric results obtained by considering system \eqref{GeneralModelLGVP} with an adaptive intracoupling strength and its associated mean-field reduction \eqref{CoevolutiveIntraReducedLGVP}. To begin with, a linear feedback rule $\dot{k}_{1}=\varepsilon_{1}(-k_{1} + \gamma_{1} - \eta_{1}\rho_{1})$ is considered and in figure \ref{Fig:IntraFeedback} we present the obtained results. As explained earlier, the critical manifold \eqref{CoevolutiveIntraCriticalManifold} of \eqref{CoevolutiveIntraReducedLGVP} has one normally hyperbolic and attracting branch, namely $\mathcal{C}_{0}^{-}$, while the other branch is at least of saddle type. In figure \ref{Fig:IntraFeedback}, the stable, saddle and unstable segments of the critical manifold \eqref{CoevolutiveIntraCriticalManifold} are represented by the continuous gray, dashed green and dotted blue lines, respectively. Observe that we have again initialized the first population of oscillators near complete synchronization and after a transient phase, it is clear that a chimera state is produced as the order parameters $\rho_{1}$ and $R_{1}$ converge to the intersection between the critical manifold and the slow flow set of equilibria. Therefore, as long as an appropriate macroscopic and slow adaptive law is designed, it is possible to produce diverse patterns due to the geometric properties of the mean-field \eqref{CoevolutiveIntraReducedLGVP}, and by means of theorem \ref{TheoremNetworkMeanField}, such long term dynamical features are reproduced in the network \eqref{GeneralModelLGVP}. Finally, it is important to highlight the chimera-like pattern produced in figure \ref{Fig:IntraFeedback}, as it is stable for a coupling beyond the critical value, i.e. $k_{1}>2\Delta_{1}$, which without any adaptation would usually lead the network to the synchronous attractor, similar to the results obtained for weakly heterogeneous oscillators \cite{Laing2009}, but now the coevolutive mechanism allows for larger heterogeneity in the network.

\begin{figure*}[htbp]
        \begin{subfigure}[htbp]{0.48\textwidth}
            \includegraphics[width=1.0\linewidth]{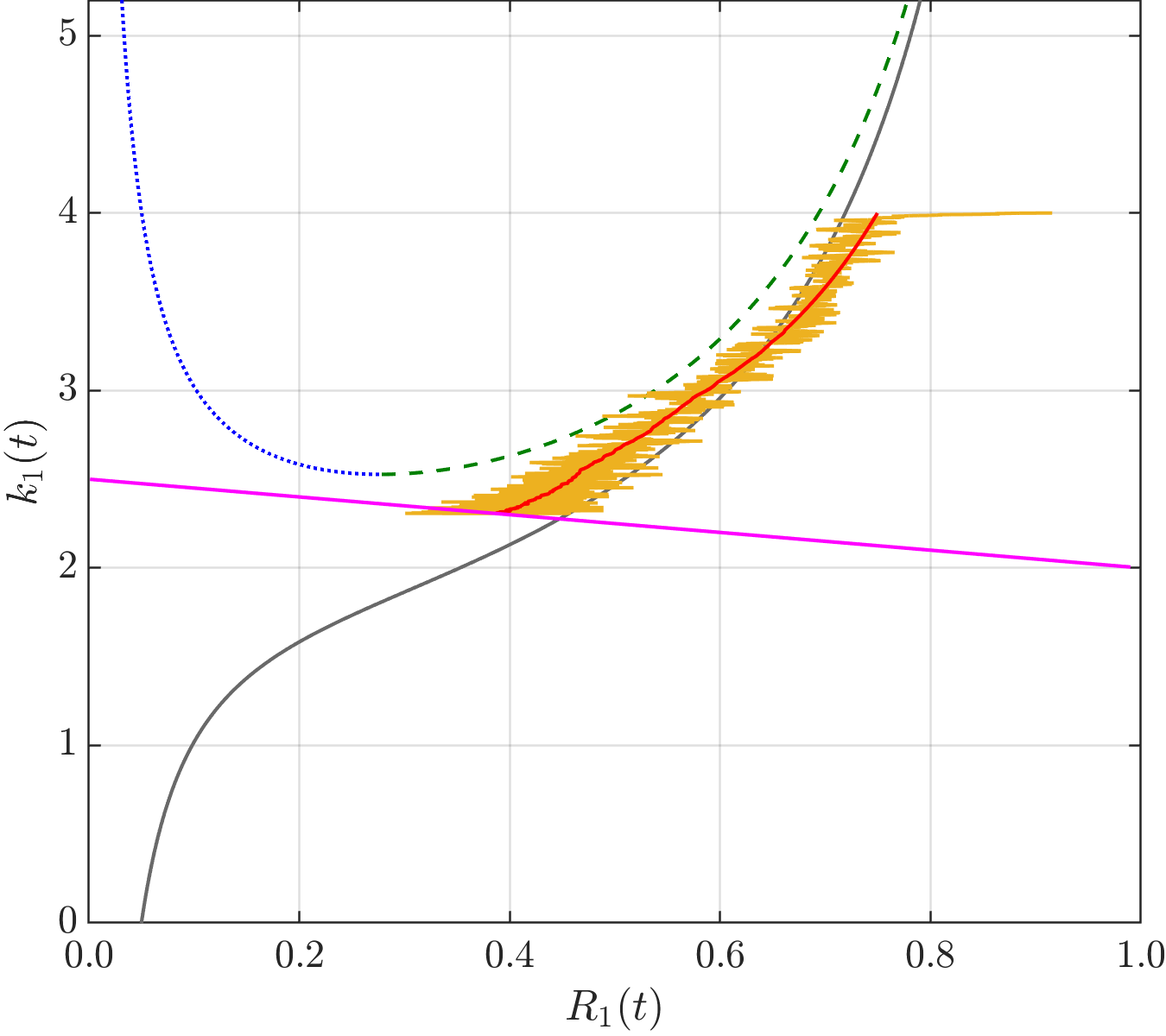}
        \end{subfigure}\hfill
        \begin{subfigure}[htbp]{0.5\textwidth}
            \includegraphics[width=1.0\linewidth]{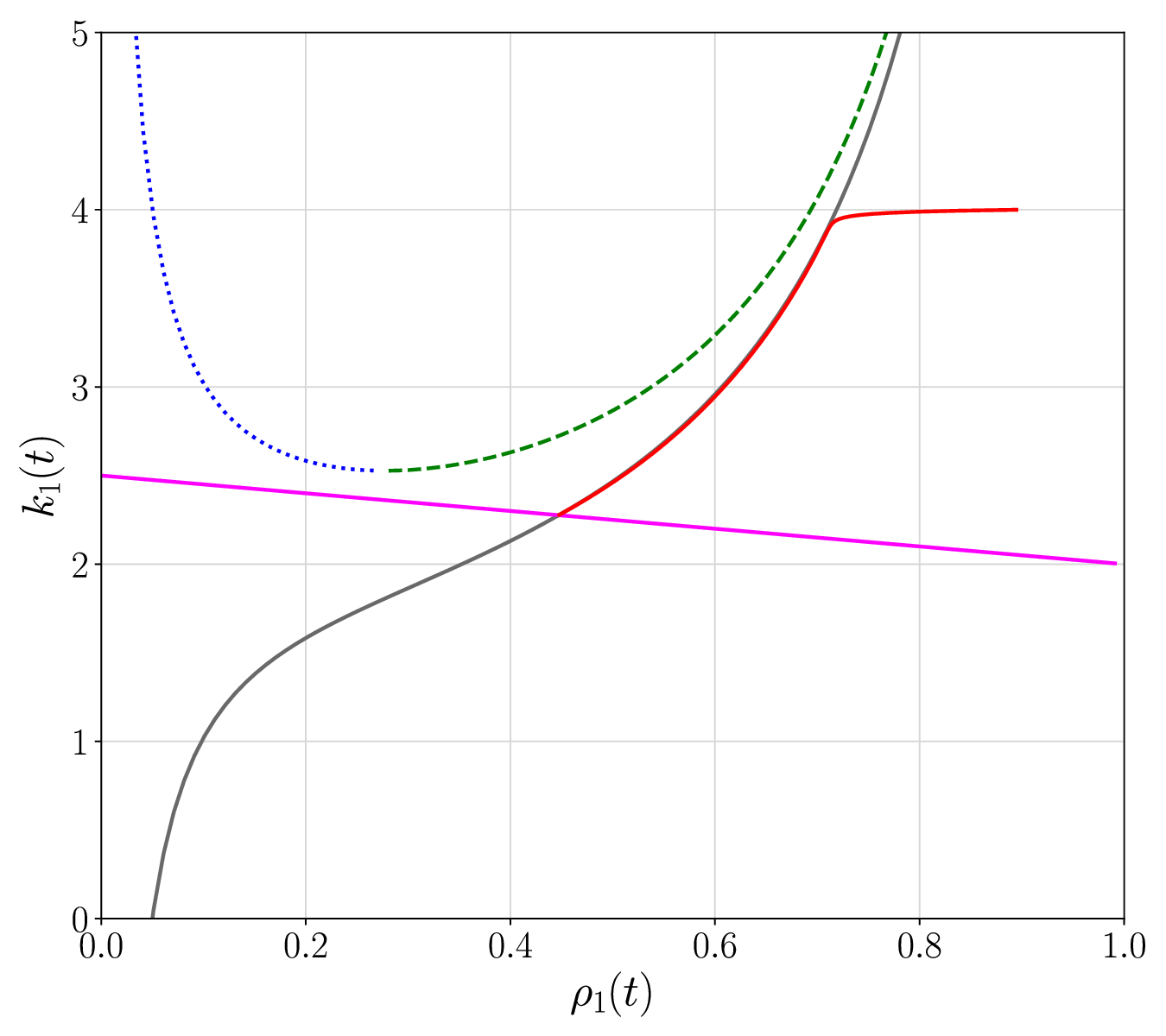}
        \end{subfigure}
        \vskip \baselineskip
        \begin{subfigure}[htbp]{0.49\textwidth}
            \includegraphics[width=1.0\linewidth]{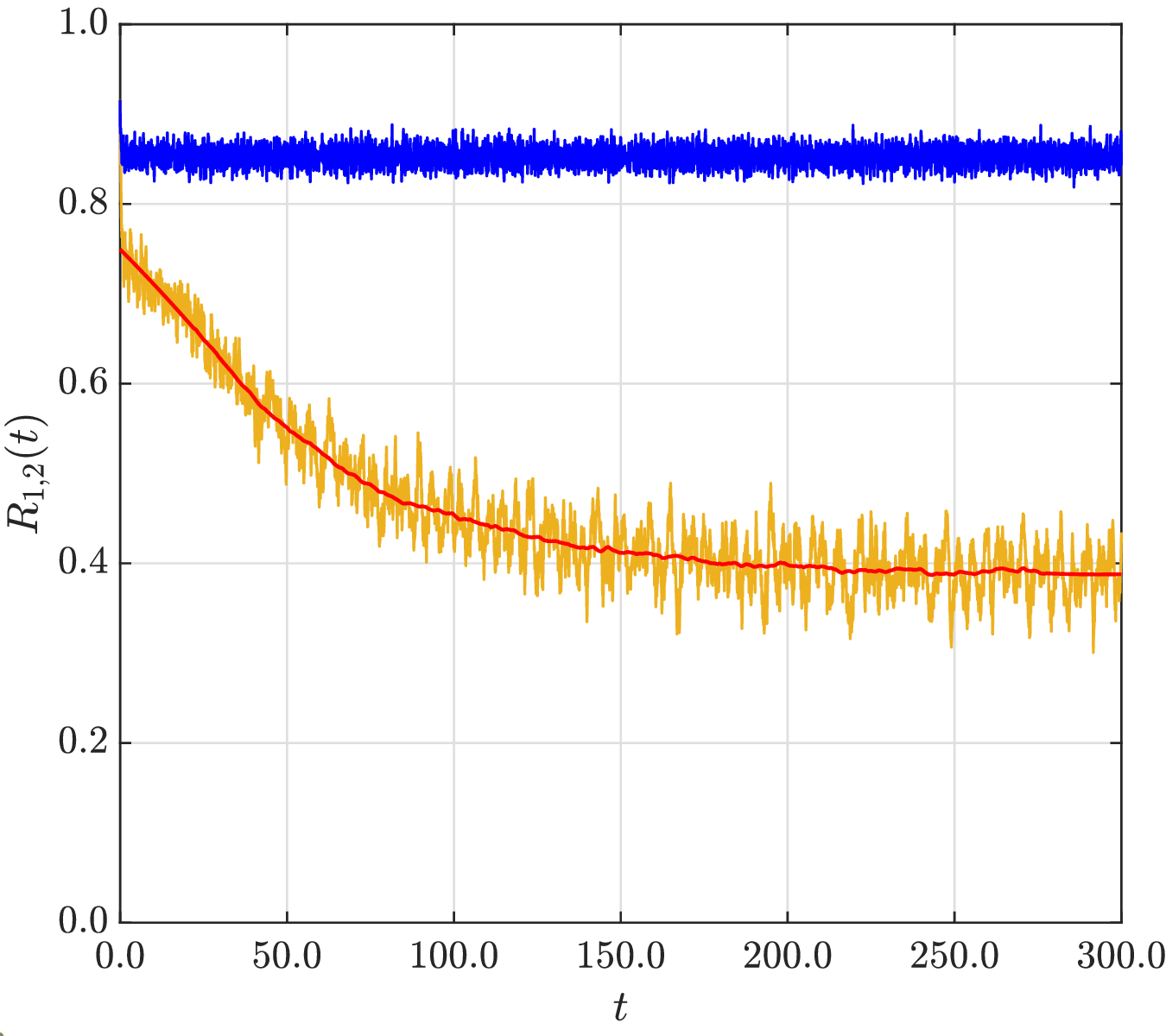}
        \end{subfigure}\hfill
        \begin{subfigure}[htbp]{0.5\textwidth}
            \includegraphics[width=1.0\linewidth]{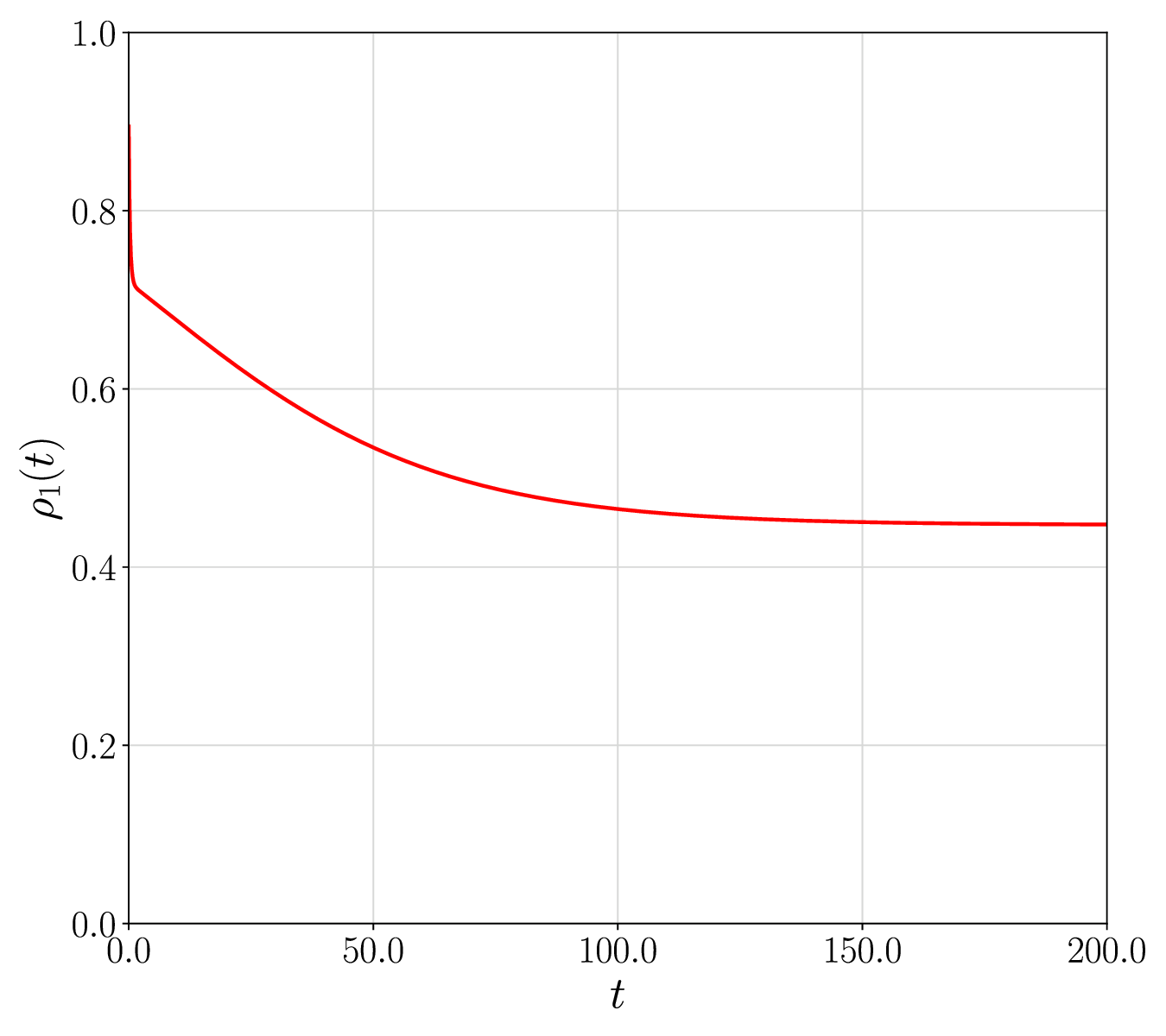}
        \end{subfigure}
        \caption{Locally exponentially stable intracoupling chimera. Upper row: network (left) and mean-field (right) projections to the $\left(R_{1},k_{1}\right)$ and $\left(\rho_{1},k_{1}\right)$ planes for \eqref{GeneralModelLGVP} and \eqref{CoevolutiveIntraReducedLGVP}, respectively. Critical manifold regions: attracting (grey), saddle (dashed green), repelling (blue dotted), slow adaptation nullcline (magenta) and response (red).  Lower row: order parameter evolution in the network (left) and mean-field (right). For the network, second (blue) and first (orange) population synchronization level and the filtered version of the latter (red). Adaptive law $\dot{k}_{1} = \varepsilon_{1}(-k_{1} + \gamma_{1} - \eta_{1}\rho_{1})$ and initial conditions $(\rho_{1},\psi,k_{1}) = (0.9, -0.5, 4.0)$. Parameters: $\Delta_{1}=1.0$, $\Delta_{2}=0.01$, $\omega_{1} = 101/20$, $\omega_{2} = \omega_{1}$, $\mu = 0.1$, $k_{2} = 4.0$, $\gamma_{1} = 2.5$, $\eta_{1} = 0.5$,  $\varepsilon_{1}=0.02$, \textcolor{black}{$\beta_{11}=\beta_{12}=\beta_{21}=\beta_{22}=0$}, and $N_{1,2}=500$.}
        \label{Fig:IntraFeedback}
    \end{figure*}
\end{subsection}

\begin{remark}
\label{Rem:PhaseLag}
    Despite the well-known importance of the phase-lag parameter in determining the stability of the chimera state \cite{Nakagawa1993, Nakagawa1994, Kuramoto2002, Bolotov2018}, the assumption $\beta_{\sigma \sigma'} = 0$ is done in order to simplify the analysis, as the derived results hold for $\beta_{\sigma \sigma'}$ greater than zero and sufficiently small. In this regard, following the results for the Ott-Antonsen ansatz applied to an homogeneous two-population network with the same intracouplings but different intercoupling between layers \cite{Abrams2008}, the resulting mean-field equations represent our reduced system in the limit $\beta_{\sigma \sigma'}\xrightarrow{}0$. The aforementioned fact is due to the geometric properties of the system and perturbation results, which is why it is possible to produce different synchronization patterns, including chimera states, even when in the presence of the phase-lag. For instance, in figure \ref{Fig:PhaseLag}, we present an example of a chimera state in a coevolutionary intercoupling scenario with a phase-lag $\beta_{11} = \beta_{12} = \beta_{21} = \beta_{22} = \pi/6$. \textcolor{black}{Similarly, in figure \ref{Fig:PhaseLagIntra}, we show a stable chimera state in the coevolutionary intracoupling case for which $\beta_{11} = \beta_{12} = \beta_{21} = \beta_{22} = \pi/10$.} Naturally, the trajectory follows a manifold $\mathcal{C}_{\varepsilon}$ sufficiently close to the critical manifold $\mathcal{C}_{0}^{\pm}$ in grey, until it reaches the crossing between $\mathcal{C}_{\varepsilon}$ and the slow adaptation nullcline. \textcolor{black}{Notice that the apparent shift between $\mathcal{C}_{\varepsilon}$ and $\mathcal{C}_{0}^{\pm}$ is due to the phase-lag considered but, as long as such parameter is sufficiently small, the normally hyperbolic structure is preserved.} Therefore, the behavior obtained is in the end a stable chimera state, as can be appreciated from the local order parameters of the aforementioned examples.
\end{remark}

\begin{figure*}[htbp]
        \begin{subfigure}[htbp]{0.49\textwidth}
            \includegraphics[width=1.0\linewidth]{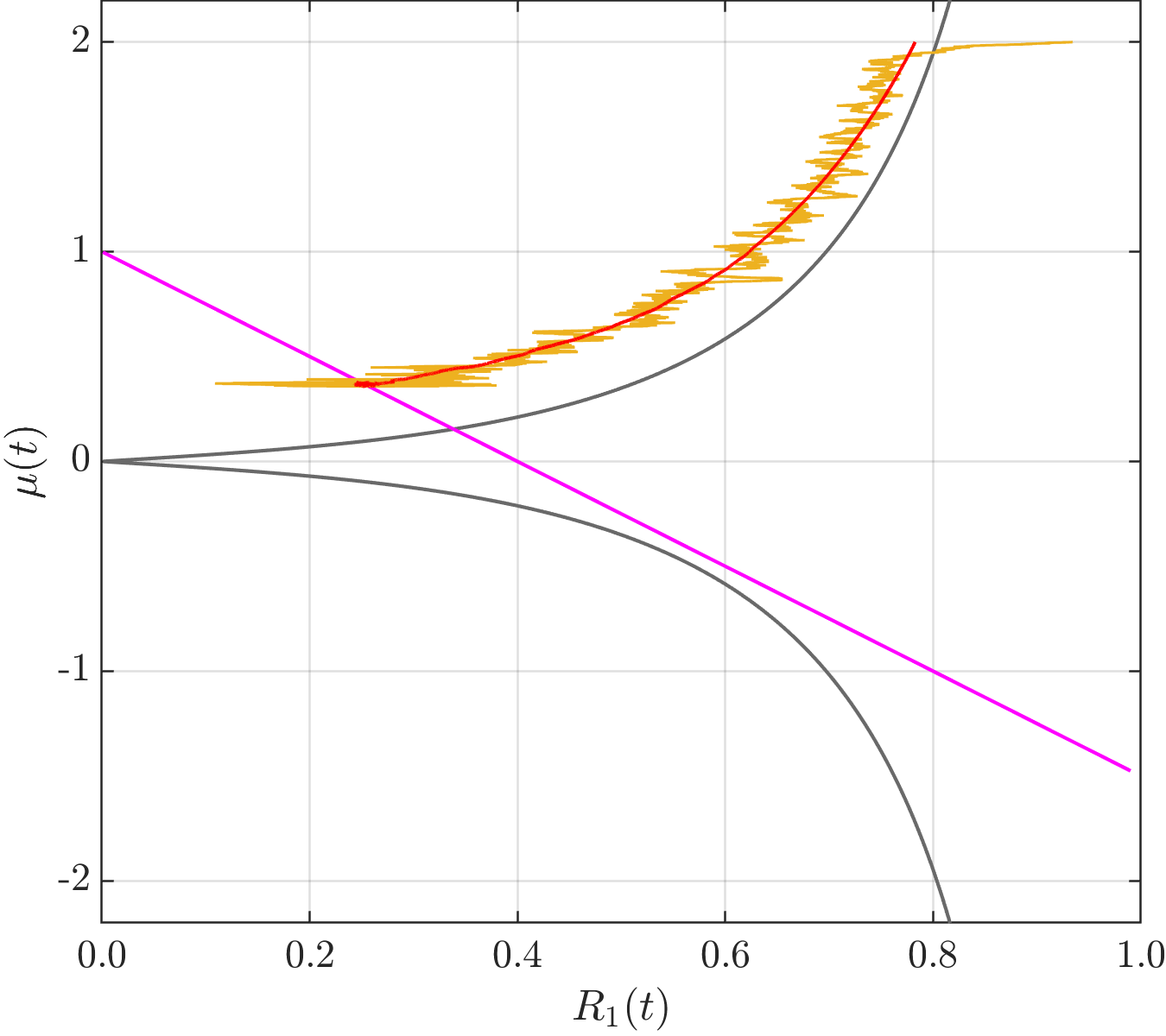}
        \end{subfigure}\hfill
        \begin{subfigure}[htbp]{0.5\textwidth}
            \includegraphics[width=1.0\linewidth]{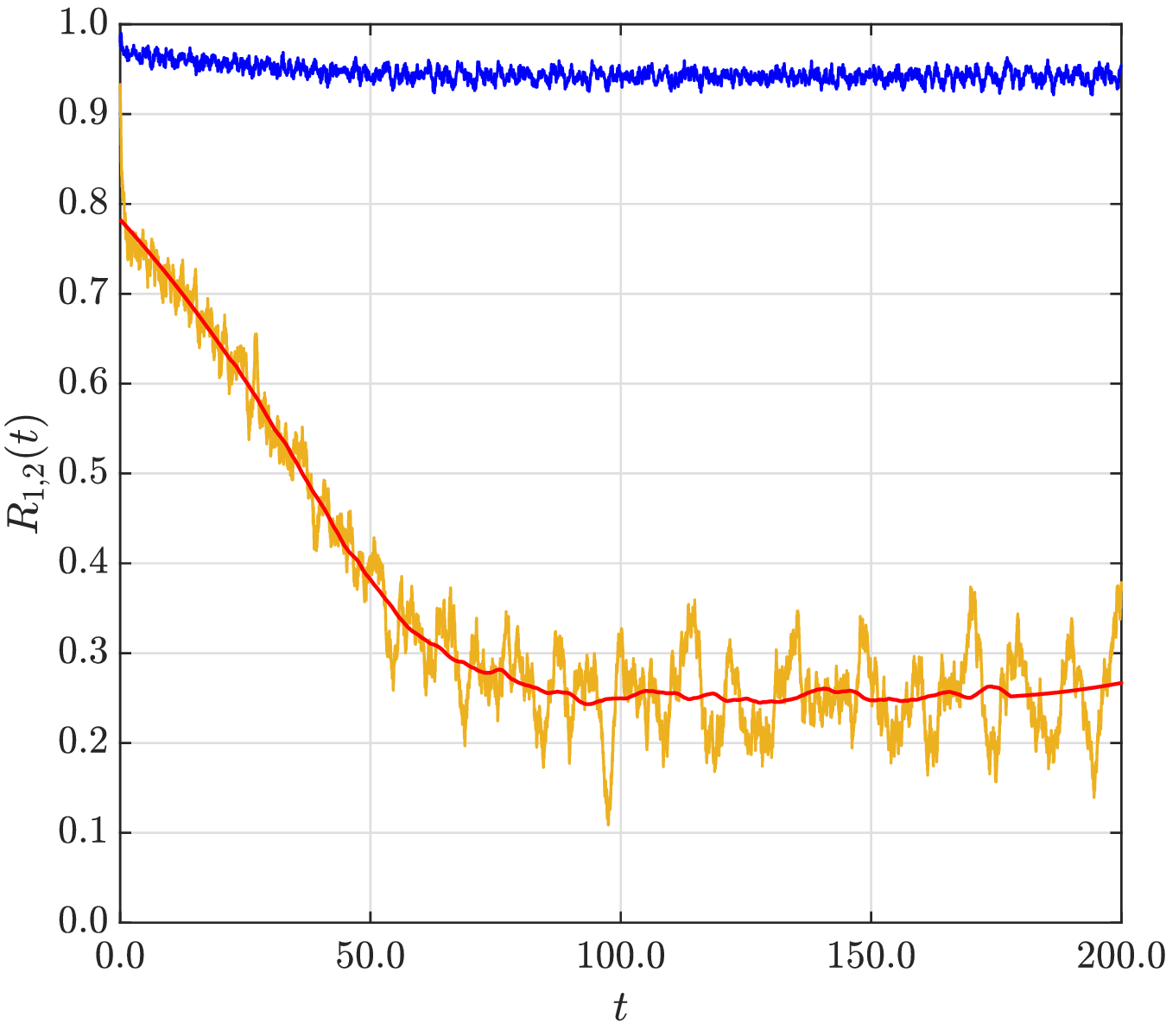}
        \end{subfigure}
        \caption{Generation of a stable chimera state on a network with phase lag \textcolor{black}{and a coevolutive intercoupling strength}. Projection to the $\left( R_{1},\mu \right)$ plane (left) and local order parameters (right) for the coevolutive intercoupling case, respectively. Attracting critical manifold (grey), slow adaptation nullcline (magenta), first population synchronization level (orange) and its filtered version (red), second population order parameter (blue). Adaptive law $\dot{\mu} = \varepsilon_{\mu}(-\mu + \gamma_{\mu} - \eta_{\mu}\rho_{1})$ and initial conditions $(\rho_{1},\psi,\mu) = (0.9, -0.5, 2.0)$. Parameters: $\Delta_{1}=0.6$, $\Delta_{2}=0.1$, $\omega_{1} = 101/20$, $\omega_{2} = \omega_{1} + 0.01$, $k_{1} = 0.9$, $k_{2}=2.0$, $\gamma_{\mu} = 1.0$, $\eta_{\mu} = 2.5$, $\varepsilon_{\mu}=0.02$, \textcolor{black}{$\beta_{11}=\beta_{12}=\beta_{21}=\beta_{22} = \pi/6$}, and $N_{1,2}=500$.}
        \label{Fig:PhaseLag}
    \end{figure*}

\begin{figure*}[htbp]
        \begin{subfigure}[htbp]{0.49\textwidth}
            \includegraphics[width=1.0\linewidth]{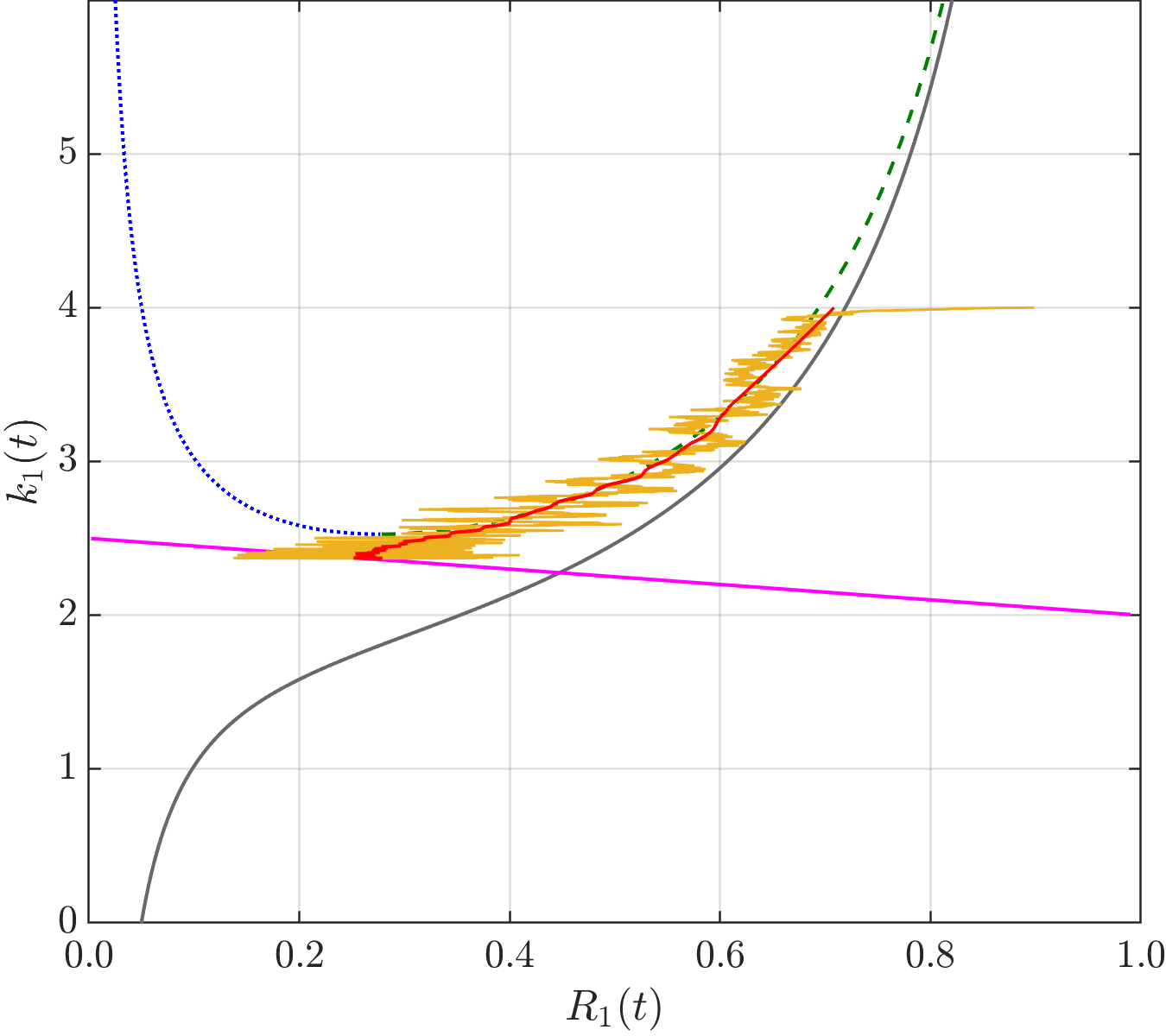}
        \end{subfigure}\hfill
        \begin{subfigure}[htbp]{0.5\textwidth}
            \includegraphics[width=1.0\linewidth]{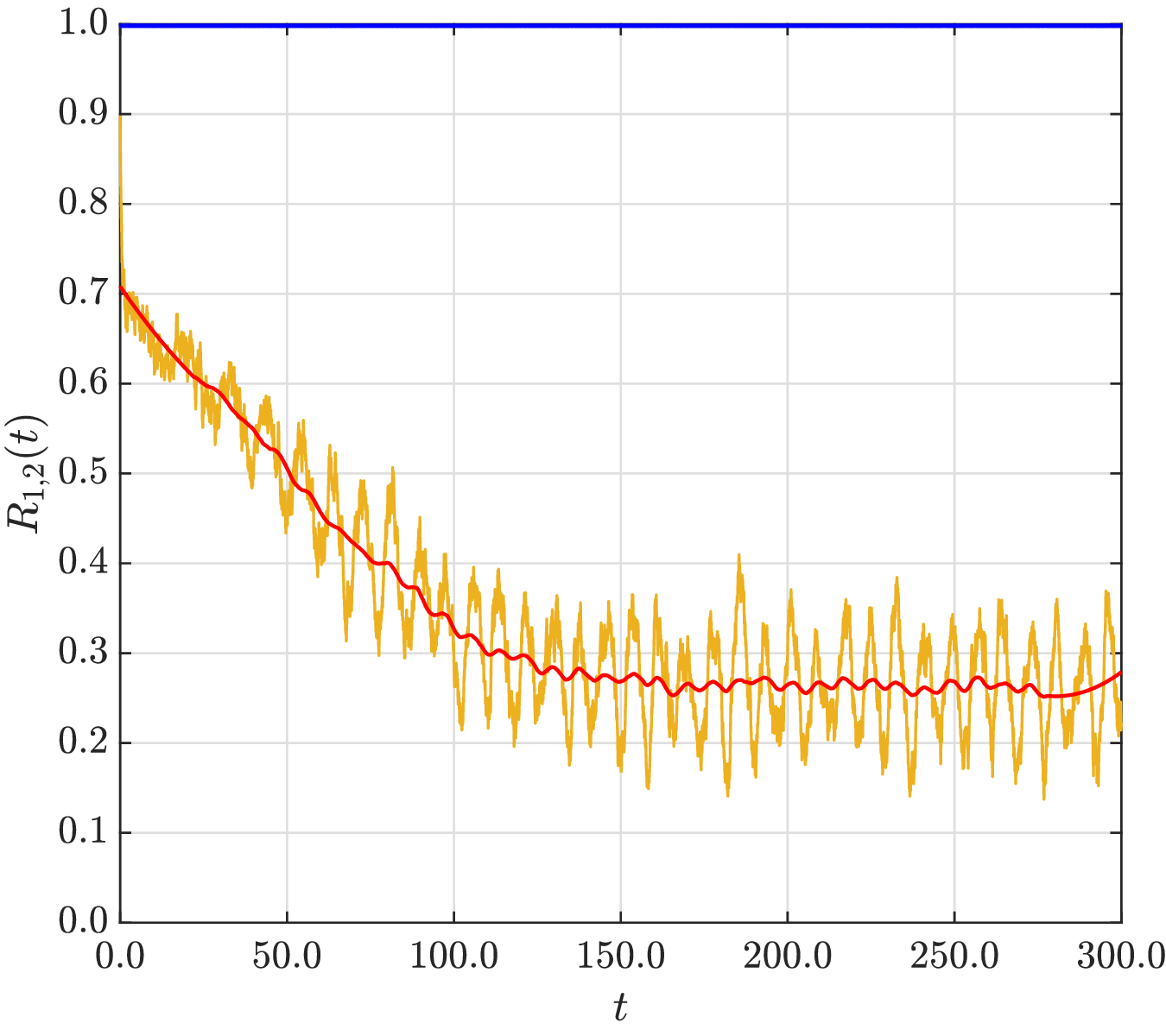}
        \end{subfigure}
        \caption{\textcolor{black}{Generation of a stable chimera state on a network with phase lag and a coevolutive intracoupling strength. Projection to the $\left( R_{1},k_{1} \right)$ plane (left) and local order parameters (right) for the coevolutive intracoupling case, respectively. Critical manifold regions: attracting (grey), saddle (dashed green), repelling (blue), slow adaptation nullcline (magenta), first population synchronization level (orange) and its filtered version (red), second population order parameter (blue). Adaptive law $\dot{k}_{1} = \varepsilon_{1}(-k_{1} + \gamma_{1} - \eta_{1}\rho_{1})$ and initial conditions $(\rho_{1},\psi,k_{1}) = (0.9, -0.5, 4.0)$. Parameters: $\Delta_{1}=1.0$, $\Delta_{2}=0.01$, $\omega_{1} = 101/20$, $\omega_{2} = \omega_{1}$, $\mu = 0.1$, $k_{2}=4.0$, $\gamma_{1} = 2.5$, $\eta_{1} = 0.5$, $\varepsilon_{1}=0.02$, $\beta_{11}=\beta_{12}=\beta_{21}=\beta_{22} = \pi/10$, and $N_{1,2}=1000$.}}
        \label{Fig:PhaseLagIntra}
    \end{figure*}

\begin{remark}
    It is important to emphasize that, although there is not an evident and spontaneous symmetry breaking, different synchronization patterns, including the chimera-like behaviors, can be generated for both the inter and intracoupling scenarios in the network \eqref{GeneralModelLGVP} and the mean-field \eqref{FullReducedSystemLGVP} by the inclusion of a proper macroscopic adaptive law with an adequate set of parameters. For instance, it is possible to produce a highly synchronized network even when the given parameters, without adaptation, would produce an incoherent system, as depicted in figure \ref{Fig:IntercouplingSync} for the coevolutionary intercoupling case. Therefore, the inclusion of the adaptation in a network remarkably expands the capability of the previously known results of such systems. Finally, we would like to highlight that setting the initial condition of the level of synchrony $R_{1}(t)$ in the network is a difficult task as it is determined by the average of the pseudo-randomly generated initial phases of the oscillators, which is why the initial conditions of $R_{1}(t)$ and $\rho_{1}(t)$ are slightly different in figure \ref{Fig:IntercouplingSync}.
\end{remark}

\begin{figure*}[htbp]
        \begin{subfigure}[htbp]{0.49\textwidth}
            \includegraphics[width=1.0\linewidth]{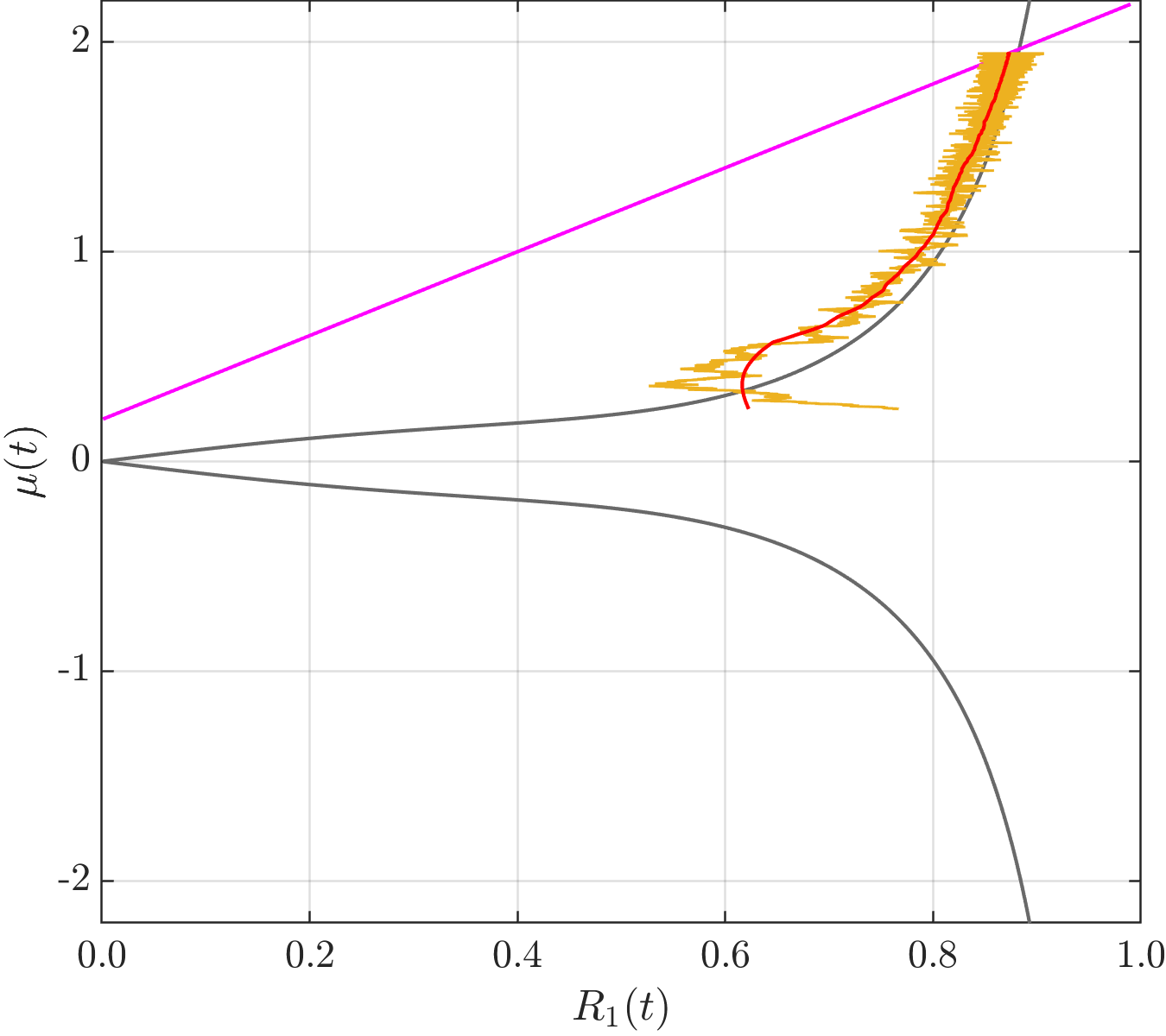}
        \end{subfigure}\hfill
        \begin{subfigure}[htbp]{0.5\textwidth}
            \includegraphics[width=1.0\linewidth]{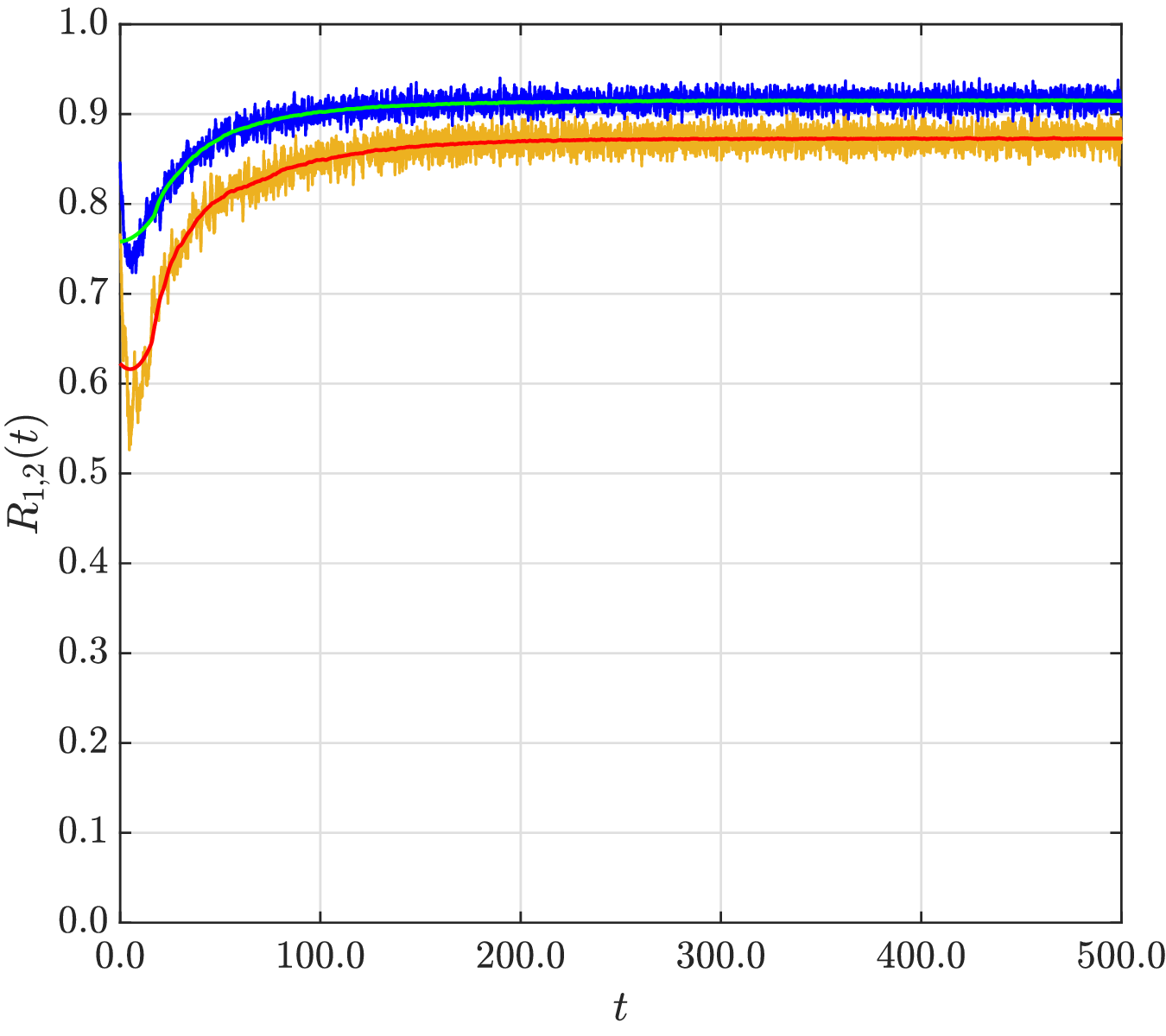}
        \end{subfigure}
        \caption{Synchronization of a partially incoherent system. Projection to the $\left( R_{1},\mu \right)$ plane and local order parameters $R_{1,2}(t)$ (right), respectively. Attracting critical manifold (grey), slow adaptation nullcline (magenta) and response (red). First and second population synchronization level (orange, blue) and their filtered version (red, green). Adaptive law $\dot{\mu} = \varepsilon_{\mu}(-\mu + \gamma_{\mu} - \eta_{\mu}\rho_{1})$ and initial conditions $(\rho_{1},\psi,\mu) = (0.4, 0.4, 0.25)$. Parameters: $\Delta_{1}=0.3$, $\Delta_{2}=0.2$, $\omega_{1} = 0.0$, $\omega_{2} = 0.3$, $k_{1} = 0.5$, $k_{2}=0.7$, $\gamma_{\mu} = 0.2$, $\eta_{\mu} = -2.0$, $\varepsilon_{\mu}=0.02$, \textcolor{black}{$\beta_{11}=\beta_{12}=\beta_{21}=\beta_{22}=0$}, and $N_{1,2}=500$. Notice that, since $k_1<2\Delta_{1}$, the observed partial synchronization is due to the adaptation, as otherwise the first population would tend to an incoherent state.}
        \label{Fig:IntercouplingSync}
    \end{figure*}

\section{Discussion and conclusions}
\label{Sec:Conclusions}
    The term \textit{chimera state} gather a broad collection of highly interesting unexpected synchronization patterns. In this work, we have shown a new mechanism for the generation of stable chimera states in a complete multilayer network of heterogeneous Kuramoto oscillators through the inclusion of slow coevolutive coupling strenghts. Particularly, we use the Ott-Antonsen ansatz to derive a mean-field representation of the general network, allowing the use of GSPT results to obtain a deeper insight of the dynamics occurring in the system. At first, we have analyzed a two-population network with only one coevolving intercoupling strength, and determined the necessary and sufficient conditions for the critical manifold to be normally hyperbolic and attracting. For such a scenario, we have effectively generated stable chimera states as well as persistent breathing chimera states only by adequately modifying the macroscopic adaptive law dictating the slow coevolving dynamics. Similarly, we studied the case of the same network with one coevolutive intracoupling strength and obtained relevant geometric results that enable the production of stable chimera states, as well as different synchronization patterns, by appropriately modifying the adaptation law employed. \textcolor{black}{Additionally, we have shown the effectiveness of our mechanism for the generation of diverse synchronization patterns in a network of coupled phase oscillators with coevolutive coupling strengths when there exists a phase-lag between the elements of the ensemble. The aforementioned fact holds as long as such parameter is sufficiently small, as it can be regarded as a perturbation in geometric terms for the given problem.} Therefore, the presented numeric results validate our mechanism for the generation of diverse patterns in a complete network of Kuramoto phase oscillators with coevolving macroscopic coupling strengths.
    \paragraph*{}Considering the results we have obtained, it is natural and intriguing to wonder about which kind of behaviours are observed in the non-normally hyperbolic scenario for both types of adaptive couplings. Although rigorous mathematical analysis is currently under development, numerical results have shown remarkable behaviors. \textcolor{black}{Firstly, it is possible to produce oscillating synchronization patterns without a periodically forced adaptive law. For instance, in figure \ref{Fig:BreathingTwoAdaptations} a stable breathing chimera is presented for a two-population system with two-coevolving coupling strengths, $\mu(t)$ and $k_{1}(t)$. As it can be seen, the periodic oscillation is sustained for the given set of parameters and adaptive laws employed. On the other hand, we suspect that canard cycles and breathing chimeras share a connection since they present certain qualitative resemblance. Following this idea, in figures \ref{Fig:InterCanardHopf} and \ref{Fig:TranscriticalCanard} two canard cycles for the two-population mean field with coevolving intercoupling scenario are shown. The technicalities and rigorous analysis are part of future work, but here we want to present some numerical evidence. In figure \ref{Fig:InterCanardHopf}, a canard raising from a singular Hopf bifurcation is depicted. For the selected parameters, the critical manifold folds at a Hopf bifurcation from where the trajectories follow the saddle branch of the critical manifold, giving rise to the depicted canard cycle. Similarly, in figure \ref{Fig:TranscriticalCanard}, a canard cycle with an oscillatory pattern due to a transcritical bifurcation is shown. Hence, the synchronization level in the first population oscillates periodically, which is why we suspect there exists a connection between canard cycles and breathing chimeras. Nevertheless, these dynamical behaviours are extremely sensitive to initial conditions and parameter variations, rendering highly complex the task of observing them in the network. For this reason, here we show only the results obtained for the mean-field and prefer to present the network results once the non-normally hyperbolic cases are better comprehended.} Moreover, we emphasize that the mean-field is only an approximation of the full system and some transient dynamics are not completely captured by this reduction. Thus, a closer correspondence between the network \eqref{GeneralModelLGVP} and the mean-field \eqref{CoevolutiveIntercouplingReducedLGVP} dynamics is achieved by increasing the population size with an adequate level of heterogeneity in the network, either by means of the natural frequency distributions or a phase-lag between the oscillators. 
    \begin{figure*}[htbp]
        \begin{subfigure}[htbp]{0.49\textwidth}
            \includegraphics[width=1.0\linewidth]{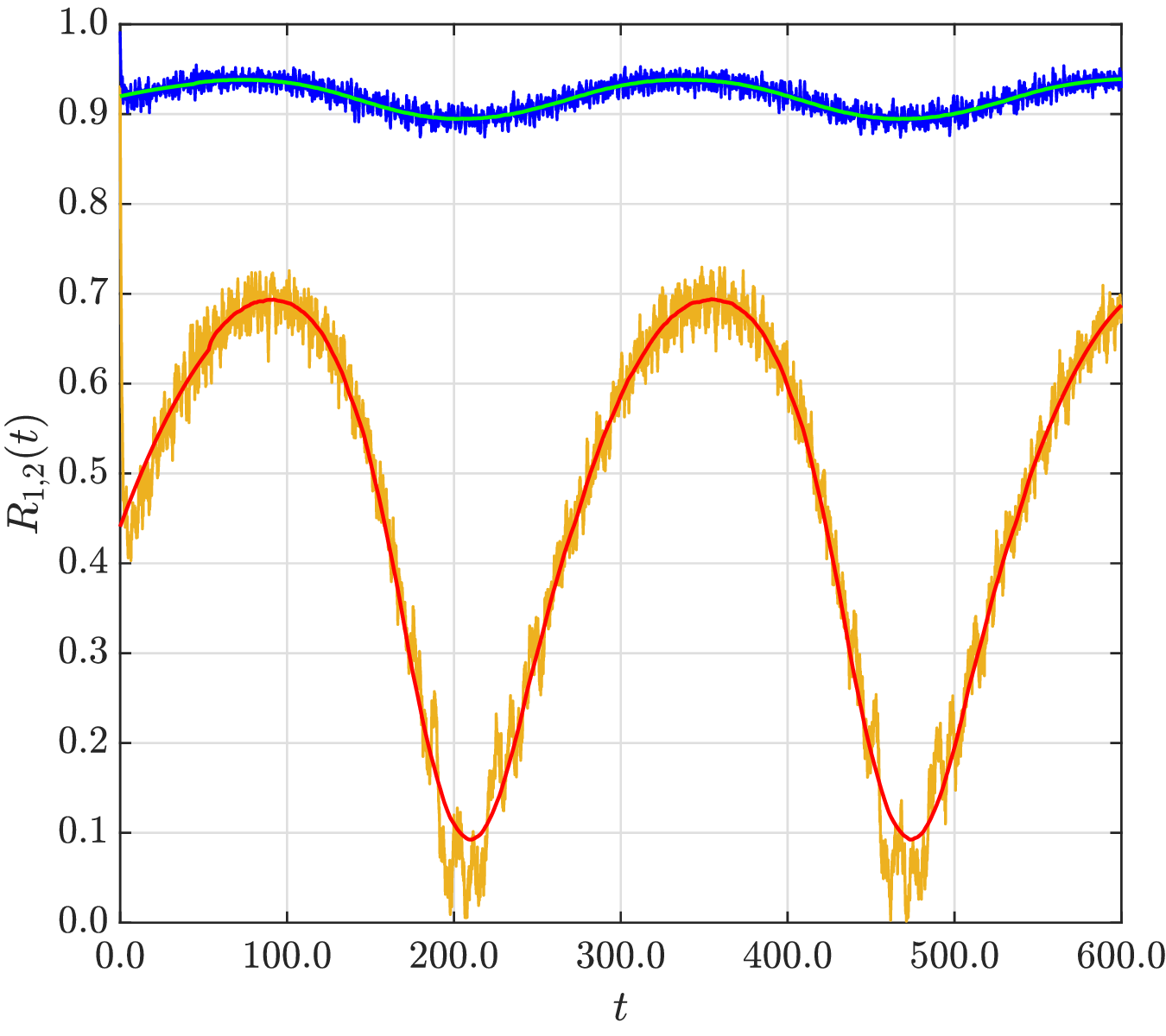}
        \end{subfigure}\hfill
        \begin{subfigure}[htbp]{0.5\textwidth}
            \includegraphics[width=1.0\linewidth]{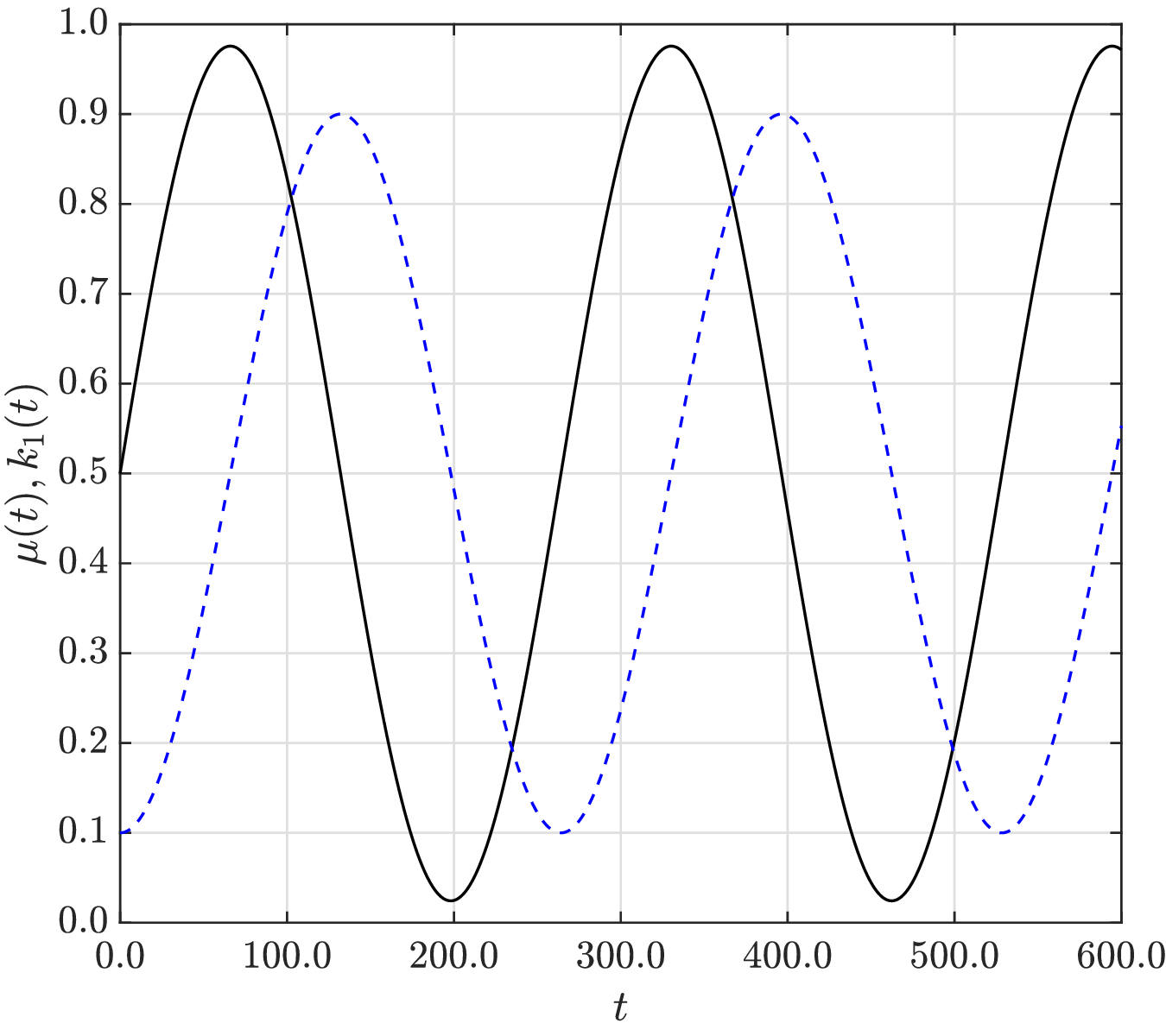}
        \end{subfigure}
        \caption{Stable breathing chimera state in a two-coevolutionary network. Local order parameters (left) and adaptive coupling strengths (right), respectively. First and second population synchronization level (orange, blue) and their filtered versions (red, green). Adaptive laws $\dot{\mu} = \varepsilon(-\sqrt{2}(k_{1} - 0.5))$ (black), $\dot{k}_{1} = \varepsilon(\mu - 0.5)$ (dashed blue), and initial conditions $(\rho_{1},\psi,\mu, k_{1}) = (0.9, 0.1, 0.5, 0.1)$. Parameters: $\Delta_{1}=0.5$, $\Delta_{2}=0.1$, $\omega_{1} = 101/20$, $\omega_{2} = \omega_{1}$, $k_{2}=1.0$, $\varepsilon=0.02$, \textcolor{black}{$\beta_{11}=\beta_{12}=\beta_{21}=\beta_{22}=0$}, and $N_{1,2}=500$. In this figure, even if  $k_1,\mu$ are both adaptive, thanks to the geometric understanding of the inter and intracoupling critical manifolds, we ensure that the corresponding critical manifold in this case is also normally hyperbolic. This geometric insight allows us to simply propose a periodic adaptation law that leads to this breathing chimera.}
        \label{Fig:BreathingTwoAdaptations}
    \end{figure*}

    \begin{figure*}[htbp]
        \begin{subfigure}[htbp]{0.49\textwidth}
            \includegraphics[width=1.0\linewidth]{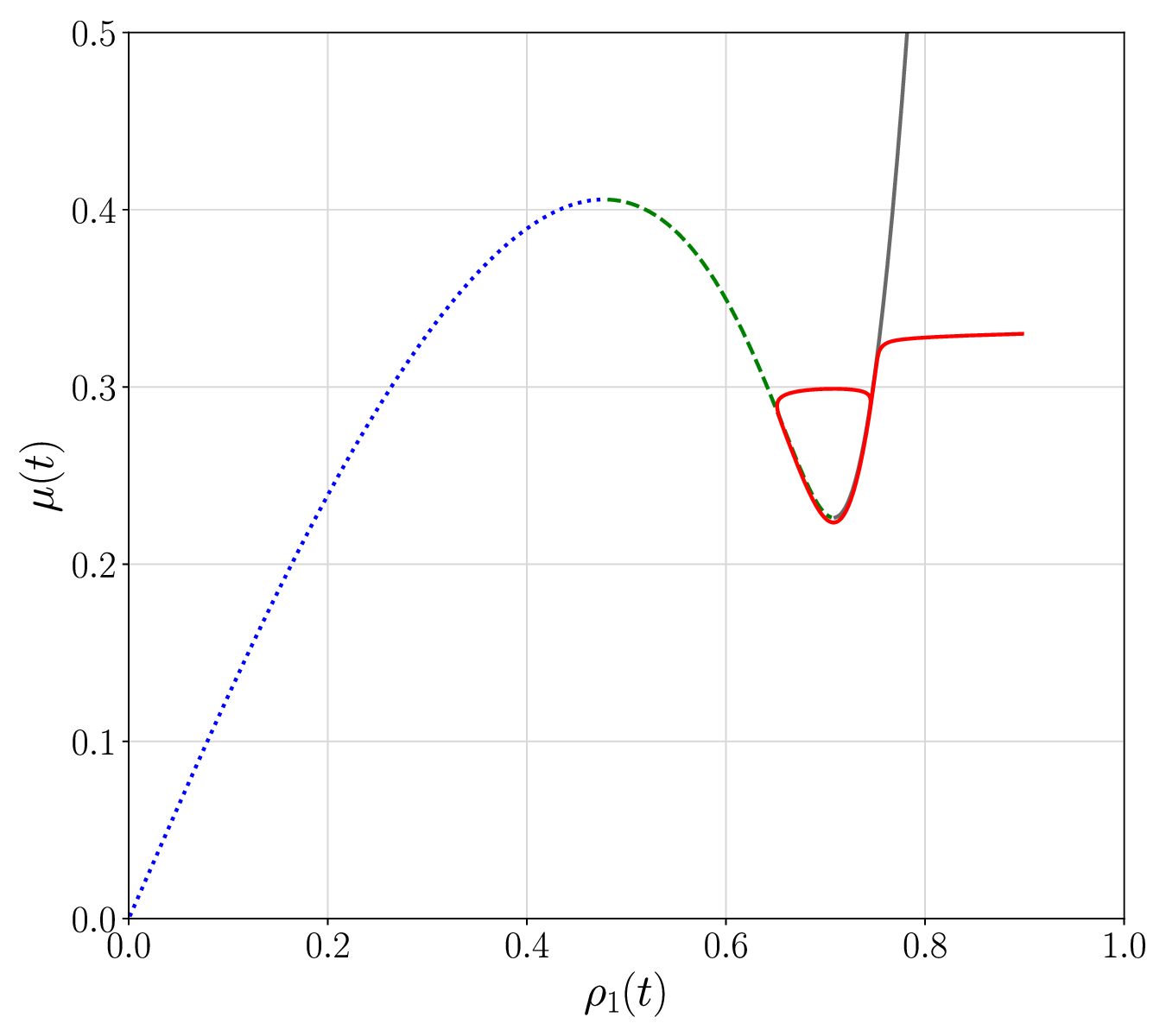}
        \end{subfigure}\hfill
        \begin{subfigure}[htbp]{0.5\textwidth}
            \includegraphics[width=1.0\linewidth]{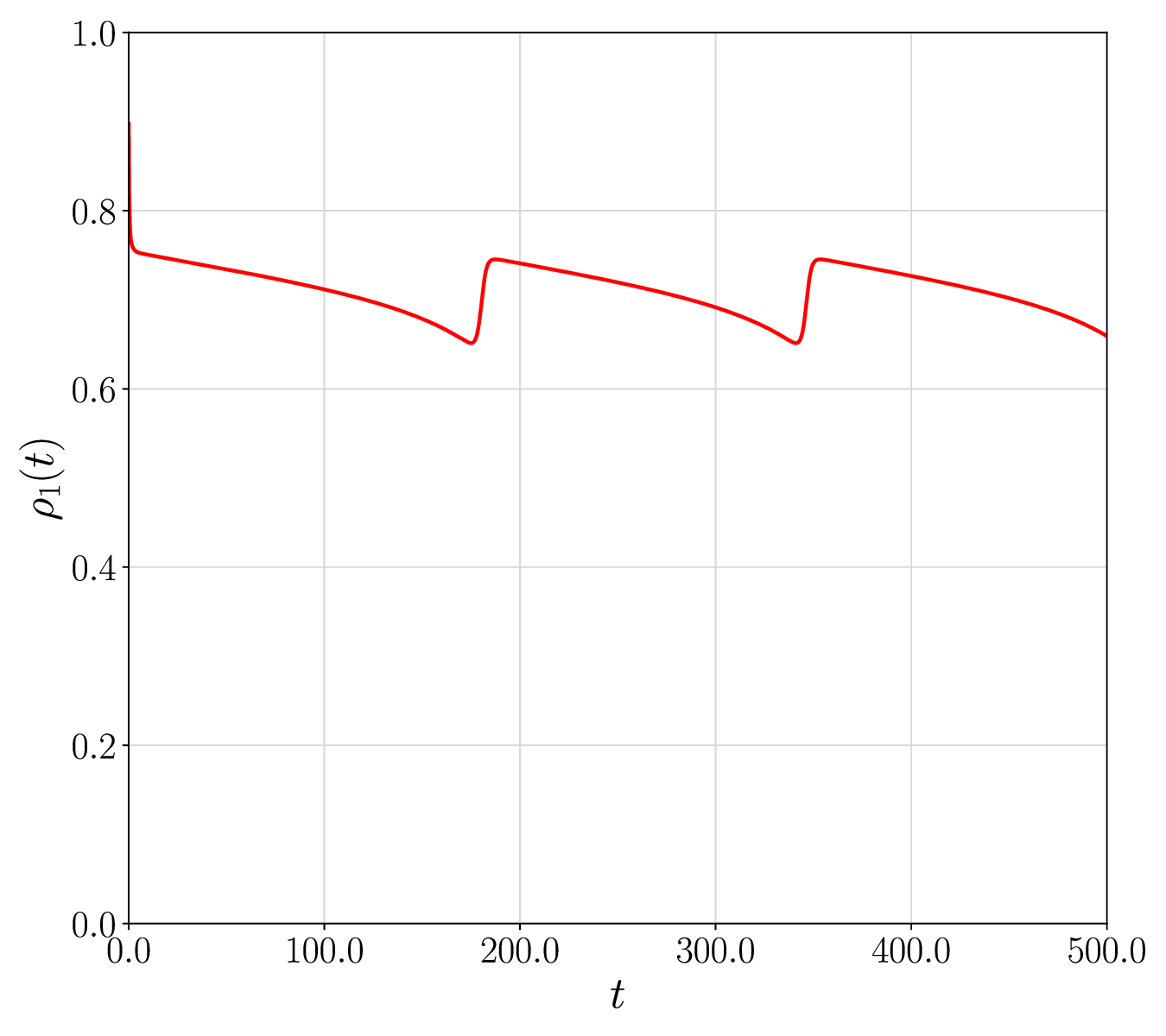}
        \end{subfigure}
        \caption{Canard cycle in a coevolutive intercoupling scenario. Projection to the $\left( \rho_{1},\mu \right)$ plane (left) and local order parameter $\rho_{1}(t)$ (right), respectively. Critical manifold regions: attracting (grey), saddle (dashed green), repelling (blue dotted), and first population synchronization level (red). Adaptive law $\dot{\mu} = \varepsilon_{\mu}(0.708065169053771149 - \rho_{1})$, and initial conditions $(\rho_{1},\psi,\mu) = (0.9, 0.1, 0.33)$. Parameters: $\Delta_{1}=0.5$, $\Delta_{2}=0.1$, $\omega_{1} = 0.0$, $\omega_{2} = 0.4$, $k_{1} = 2.0$, $k_{2}=3.0$, $\varepsilon_{\mu}=0.02$, and \textcolor{black}{$\beta_{11}=\beta_{12}=\beta_{21}=\beta_{22}=0$}.}
        \label{Fig:InterCanardHopf}
    \end{figure*}

    \begin{figure*}[htbp]
        \begin{subfigure}[htbp]{0.49\textwidth}
            \includegraphics[width=1.0\linewidth]{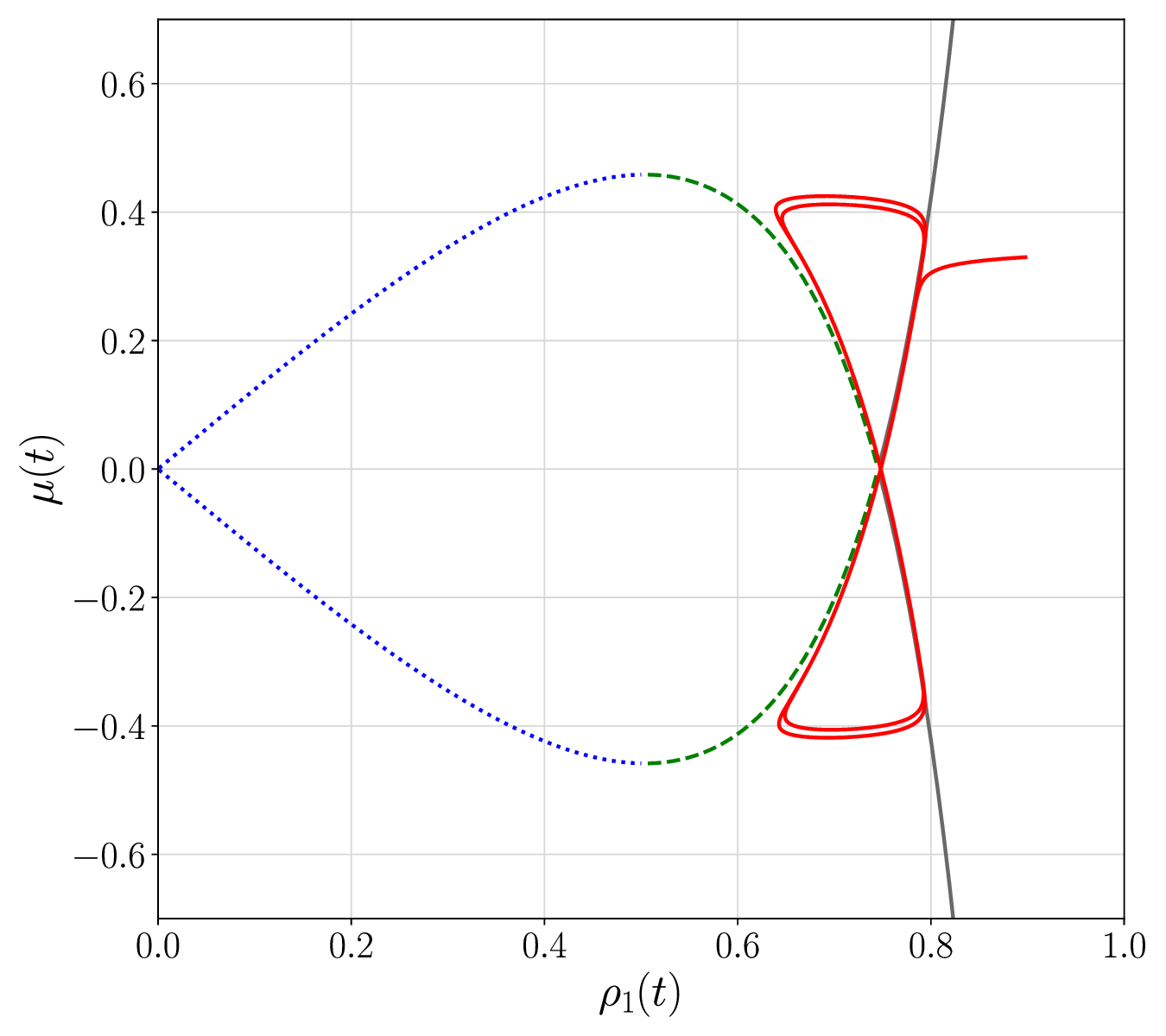}
        \end{subfigure}\hfill
        \begin{subfigure}[htbp]{0.5\textwidth}
            \includegraphics[width=1.0\linewidth]{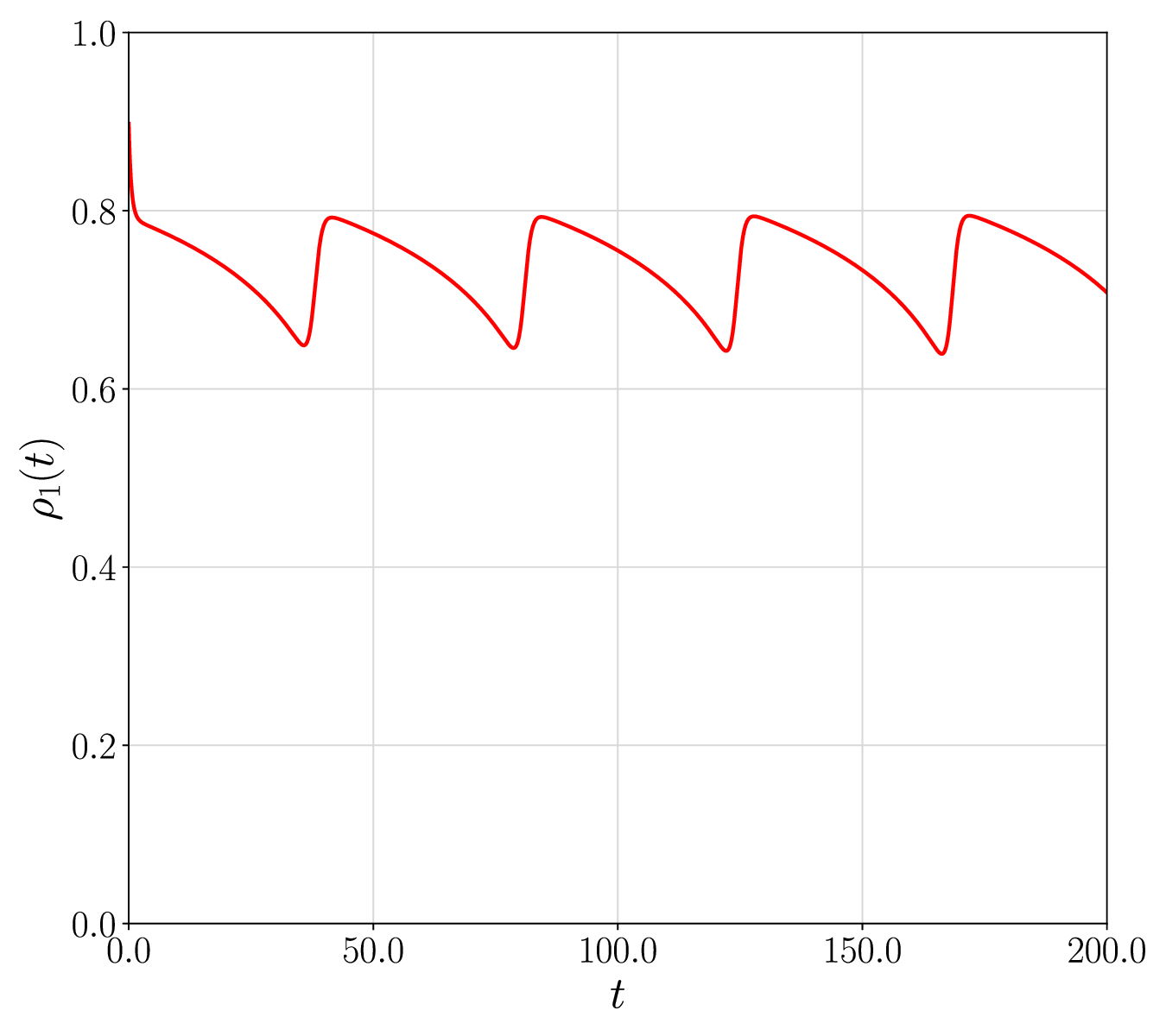}
        \end{subfigure}
        \caption{Transcritical canard cycle in the coevolutive intercoupling scenario. Projection to the $\left( \rho_{1},\mu \right)$ plane (left), and local order parameter $\rho_{1}(t)$ (right), respectively. Critical manifold regions: attracting (grey), saddle (dashed green), repelling (blue dotted), and first population synchronization level (red). Adaptive law $\dot{\mu} = -\varepsilon_{\mu}\cos{\psi}$, and initial conditions $(\rho_{1},\psi,\mu) = (0.9, 0.1, 0.33)$. Parameters: $\Delta_{1}=0.5$, $\Delta_{2}=0.1$, $\omega_{1} = 0.0$, $\omega_{2} = \omega_{1}$, $k_{1} = 2.25$, $k_{2}=3.0$, $\varepsilon_{\mu}=0.02$, and \textcolor{black}{$\beta_{11}=\beta_{12}=\beta_{21}=\beta_{22}=0$}.}
        \label{Fig:TranscriticalCanard}
    \end{figure*}

\appendix
\section{Preliminaries}
\label{Sec:Preliminaries}
In this section, we summarize the principal methods that are used through the development of our main results. First, we present the Ott-Antonsen ansatz which produces a mean-field equivalent system of the network under particular considerations. Additionally, we briefly introduce the fast-slow theory for singularly perturbed dynamics and the principal results here employed.

\subsection{Ott-Antonsen reduction}
In this work, we analyze a globally connected network formed by different types of heterogeneous Kuramoto oscillators with adaptive coupling strengths. We assume that the coevolutionary laws only depend on macroscopic quantities of the network, which allows us to treat the couplings as fixed parameters for the discussions in this section. Further details are given in section \ref{Sec:Model}. Therefore, we begin by studying an all-to-all network composed by different populations of heterogeneous Kuramoto phase oscillators, which can be expressed in the form
\textcolor{black}{
\begin{equation}
    \dot{\theta}_{i}^{\sigma} = \omega_{i}^{\sigma} + \sum_{\sigma' = 1}^{M} \frac{k_{\sigma \sigma'}}{N_{\sigma'}}\sum_{j=1}^{N_{\sigma'}} \sin{\left(\theta_{j}^{\sigma'}-\theta_{i}^{\sigma} - \beta_{\sigma \sigma'}\right)},
    \label{KuramotoMultilayer}
\end{equation}}
where $\theta_{i}^{\sigma}$ and $N_{\sigma}$ denote the state of the $i$-th oscillator and the number of elements in the $\sigma$-population, respectively. Additionally, $k_{\sigma \sigma'}$ and $\beta_{\sigma \sigma'}$ are the coupling strength and phase-lag between two layers, with $\sigma=1,2,\dots,M$. Moreover, $\omega_{i}^{\sigma}$ is the natural frequency of each oscillator in its respective community, selected from $M$ unimodal Cauchy-Lorentz distributions as
\begin{equation}
    g^{\sigma}(\omega_{\sigma}) = \frac{1}{\pi}\left[ \frac{\Delta_{\sigma}}{(\omega_{\sigma}-\hat{\omega}_{\sigma})^{2} + \Delta_{\sigma}^{2}} \right],
    \label{CauchyLorentzPDF}
\end{equation}
with $\hat{\omega}_{\sigma}$ and $\Delta_{\sigma}$ the central frequency and width parameter of each distribution. Notice that, the heterogeneity in the oscillators' natural frequency distributions itself can lead to the presence of chimera states. Therefore, in what follows we restrict our analysis to the case $\beta_{\sigma \sigma'}=0$. In fact, we will later see that, in our context, $\beta_{\sigma \sigma'}$ plays the role of a regular perturbation parameter for $\beta_{\sigma \sigma'}$ small. The case $\beta_{\sigma \sigma'}\approx\frac{\pi}{2}$ is relevant, but is left for future research because it leads to extra complications in the mean field analysis.

\paragraph*{}Considering the thermodynamic limit ($N_{\sigma}\rightarrow\infty$ \textcolor{black}{for each $\sigma$, but $\sigma$ finite}), the state of \eqref{KuramotoMultilayer} can be described by the continuous probability distribution functions $f^{\sigma}\left(\theta^{\sigma}, \omega_{\sigma}, t\right)$, which measure the fraction of oscillators with phases in the interval $[\theta^{\sigma}, \theta^{\sigma}+\textnormal{d}\theta^{\sigma}]$ and natural frequencies between $[\omega_{\sigma}, \omega_{\sigma}+\textnormal{d}\omega_{\sigma}]$ at time $t$. Moreover, since the number of oscillators is preserved for every population at all time, each density function satisfies the continuity equation
\begin{equation}
    \frac{\partial f^{\sigma}}{\partial t} + \frac{\partial}{\partial \theta^{\sigma}}\left( f^{\sigma}v^{\sigma} \right) = 0,
    \label{ContinuumEquationOttAntonsen}
\end{equation}
where $v^{\sigma}\left(\theta^{\sigma},\omega_{\sigma},  t\right)$ is the continuous version of the angular velocity of the oscillators in each population, expressed as

\begin{equation}
    v^{\sigma} = \omega_{\sigma} + \sum_{\sigma'=1}^{M} k_{\sigma \sigma'}\int_{0}^{2\pi}f\left( \theta^{\sigma'}, \omega_{\sigma'}, t \right)\sin{\left(\theta^{\sigma'} - \theta^{\sigma}\right)}\text{d}\theta^{\sigma'}.
    \label{Eq:AngularVelocity}
\end{equation}
Moreover, the well-known Kuramoto order parameter is defined as

\begin{equation}
    z_{\sigma}(t) = \rho_{\sigma}e^{\imath \phi_{\sigma}} = \int_{-\infty}^{\infty}\int_{0}^{2\pi}f\left( \theta^{\sigma}, \omega_{\sigma}, t \right)e^{\imath \theta^{\sigma}}\text{d}\theta^{\sigma}\text{d}\omega_{\sigma}.
    \label{Eq:KuramotoOrderParameter}
\end{equation}
In geometric terms, the complex order parameter describes the centroid of all the phasors $e^{\imath \theta^{\sigma}}$ and it grows larger as a higher synchronization level is reached in the network \cite{So2011}. Subsequently, it is possible to express the angular velocity \eqref{Eq:AngularVelocity} in terms of the order parameter \eqref{Eq:KuramotoOrderParameter}, resulting in
\textcolor{black}{
\begin{equation}
    v^{\sigma}\left( \theta^{\sigma}, \omega_{\sigma}, t \right) = \omega_{\sigma} + \frac{1}{2\imath}[ H_{\sigma \sigma'}(t)e^{-\imath \theta^{\sigma}} - \Bar{H}_{\sigma \sigma'}(t)e^{\imath \theta^{\sigma}} ],
    \label{Eq:AngularVelocityH}
\end{equation}}
where the over bar denotes the complex conjugate, and the functions $H_{\sigma \sigma'}(t)$ are any smooth, complex-valued $2\pi-$periodic functions of the phases $\theta_{i}^{\sigma}$, $i=1,\dots, N_{\sigma}$, \textcolor{black}{$\sigma = 1,2,\dots, M$} \cite{MarvelMirolloStrogatz2009}, and that in general may depend on the complex order parameter \eqref{Eq:KuramotoOrderParameter}. Moreover, as the exact time dependence of $H_{\sigma \sigma'}(t)$ does not affect the derivation of the Ott-Antonsen results, it suffices to consider $H_{\sigma \sigma'}(t)$ as some generic time dependent functions, regardless on how this dependence is determined \cite{OttAntonsen2009}. \textcolor{black}{In particular, for our problem consisting on a multilayer network of Kuramoto-type phase oscillators \eqref{GeneralModelLGVP} with coevolutive inter and intracouplings as \eqref{GenericAdaptiveLaw}, the functions $H_{\sigma \sigma'}(t)$ are defined as $H_{\sigma \sigma'}(t) = \sum_{\sigma'=1}^{M} k_{\sigma \sigma'} z_{\sigma'}(t)$, where the effective forcing depends on the population index $\sigma, \sigma' = 1,2$.} 
\paragraph*{}Following Ott and Antonsen \cite{OttAntonsen2008}, we focus our attention on a particular class of density functions $f^{\sigma}\left( \theta^{\sigma}, \omega_{\sigma}, t \right)$ in the form of Poisson kernels. In fact, Ott and Antonsen discovered that such kernels satisfy the governing equations if an associated low-dimensional system of ordinary differential equations is satisfied \cite{Abrams2008}. Thus, by considering Fourier series of the form
\begin{eqnarray}
    f^{\sigma}\left( \theta^{\sigma}, \omega_{\sigma}, t \right) = \frac{g^{\sigma}(\omega)}{2\pi}\Big[ 1 + \sum_{n=1}^{\infty}\alpha_{\sigma}^{n}(\omega_{\sigma}, t)e^{\imath n\theta^{\sigma}}
    + \sum_{n=1}^{\infty}\bar{\alpha}^{n}_{\sigma}(\omega_{\sigma}, t)e^{-\imath n\theta^{\sigma}}\Big],
    \label{OttAntonsenAnsatz}
\end{eqnarray}
for the density function of each population, and by imposing the ansatz \eqref{OttAntonsenAnsatz} to the continuum equation \eqref{ContinuumEquationOttAntonsen} we obtain the following set of differential complex equations
\textcolor{black}{
\begin{eqnarray}
    0=\sum_{n=1}^{\infty}n\alpha_{\sigma}^{n-1}e^{\imath n\theta^{\sigma}}\Big(\dot{\alpha}_{\sigma}+ \imath\omega_{\sigma}\alpha_{\sigma} + \frac{1}{2}\big[ \alpha_{\sigma}^{2}H_{\sigma \sigma'}(t) - \bar{H}_{\sigma \sigma'}(t)\big]\Big) + \textnormal{c.c.},
    \label{OttAntonsenAnsatzReduced}
\end{eqnarray}}\par
\noindent where c.c. stands for the complex conjugate of the expression in the right-hand side of \eqref{OttAntonsenAnsatzReduced}. Moreover, since the sum component in \eqref{OttAntonsenAnsatzReduced} is not zero, it means that the term within the parentheses vanish identically if the ansatz \eqref{OttAntonsenAnsatz} is a valid solution to the continuity equation \eqref{ContinuumEquationOttAntonsen}. Hence,
\textcolor{black}{
\begin{equation}
    \dot{\alpha}_{\sigma} + \imath\omega_{\sigma}\alpha_{\sigma} + \frac{1}{2}\left[ \alpha_{\sigma}^{2}H_{\sigma \sigma'}(t) - \Bar{H}_{\sigma \sigma'}(t) \right] = 0,
    \label{OttAntonsenMeanField}
\end{equation}}\par
\noindent with $z_{\sigma}(t) = \int_{-\infty}^{\infty}\alpha_{\sigma}(\omega_{\sigma},t)g^{\sigma}(\omega_{\sigma})\text{d}\omega_{\sigma}$. Furthermore, consider solutions of \eqref{OttAntonsenMeanField} with the initial conditions $\alpha_{\sigma}(\omega_{\sigma},0)$ that satisfy: (i) $|\alpha_{\sigma}(\omega_{\sigma},t)|\leq1$; (ii) $\alpha_{\sigma}(\omega_{\sigma},0)$ is analytically continuable into the lower half plane $\text{Im}(\omega_{\sigma})<0$; and (iii) $|\alpha_{\sigma}(\omega_{\sigma},t)|\rightarrow 0$ as $\text{Im}(\omega_{\sigma})\rightarrow-\infty$. If such conditions are satisfied for $\alpha_{\sigma}(\omega_{\sigma}, 0)$, then they will continue to be satisfied for $\alpha_{\sigma}(\omega_{\sigma}, t)$ \cite{OttAntonsen2008, Martens2009}. It is important to mention that the Ott-Antonsen ansatz corresponds to a particular case of the technique developed by Watanabe-Strogatz \cite{Watanabe1993, Watanabe1994} which considers all the harmonics of the frequency distribution function \eqref{OttAntonsenAnsatz}. Thus, although the resulting response in the Ott-Antonsen manifold does not entirely capture the transient behavior of the synchronization level in the network, the stationary states of both systems are effectively the same \cite{Pikovsky2011}. Furthermore, the Ott-Antonsen manifold is known to be globally attracting when the distribution of natural frequencies is non-homogeneous, i.e., $\Delta_{\sigma}>0$ \cite{OttAntonsen2009, OttAntonsen2011}. Therefore, every long time behavior of the order parameter can be obtained by analyzing the corresponding mean-field even when weak heterogeneities are considered \cite{Lee2021}.
\paragraph*{}By selecting the probability distribution functions as \eqref{CauchyLorentzPDF}, it is possible to calculate $\bar{z}_{\sigma}$ by the contour integration in the negative half of the complex plane yielding $\bar{z}_{\sigma} = \alpha_{\sigma}\left(\omega_{\sigma} - \imath\Delta_{\sigma},t \right)$ \cite{Jalan2022}. Substituting this expression in \eqref{ContinuumEquationOttAntonsen} produces $M$-coupled complex ODEs describing the evolution of the complex order parameter as
\begin{equation*}
    \dot{z}_{\sigma} + \left( \Delta_{\sigma} - \imath\omega_{\sigma} \right)z_{\sigma} + \frac{1}{2}\sum_{\sigma' = 1}^{M} k_{\sigma \sigma'}\left( \bar{z}_{\sigma'}z_{\sigma}^{2} - z_{\sigma'} \right) = 0.
    \label{ComplexMeanField}
\end{equation*}
Finally, with $z_{\sigma} = \rho_{\sigma}e^{-\imath\phi_{\sigma}}$, for which the negative sign is included in the definition of $\phi_{\sigma}$ in order for the Poisson kernel to converge to $\delta(\theta-\phi_{\sigma})$ and not to $\delta(\theta+\phi_{\sigma})$ as $\rho_{\sigma}\rightarrow1$ \cite{Abrams2008}, produces by orthogonality $2M$ coupled real ODEs in the form
\begin{align}
    \begin{split}
        \dot{\rho}_{\sigma} &= -\Delta_{\sigma}\rho_{\sigma} + \frac{1}{2}\left( 1-\rho_{\sigma}^{2} \right)\sum_{\sigma'=1}^{M}k_{\sigma \sigma'}\rho_{\sigma'}\cos{\left( \phi_{\sigma'} - \phi_{\sigma} \right)},\\
        \dot{\phi}_{\sigma} &= -\omega_{\sigma} + \frac{1}{2}\frac{\rho_{\sigma}^{2}+1}{\rho_{\sigma}}\sum_{\sigma'=1}^{M}k_{\sigma \sigma'}\rho_{\sigma'}\sin{\left( \phi_{\sigma'}-\phi_{\sigma} \right)}.
    \end{split}
    \label{OttAntonsenRealMeanField}
\end{align}
In particular, our mechanism for the generation of stable chimera state relies on geometric properties of \eqref{OttAntonsenRealMeanField} with adaptive coupling strengths under a macroscopic slow adaptation, which induces a time-scale separation between the dynamics on and off the network. Thus, the next subsection briefly summarizes the principal concepts of fast-slow systems employed later on.

\subsection{Fast-slow dynamics}
    A fast-slow system is a singularly perturbed ordinary differential equation in the form
    \begin{align}
    \begin{split}
        \varepsilon\dot{x} &= f(x,y,\varepsilon),\\
        \enspace \dot{y} &= g(x,y,\varepsilon),
        \label{SlowTimeForm}
    \end{split}
    \end{align}
    where the over-dot represents the derivative with respect to the slow-time $\tau$, $x\in\mathbb{R}^{m}$ and $y\in\mathbb{R}^{n}$ denote the fast and slow variables respectively, and $0<\varepsilon\ll1$ describes the time-scale separation. Moreover, we consider $f:\mathbb{R}^{m}\times\mathbb{R}^{n}\times\mathbb{R}\rightarrow \mathbb{R}^{m}$ and $g:\mathbb{R}^{m}\times\mathbb{R}^{n}\times\mathbb{R}\rightarrow \mathbb{R}^{n}$ of class $C^{k}$, for $k$ sufficiently large. Equivalently, by defining the fast time $t\coloneqq\tau/\varepsilon$, we can rewrite \eqref{SlowTimeForm} as
    \begin{align}
    \begin{split}
        x' &= f(x,y,\varepsilon),\\
        y' &= \varepsilon g(x,y,\varepsilon).
        \label{FastTimeForm}
    \end{split}
    \end{align}
    where the prime denotes the derivative with respect to the fast time $t$. Hence, \eqref{SlowTimeForm} and \eqref{FastTimeForm} are known as the slow and fast formulations, and in the singular limit ($\varepsilon=0$) two different problems arise, namely the reduced and the layer problem, respectively from \eqref{SlowTimeForm} and \eqref{FastTimeForm}. Despite of being non-equivalent both problems share a close connection as expressed in the following definition.
\begin{definition}[Critical manifold]
\label{CriticalManifoldDefinition}
    The critical set is defined as
    \begin{equation}
    \label{CriticalManifoldDef}
         \mathcal{C}_{0} = \left\{(x,y) \in \mathbb{R}^{m}\times \mathbb{R}^{n}: f(x,y,0)=0 \right\}.
    \end{equation}
\end{definition}
Moreover, if $\mathcal{C}_{0}$ is a submanifold of $\mathbb{R}^{m}\times\mathbb{R}^{n}$, it is referred to as the \emph{critical manifold}. Notice that points of \eqref{CriticalManifoldDef} are in a direct correspondence to the equilibria set of the fast flow, generated by \eqref{FastTimeForm}. Additionally, a relevant property that $\mathcal{C}_{0}$ can have is normal hyperbolicity, given as follows.
\begin{definition}[Normal hyperbolicity]
    \label{DefinitionNormalHyperbolicity}
    A subset $\mathcal{S}\subset\mathcal{C}_{0}$ is normally hyperbolic if the matrix $\left( \textnormal{D}_{x}f \right)(p,0)$ has no eigenvalues with zero real part for all $p\in\mathcal{S}$. Moreover, a normally hyperbolic subset is  attracting (repelling) if all the eigenvalues of $\left( \textnormal{D}_{x}f \right)(p,0)$ have negative (positive) real part for every $p\in\mathcal{S}$. Finally, if $\mathcal{S}$ is normally hyperbolic but neither attracting nor repelling it is of the saddle type.
\end{definition}

In contrast, $\mathcal{S}$ is non-hyperbolic if the corresponding matrix $\left( \textnormal{D}_{x}f \right)(p,0)$ has at least one eigenvalue with zero real part for any $p\in\mathcal{S}$. Depending on the normal hyperbolicity condition of \eqref{CriticalManifoldDef} different analysis techniques can be employed. For instance, non-hyperbolic points, related to dynamic features such as relaxation oscillations and canards \cite{Desroches2016}, can be studied through the blow-up method \cite{Jardon2021}. On the other hand, for normally hyperbolic cases we have Fenichel's Theorem \cite{Fenichel1979, Jones1995}, stated as follows. 
    \begin{theorem}[Fenichel's Theorem]
        \label{FenichelTheorem}
        Given a compact normally hyperbolic submanifold (possibly with boundary) $\mathcal{S}=\mathcal{S}_{0}$ of the critical manifold $\mathcal{C}_{0}$ of \eqref{SlowTimeForm}, and $f,g \in C^{k<\infty}$, for $0<\varepsilon\ll1$, then:
        \begin{enumerate}
            \item A locally invariant manifold $\mathcal{S}_{\varepsilon}$ diffeomorphic to $\mathcal{S}_{0}$ exists. Local invariance means that trajectories can enter or escape $\mathcal{S}_{\varepsilon}$ only through its boundaries.
            \item $\mathcal{S}_{\varepsilon}$ has Hausdorff distance $\mathrm{d_{H}}\left( V,W \right) = O\left(\varepsilon\right)$ from $\mathcal{S}_{0}$, as $\varepsilon \rightarrow 0$.
            \item The flow on $\mathcal{S}_{\varepsilon}$ converges to the flow generated by the reduced problem, known as the slow flow, as $\varepsilon\rightarrow0$.
            \item $\mathcal{S}_{\varepsilon}$ is $C^{k}$-smooth.
            \item $\mathcal{S}_{\varepsilon}$ is normally hyperbolic and has the same stability properties with respect to the fast variables as $\mathcal{S}_{0}$.
            \item $\mathcal{S}_{\varepsilon}$ is usually not unique. In regions with a fixed distance from $\partial \mathcal{S}_{\varepsilon}$, all manifolds which satisfies conditions 1-5 above are exponentially close to each other with a Hausdorff distance of order $O\left(\textnormal{exp}(-C/\varepsilon)\right)$ for some $C>0$, $C\in O(1)$, as $\varepsilon\rightarrow0$. 
        \end{enumerate}
        Thus, any manifold $\mathcal{S}_{\varepsilon}$ satisfying  conditions 1-5 is called a slow manifold.
    \end{theorem}
    \begin{remark}
        Regularly, a particular manifold which satisfies conditions $1-5$ of theorem \ref{FenichelTheorem} is referred as \emph{the} slow manifold. Rigorously, this is not correct since $\mathcal{S}_{\varepsilon}$ is not unique, but since all possible choices are exponentially close, it is arbitrary to select one of these manifolds for most analytical and numerical considerations. Moreover, normally hyperbolic submanifolds of the slow problem, such as equilibria and periodic orbits, persist in every slow manifold  \cite{Kuehn2015}.
    \end{remark}
    In the past sometimes Fenichel's Theorem was referred to as \textit{Geometric Singular Perturbation Theory} (GSPT). Nevertheless, nowadays GSPT includes several geometric techniques such as fast-slow normal form theory and the blow-up method, to name a few.

\section*{References}

\printbibliography[heading=none]

\end{document}